\documentclass[12pt]{article}

\usepackage{amscd}
\usepackage{bm}
\textwidth=210truemm\relax
\advance \textwidth -60truemm\relax

\oddsidemargin 30truemm\relax
\advance\oddsidemargin -1truein\relax

\evensidemargin=\oddsidemargin

\textheight=297truemm\relax
\advance\textheight -70truemm\relax
\topmargin 30truemm\relax
\advance\topmargin -1truein\relax
\headheight 0pt
\headsep 0pt

\unitlength\textwidth
\divide\unitlength by 150\relax

\usepackage{amsmath,amssymb}
\usepackage{mathptmx}
\usepackage{bm}
\bmdefine{\aaa}{a}
\bmdefine{\bbb}{b}
\bmdefine{\ccc}{c}
\bmdefine{\ddd}{d}
\bmdefine{\eee}{e}
\bmdefine{\sss}{s}
\bmdefine{\uuu}{u}
\bmdefine{\vvv}{v}
\bmdefine{\www}{w}
\bmdefine{\xxx}{x}
\bmdefine{\yyy}{y}
\bmdefine{\zzz}{z}
\bmdefine{\zerovec}{0}

\newcommand{\CCC}{\mathbb{C}}
\newcommand{\RRR}{\mathbb{R}}

\newcommand{\XXX}{\mathbb{X}}

\newcommand{\diag}{{\mathrm{Diag}}}

\newcommand{\qed}{\nolinebreak\rule{.3em}{.6em}}
\newcommand{\rank}{\mathrm{rank}}
\newcommand{\mtrank}{\mathrm{mtrank}\,}
\newcommand{\grank}{\mathrm{grank}\,}
\newcommand{\GL}{{\mathrm{GL}}}

\newcommand{\typicalrankR}{{\mathrm{typical\_rank_\RRR}}}

\numberwithin{equation}{section}
\newtheorem{thm}[equation]{Theorem}

\newtheorem{example}[equation]{Example}
\newtheorem{lemma}[equation]{Lemma}
\newtheorem{cor}[equation]{Corollary}
\newtheorem{definition}[equation]{Definition}
\newtheorem{prop}[equation]{Proposition}

\begin{document}

\title{Typical rank of $m\times n\times (m-1)n$ tensors with
$3\leq m\leq n$ over the real number field}
\author{Toshio Sumi, Mitsuhiro Miyazaki, and Toshio Sakata}

\maketitle

\begin{abstract}
Tensor type data are used 
recently in various application fields, 
and then a typical rank is important.  
Let $3\leq m\leq n$.
We study typical ranks of $m\times n\times (m-1)n$ tensors
over the real number field.
Let $\rho$ be the Hurwitz-Radon function defined as
$\rho(n)=2^b+8c$ for nonnegative integers $a,b,c$ 
such that $n=(2a+1)2^{b+4c}$ and $0\leq b<4$.
If $m \leq \rho(n)$, then 
the set of $m\times n\times (m-1)n$ tensors has two
typical ranks $(m-1)n,(m-1)n+1$.
In this paper, we show that the converse is also true:
if $m > \rho(n)$, then 
the set of $m\times n\times (m-1)n$ tensors has only one
typical rank $(m-1)n$.
\end{abstract}

\section{Introduction}

An analysis of high dimensional arrays is getting frequently used.
Kolda and Bader \cite{Kolda-Bader:2009} introduced many applications 
of tensor decomposition analysis 
in various fields such as signal processing, computer vision, data mining,
and others.

In this paper we concentrate to discuss $3$-way arrays.
A $3$-way array 
$$(a_{ijk})_{1\leq i\leq m,\ 1\leq j\leq n,\ 1\leq k\leq p}$$
with size $(m,n,p)$ is called an $m\times n\times p$
tensor.
A rank of a tensor $T$, denoted by $\rank\, T$, 
is defined as the minimal number of
rank one tensors which describe $T$ as a sum.
The rank depends on the base field.  For example there is
a $2\times 2\times 2$ tensor over the real number field whose rank
is $3$ but is $2$ as a tensor over the complex number field.
\par

Throughout this paper, we assume that the base field is the real number field $\RRR$.
Let $\mathbb{R}^{m\times n\times p}$ be the set of $m\times n\times p$ tensors with Euclidean topology.
A number $r$ is a typical rank of $m\times n\times p$ tensors
if the set of tensors with rank $r$ contains 
a nonempty open semi-algebraic set
of $\mathbb{R}^{m\times n\times p}$
 (see Theorem~\ref{thm:Friedland}).
We denote by $\typicalrankR(m,n,p)$ the set of typical ranks of
$\mathbb{R}^{m\times n\times p}$.
If $s$ (resp. $t$) is the minimal (resp. maximal) number of 
$\typicalrankR(m,n,p)$, then 
$$\typicalrankR(m,n,p)=[s,t],$$
the interval of all integers between $s$ and $t$, including both, and $s$ is equal to the generic rank
of the set of $m\times n\times p$ tensors over the complex number
field \cite{Friedland:2008}.
In the case where $m=2$, the set of typical ranks of $2\times n\times p$ tensor
is well-known \cite{tenBerge-etal:1999}:
$$\typicalrankR(2,n,p)=\begin{cases}
\{p\}, & n<p\leq 2n \\
\{2n\}, & 2n<p \\
\{p,p+1\}, & n=p\geq 2
\end{cases}
$$
Suppose that $3\leq m\leq n$.
If $p>(m-1)n$ then the set of typical ranks of $m\times n\times p$ tensors
is just $\{\min(p,mn)\}$.
If $p=(m-1)n$ then
the set of typical ranks of $m\times n\times p$ tensor 
is $\{p\}$ or $\{p,p+1\}$ \cite{tenBerge:2000}.
Until our paper \cite{Sumi-etal:2010a}, only a few cases where
$\typicalrankR(m,n,(m-1)n)=\{(m-1)n,(m-1)n+1\}$ \cite{Comon-etal:2009,Friedland:2008} are known and
we constructed infinitely many examples 
by using the concept of absolutely nonsingular tensors in \cite{Sumi-etal:2010a}:
If $m\leq \rho(n)$ then $\typicalrankR(m,n,p)=\{p,p+1\}$,
where $\rho(n)$ is the Hurwitz-Radon number given by
$\rho(n)=2^b+8c$ for nonnegative integers $a,b,c$ 
such that $n=(2a+1)2^{b+4c}$ and $0\leq b<4$.

The purpose of this paper is to completely determine the set of 
typical ranks of $m\times n\times (m-1)n$ tensors:

\begin{thm} \label{thm:main}
Let $3\leq m\leq n$ and $p=(m-1)n$.
Then it holds
$$\typicalrankR(m,n,p)=\begin{cases}
\{p\}, & m>\rho(n) \\
\{p,p+1\}, & m\leq \rho(n).
\end{cases}$$
\end{thm}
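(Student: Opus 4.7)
The implication ``$m \leq \rho(n) \Rightarrow \typicalrankR(m,n,p) = \{p, p+1\}$'' has been established in \cite{Sumi-etal:2010a} via the construction of absolutely nonsingular tensors. Since the generic complex rank of $m \times n \times (m-1)n$ tensors is $p = (m-1)n$, this is always the minimal element of $\typicalrankR(m,n,p)$; combined with the theorem of ten Berge \cite{tenBerge:2000} that $\typicalrankR(m,n,(m-1)n) \in \{\{p\},\{p,p+1\}\}$, the content of the theorem reduces to the converse:
\begin{equation*}
m > \rho(n) \;\Longrightarrow\; \typicalrankR(m,n,(m-1)n) = \{(m-1)n\}.
\end{equation*}
Equivalently, one must show that the set of tensors of real rank at least $p+1$ has empty interior in $\RRR^{m \times n \times (m-1)n}$, i.e.\ that rank-$p$ tensors are dense.

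The plan is to argue by contradiction: suppose some nonempty open $U \subseteq \RRR^{m \times n \times (m-1)n}$ consisted entirely of tensors of real rank exactly $p+1$. First, I would choose $T \in U$ generic enough to be a smooth point of the real rank-$(p+1)$ locus and fix a decomposition $T = \sum_{j=1}^{p+1} \aaa_j \otimes \bbb_j \otimes \ccc_j$. A parameter count shows that such decompositions form a positive-dimensional family through $T$, so the summands can be deformed while keeping $T$ fixed. Following the collision-of-summands strategy of \cite{Sumi-etal:2010a}, I would then degenerate two summands into one another along this family and read off the leading deformation term; after exploiting the $\GL(m)\times\GL(n)\times\GL((m-1)n)$ action to put $T$ into a suitable normal form, the obstruction to perturbing $T$ into the rank-$p$ locus packages exactly into an $m$-tuple of $n \times n$ matrices $A_1,\ldots,A_m$ with the property that every nonzero real linear combination is nonsingular, i.e.\ an absolutely nonsingular $m \times n \times n$ tensor in the sense of \cite{Sumi-etal:2010a}.

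By the Hurwitz-Radon theorem, such an $m$-tuple exists if and only if $m \leq \rho(n)$, contradicting the standing hypothesis and completing the argument. The main obstacle will be the collision-and-extraction step: one has to control the tangent space to the rank-$(p+1)$ locus precisely enough to recover a \emph{full} absolutely nonsingular tensor, rather than merely a partial structure that would be consistent with $m > \rho(n)$. This requires a careful analysis of a specific Jacobian attached to $T$ in the above normal form, and I expect this is where the bulk of the technical work lies; in particular one needs to verify that the map from deformations of the decomposition into $\RRR^{m \times n \times p}$ cannot fail to be surjective at $T$ unless the extracted $m$-tuple is genuinely absolutely nonsingular, not merely ``nonsingular in the directions we sampled''.
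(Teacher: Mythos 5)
Your reduction of the theorem is correct: the case $m\leq\rho(n)$ is already settled in \cite{Sumi-etal:2010a}, the generic complex rank supplies the lower bound $p$, and ten Berge's result limits the answer to $\{p\}$ or $\{p,p+1\}$, so everything rests on showing that for $m>\rho(n)$ the rank-$p$ tensors are dense. The argument you sketch for that step, however, has a genuine gap. Your plan is to assume an open set $U$ of rank-$(p+1)$ tensors, deform (``collide'') the summands of a decomposition of a single generic $T\in U$, and extract from the obstruction an absolutely nonsingular $n\times n\times m$ tensor. No such extraction is possible from a single tensor: in the paper's notation, $\rank X(Y)>p$ exactly when $Y\notin\mathfrak{M}$, and there exist $Y$ with $\rank X(Y)>p$ for which $(Y;E_n)$ is very far from absolutely nonsingular --- the $6\times6\times3$ example in the paper gives a $Y$ on the boundary class $\mathfrak{B}$ with $\dim\hat{V}(Y)=1$, and Proposition~\ref{prop:notFull} produces such $Y$ even inside $\mathfrak{C}$. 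So the dichotomy you are hoping for (``either $T$ perturbs into the rank-$p$ locus or an absolutely nonsingular tensor appears'') is false pointwise; the entire difficulty is to show that the locus $\{Y\notin\mathfrak{M}\}$ has empty interior when $\mathfrak{A}=\varnothing$, and your proposal offers no mechanism that actually exploits the openness of $U$ rather than a single tensor. You flag this danger yourself (``not merely a partial structure''), but flagging the obstacle is not the same as overcoming it, and the collision-of-summands device you invoke is used in \cite{Sumi-etal:2010a} for the opposite implication (absolute nonsingularity forces rank $>p$), not for this converse.

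For comparison, the paper's mechanism is constructive rather than by contradiction. One passes to the core $Y=(Y_1;\ldots;Y_{m-1})\in\RRR^{n\times n\times(m-1)}$ and observes that a rank-$p$ decomposition of $X(Y)$ amounts to finding $p$ real points $\aaa_1,\ldots,\aaa_p$ on the determinantal hypersurface $|M(\aaa,Y)|=0$ whose associated vectors $\check{\aaa}_j\otimes\psi(\aaa_j,Y)$, built from the cofactors of the last row of $M(\aaa_j,Y)$, span $\RRR^p$. When $\mathfrak{A}=\varnothing$, the generic $Y$ lies in $\mathfrak{C}$, so this hypersurface carries an $(m-1)$-dimensional family of real points; the span condition is then verified on a dense Zariski-open set of $Y$ by two ingredients absent from your proposal: the generic irreducibility of $|M(\xxx,Y)|$ (the Jacobian computation of the irreducibility section), which forces any polynomial vanishing on the real points of the hypersurface to vanish identically, and a Vandermonde-type cofactor computation showing that the orthogonal complement of $U(Y)$ is then zero. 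These two steps are where the real content of the direction $m>\rho(n)\Rightarrow\typicalrankR(m,n,p)=\{p\}$ lies, and they would need to be supplied (or replaced by something equally concrete) before your outline becomes a proof.
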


We denote an $m_1\times m_2\times m_3$ tensor $(x_{ijk})$ by
$(X_1;\ldots; X_{m_3})$, where $X_t=(x_{ijt})$ is an $m_1\times m_2$
matrix for each $1\leq t\leq m_3$.
Let $3\leq m\leq n$ and $p=(m-1)n$.
For an $n\times p\times m$ tensor $X=(X_1;\ldots;X_{m-1};X_{m})$,
let $H(X)$ and $\hat{H}(X)$ be a $p\times p$ matrix and an $mn\times p$ matrix
respectively defined as follows. 
$$H(X)=\begin{pmatrix} X_1\\ X_2\\ \vdots\\ X_{m-1}\end{pmatrix}, \quad
\hat{H}(X)=\begin{pmatrix} X_1\\ X_{2}\\ \vdots\\ X_m\end{pmatrix}$$
Let
$$\mathfrak{R}=\{ X\in \RRR^{n\times p\times m} \mid \text{$H(X)$ is nonsingular}\}.$$
This is a nonempty Zariski open set.
For $X=(X_1;\ldots;X_{m-1};X_m)\in \mathfrak{R}$, we see
$$\hat{H}(X)H(X)^{-1}
=\begin{pmatrix} E_n \\ & E_n \\ &&\ddots \\ &&& E_n \\ Y_1 & Y_2 & \cdots & Y_{m-1} \end{pmatrix},
$$
where $(Y_1,Y_2,\ldots,Y_{m-1})=X_mH(X)^{-1}$.
Note that $\rank\, X\geq p$ for $X\in \mathfrak{R}$.
Let $h$ be an isomorphism from the set of $n\times p$ matrices to $\RRR^{n\times n\times (m-1)}$
given by
$$(Y_1,Y_2,\ldots,Y_{m-1}) \mapsto (Y_1;Y_2;\ldots;Y_{m-1}).$$
Then $h(X_mH(X)^{-1}) \in \RRR^{n\times n\times (m-1)}$.
We consider the following subsets of $\RRR^{n\times n\times (m-1)}$.
For $Y=(Y_1;Y_2;\ldots;Y_{m-1})\in \RRR^{n\times n\times (m-1)}$ and
$\aaa=(a_1,\ldots,a_{m-1},a_m)^\top \in \RRR^{m}$,
let
$$M(\aaa,Y)=\sum_{k=1}^{m-1} a_kY_k-a_mE_n$$
and set
$$\mathfrak{C}=\{Y \in \mathbb{R}^{n\times n\times(m-1)}\mid
|M(\aaa,Y)|<0
\text{ for some $\aaa\in \mathbb{R}^m$}\}$$
and
$$\mathfrak{A}=\{Y \in \mathbb{R}^{n\times n\times(m-1)} \mid
|M(\aaa,Y)|>0 \text{ for all $\aaa\ne \zerovec$}
\}.$$
The subsets $\mathfrak{C}$ and $\mathfrak{A}$ are open sets in Euclidean topology
and $\overline{\mathfrak{C}}\cup \overline{\mathfrak{A}}=\RRR^{n\times n\times(m-1)}$.
In \cite{Sumi-etal:2010a}, we show that $\mathfrak{A}$ is not empty if and only if
$m\leq \rho(n)$ and that $\rank\, X>p$ for any $X\in \mathfrak{R}$
with $h(X_mH(X)^{-1})\in \mathfrak{A}$.
In this paper, we show that there exists an open subset $\mathfrak{F}$ of $\mathfrak{C}$ such that
$\overline{\mathfrak{F}}=\overline{\mathfrak{C}}$ and $\rank\, X=p$ 
for any $X\in \mathfrak{R}$ with $h(X_mH(X)^{-1})\in \mathfrak{F}$.

\section{Typical rank}

Due to \cite{Strassen:1983,tenBerge:2000} and others, 
a number $r$ is a typical rank of tensors of $\RRR^{m_1\times m_2\times m_3}$ if the subset of tensors of $\RRR^{m_1\times m_2\times m_3}$
of rank $r$ has nonzero volume.
In this paper, we adopt the algebraic definition due to Friedland.
These definitions are equivalent, since for any $r\geq 0$, the set of tensors of rank $r$ is a semi-algebraic set
by the Tarski-Seidenberg principle (cf. \cite{Bochnak-Coste-Roy:1998}).

For $\xxx=(x_1,\ldots,x_{m_1})^\top\in \CCC^{m_1}$, 
$\yyy=(y_1,\ldots,y_{m_2})^\top\in \CCC^{m_2}$, and
$\zzz=(z_1,\ldots,z_{m_3})^\top\in \CCC^{m_3}$, we denote $(x_iy_jz_k)\in \CCC^{m_1\times m_2\times m_3}$ by $\xxx \otimes \yyy\otimes \zzz$.
Let $f_t\colon (\CCC^{m_1}\times \CCC^{m_2}\times \CCC^{m_3})^t \to 
\CCC^{m_1\times m_2\times m_3}$ be a map given by
$$f_t(\xxx_{1,1}, \xxx_{1,2}, \xxx_{1,3}, \ldots, \xxx_{t,1}, \xxx_{t,2}, \xxx_{t,3}) =
\sum_{\ell=1}^t \xxx_{\ell,1} \otimes \xxx_{\ell,2} \otimes \xxx_{\ell,3}.$$
Let $S$ be a subset of $\RRR^{m_1\times m_2\times m_3}$.
$S$ is called semi-algebraic if
it is a finite Boolean combination (that is, a finite composition of disjunctions, conjunctions and negatios) of sets of the form 
\begin{equation} \label{eq:>0}
\{(a_{ijk})\in \RRR^{m_1\times m_2\times m_3} \mid f(a_{111},\ldots,a_{m_1,m_2,m_3})>0\}
\end{equation}
and
$$\{(a_{ijk})\in \RRR^{m_1\times m_2\times m_3} \mid g(a_{111},\ldots,a_{m_1,m_2,m_3})=0\},$$ 
where $f$ and $g$ are polynomials in $m_1m_2m_3$ indeterminates $x_{111} ,\ldots, x_{m_1,m_2,m_3}$ over $\RRR$.
Then $S$ is an open semi-algebraic set if and only if it is expressed as a finite Boolean combinations
of sets of the form \eqref{eq:>0}, and it is a dense open semi-albebraic set if and only if it is a
Zariski open set, that is, expressed as
$$\{(a_{ijk})\in \RRR^{m_1\times m_2\times m_3} \mid g(a_{111},\ldots,a_{m_1,m_2,m_3})\ne0\}.$$

\begin{thm}[{\cite[Theorem~7.1]{Friedland:2008}}] \label{thm:Friedland}
The space $\RRR^{m_1\times m_2\times m_3}$, $m_1,m_2,m_3 \in\mathbb{N}$, 
contains a finite number of open connected disjoint semi-algebraic sets 
$O_1,\ldots,O_M$ satisfying the following properties.
\begin{enumerate}
\item $\RRR^{m_1\times m_2\times m_3}\smallsetminus \cup_{i=1}^M O_i$
is a closed semi-algebraic set $\RRR^{m_1\times m_2\times m_3}$ 
of dimension less than $m_1m_2m_3$.
\item  Each $T \in O_i$ has rank $r_i$ for $i = 1,\ldots,M$.
\item The number $\min(r_1,\ldots,r_M)$ is equal to the generic rank $\grank(m_1,m_2,m_3)$ of
$\CCC^{m_1\times m_2\times m_3}$, that is, the minimal $t\in \mathbb{N}$
such that the closure of the image of $f_t$ is equal to $\CCC^{m_1\times m_2\times m_3}$.
\item $\mtrank(m_1,m_2,m_3):=\max(r_1,\ldots,r_M)$ is the minimal $t\in \mathbb{N}$ such that the closure of $f_t((\RRR^{m_1}\times \RRR^{m_2}\times \RRR^{m_3})^k)$ is equal to $\RRR^{m_1\times m_2\times m_3}$.
\item For each integer $r\in [\grank(m_1,m_2,m_3),\mtrank(m_1,m_2,m_3)]$,
there exists $r_i=r$ for some integer $i\in [1,M]$.
\end{enumerate}
\end{thm}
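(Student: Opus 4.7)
The plan is to stratify $\RRR^{m_1\times m_2\times m_3}$ by rank, use semi-algebraic cell decomposition to extract the top-dimensional open strata, and then match the resulting list of ranks against the generic complex rank and the maximum-rank condition.

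First, I would show that each set $S_r:=\{T : \rank T=r\}$ is semi-algebraic. By the Tarski--Seidenberg principle, the image $\mathrm{Im}(f_r^\RRR)$ is semi-algebraic (it is the projection of the polynomial graph of $f_r^\RRR$), and since $S_r=\mathrm{Im}(f_r^\RRR)\setminus\mathrm{Im}(f_{r-1}^\RRR)$ and rank is bounded by $m_1m_2m_3$, only finitely many $S_r$ are non-empty. Let $U=\bigcup_r\mathrm{int}(S_r)$ and take $O_1,\ldots,O_M$ to be its finitely many connected components (available by the cell decomposition theorem for semi-algebraic sets). Each $O_i$ is open, connected, semi-algebraic, and lies entirely in a single $\mathrm{int}(S_{r_i})$ by connectedness together with pairwise disjointness of the $S_r$; this yields (2). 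The complement of $U$ is the finite union of boundaries $S_r\setminus\mathrm{int}(S_r)$, each of strictly smaller semi-algebraic dimension than $m_1m_2m_3$, which gives (1).

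For property~(3), the inequality $\rank_\RRR T \geq \rank_\CCC T$ forces $r_i\geq\grank$. Conversely, since the complex image $\mathrm{Im}(f_{\grank}^\CCC)$ is Zariski dense in $\CCC^{m_1\times m_2\times m_3}$, the Jacobian of $f_{\grank}$ has full rank at a generic point of the parameter space; because this is a Zariski-open condition that is non-empty over $\CCC$, it is realized at some real point. The implicit function theorem then shows $\mathrm{Im}(f_{\grank}^\RRR)$ contains a Euclidean open set, which meets some $O_i$ and forces $r_i\leq\grank$. For property~(4), when $t=\max r_i$ every $O_i$ lies in $\mathrm{Im}(f_t^\RRR)$, and since the complement of $\bigcup_i O_i$ is of lower dimension, $\overline{\mathrm{Im}(f_t^\RRR)}=\RRR^{m_1\times m_2\times m_3}$. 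Conversely, if $t<\max r_i$ and $O_j$ has $r_j>t$, then openness of $O_j$ forbids any of its points from being a limit of rank-$\leq t$ tensors, so $\overline{\mathrm{Im}(f_t^\RRR)}$ misses $O_j$.

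Property~(5) is where I expect the main difficulty. Given typical ranks $r<s$ and an integer $q\in(r,s)$, my strategy is to take a generic rank-$s$ decomposition $T=\sum_{\ell=1}^{s}\xxx_\ell\otimes \yyy_\ell\otimes \zzz_\ell$ with $T\in O_j$ and continuously shrink $s-q$ of the summands to zero; the resulting tensor lies in $\mathrm{Im}(f_q^\RRR)$, and perturbing the remaining $q$ summands parametrizes a family I would hope to upgrade to a Euclidean-open set of rank-exactly-$q$ tensors, forcing $q$ to be typical. The obstacle is this upgrade: one must show both that the image of this $q$-summand family has non-empty interior in $\RRR^{m_1\times m_2\times m_3}$ and that the rank does not collapse below $q$ on that open set. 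The first part relies on a Jacobian calculation for $f_q^\RRR$ at a generic point (whose differential has full rank since $q\geq\grank$, so dimensions permit surjectivity), while the second relies on semi-algebraic dimension counting, contrasting $\mathrm{Im}(f_{q-1}^\RRR)$ (of dimension at most $(q-1)(m_1+m_2+m_3)$) with the Euclidean-open image being constructed. Combining these to secure a non-empty interior rank-$q$ cell is the core technical step and, I expect, the crux of the entire argument.
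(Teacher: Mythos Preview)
The paper does not prove this theorem at all; it is quoted verbatim from \cite[Theorem~7.1]{Friedland:2008} and used as a black box. So there is no ``paper's own proof'' to compare against, and your proposal is an attempt to reconstruct Friedland's argument rather than anything the present authors do.

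That said, your sketch has two genuine gaps worth flagging. In the converse direction of~(4) you write that ``openness of $O_j$ forbids any of its points from being a limit of rank-$\leq t$ tensors.'' This is false as stated: the border-rank phenomenon shows that a tensor of rank $r_j>t$ can indeed be a limit of rank-$\leq t$ tensors. What you actually need is that the semi-algebraic set $\overline{\mathrm{Im}(f_t^\RRR)}\setminus\mathrm{Im}(f_t^\RRR)$ has strictly lower dimension than $m_1m_2m_3$, so a full-dimensional open $O_j$ contained in the closure would have to meet $\mathrm{Im}(f_t^\RRR)$ itself, yielding the contradiction. This is fixable but is not what you wrote.

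The more serious issue is your plan for~(5). Your dimension count ``$\dim\mathrm{Im}(f_{q-1}^\RRR)\leq (q-1)(m_1+m_2+m_3)$ versus $m_1m_2m_3$'' does not separate anything once $q-1\geq\grank(m_1,m_2,m_3)$: in that regime $f_{q-1}^\RRR$ already has full-dimensional image (indeed non-empty Euclidean interior, by your own argument for~(3)), so you cannot conclude that a generic rank-$\leq q$ tensor has rank exactly $q$. The shrink-and-perturb construction therefore does not, by itself, produce an open set on which the rank is exactly $q$; you need a different mechanism (Friedland's argument passes through a more careful analysis of the semi-algebraic stratification of the closures $\overline{\mathrm{Im}(f_t^\RRR)}$ as $t$ varies). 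As you suspected, this is the crux, and the approach you outline does not close it.
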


\begin{definition}\rm 
A positive number $r$ is called a typical rank of $\RRR^{m_1\times m_2\times m_3}$ 
if 
$$r \in [\grank(m_1,m_2,m_3),\mtrank(m_1,m_2,m_3)].$$
Put 
$$\typicalrankR(m_1,m_2,m_3)=[\grank(m_1,m_2,m_3),\mtrank(m_1,m_2,m_3)].$$
\end{definition}

We state basic facts.

\begin{prop} \label{prop:open}
Let $r$ be a positive number and $U$ a nonempty open set of\/ $\RRR^{m_1\times m_2\times m_3}$.
If every tensor of $U$ has rank $r$, then $r$ is a typical rank of\/ $\RRR^{m_1\times m_2\times m_3}$. 
\end{prop}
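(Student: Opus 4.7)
The plan is to reduce the statement directly to Theorem~\ref{thm:Friedland} by using the density of $\bigcup_{i=1}^M O_i$ in $\RRR^{m_1\times m_2\times m_3}$ and the fact that the ranks attained on the $O_i$'s exhaust the interval $[\grank(m_1,m_2,m_3),\mtrank(m_1,m_2,m_3)]$.

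First I would invoke Theorem~\ref{thm:Friedland} to fix the disjoint open semi-algebraic sets $O_1,\ldots,O_M$ together with their associated ranks $r_1,\ldots,r_M$. The key topological input is item~(1): the complement $C\define\RRR^{m_1\times m_2\times m_3}\smallsetminus\bigcup_{i=1}^M O_i$ is a closed semi-algebraic set of dimension strictly less than $m_1m_2m_3$, hence its Euclidean interior is empty. Equivalently, $\bigcup_{i=1}^M O_i$ is a dense open subset of $\RRR^{m_1\times m_2\times m_3}$.

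Next I would exploit the hypothesis that $U$ is a nonempty Euclidean open set. By density, $U\cap\bigcup_{i=1}^M O_i\ne\emptyset$, so there exists an index $i\in\{1,\ldots,M\}$ with $U\cap O_i\ne\emptyset$. Pick any $T\in U\cap O_i$. By the hypothesis on $U$, $\rank T=r$, while by item~(2) of Theorem~\ref{thm:Friedland}, $\rank T=r_i$. Hence $r=r_i\in\{r_1,\ldots,r_M\}\subseteq[\grank(m_1,m_2,m_3),\mtrank(m_1,m_2,m_3)]$ by item~(5), which is precisely the definition of a typical rank of $\RRR^{m_1\times m_2\times m_3}$.

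There is no real obstacle here; the only point one must be slightly careful about is justifying that ``dimension less than $m_1m_2m_3$'' for a closed semi-algebraic set implies empty Euclidean interior, which is standard (a semi-algebraic set with nonempty interior contains a nonempty Euclidean ball, which has full dimension). Everything else is a direct unwinding of the definitions and Friedland's classification.
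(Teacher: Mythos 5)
Your proposal is correct and follows essentially the same route as the paper: invoke Theorem~\ref{thm:Friedland}, use the fact that the complement of $\bigcup_{i=1}^M O_i$ has dimension less than $m_1m_2m_3$ to find an index $i$ with $U\cap O_i\ne\emptyset$, and conclude $r=r_i$ is a typical rank. The paper's proof is just a terser version of the same argument.
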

\begin{proof}
Let $O_1,\ldots,O_M$ be open connected disjoint semi-algebraic sets
as in Theorem~\ref{thm:Friedland}.
Since $\dim(\RRR^{m_1\times m_2\times m_3}\smallsetminus \cup_{i=1}^M O_i)<m_1m_2m_3$, there exists $i\in [1,M]$ such that $U\cap O_i$ is not empty.
\end{proof}

\begin{prop} \label{prop:compare}
Let $m_1,m_2,m_3,m_4 \in \mathbb{N}$ with $m_3 < m_4$. Then
$$\grank(m_1,m_2,m_3) \leq  \grank(m_1,m_2,m_4)$$
and
$$\mtrank(m_1,m_2,m_3) \leq \mtrank(m_1,m_2,m_4).$$
\end{prop}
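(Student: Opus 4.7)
The plan is to exploit the projection that drops the last $m_4-m_3$ slices of the third factor. Define, over the working field $F\in\{\RRR,\CCC\}$, the linear map
$$\pi\colon F^{m_1\times m_2\times m_4}\to F^{m_1\times m_2\times m_3},\quad (T_1;\ldots;T_{m_4})\mapsto (T_1;\ldots;T_{m_3}).$$
The key observation is that $\pi$ does not increase rank: if $T=\sum_{\ell=1}^{r}\xxx_\ell\otimes\yyy_\ell\otimes\zzz_\ell$ with $\zzz_\ell\in F^{m_4}$, then $\pi(T)=\sum_{\ell=1}^{r}\xxx_\ell\otimes\yyy_\ell\otimes\zzz_\ell'$ where $\zzz_\ell'\in F^{m_3}$ is the truncation of $\zzz_\ell$; hence $\rank\pi(T)\leq \rank T$. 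Equivalently, if $A_t^{(d)}$ denotes the image of $f_t\colon (F^{m_1}\times F^{m_2}\times F^{d})^t\to F^{m_1\times m_2\times d}$, then $\pi(A_t^{(m_4)})\subseteq A_t^{(m_3)}$ for every $t$.

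For the generic rank inequality, I would set $r_4\define \grank(m_1,m_2,m_4)$, so by definition $\overline{A_{r_4}^{(m_4)}}=\CCC^{m_1\times m_2\times m_4}$. Since $\pi$ is a linear surjection between finite-dimensional complex vector spaces, it is a continuous open map; therefore it sends dense subsets to dense subsets, and we get $\overline{\pi(A_{r_4}^{(m_4)})}=\CCC^{m_1\times m_2\times m_3}$. Combined with $\pi(A_{r_4}^{(m_4)})\subseteq A_{r_4}^{(m_3)}$, this forces $\overline{A_{r_4}^{(m_3)}}=\CCC^{m_1\times m_2\times m_3}$, so $\grank(m_1,m_2,m_3)\leq r_4$.

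The argument for $\mtrank$ is word-for-word the same, working over $\RRR$ and taking $s_4\define \mtrank(m_1,m_2,m_4)$. One uses that $\pi$ restricted to $\RRR^{m_1\times m_2\times m_4}$ is still a continuous open linear surjection onto $\RRR^{m_1\times m_2\times m_3}$, so density (in the Euclidean topology) of $A_{s_4}^{(m_4)}$ transfers through $\pi$ to yield density of $A_{s_4}^{(m_3)}$, giving $\mtrank(m_1,m_2,m_3)\leq s_4$.

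There is no real obstacle here; the argument is a routine monotonicity under coordinate projection. The only point worth isolating carefully is the elementary topological fact that a continuous open surjection sends a dense subset to a dense subset, which is immediate from the open map property of the projection $\pi$.
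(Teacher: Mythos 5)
Your proof is correct, but it takes a noticeably different route from the paper's, particularly for the $\mtrank$ inequality. You push forward along the truncation $\pi$: the set of tensors of rank at most $r$ in the larger format is dense (by the characterizations of $\grank$ and $\mtrank$ as the minimal $t$ with $\overline{\mathrm{im}\,f_t}$ equal to the whole space), its image under the continuous surjection $\pi$ is again dense, and that image consists of tensors of rank at most $r$; this gives both inequalities by one uniform argument read directly off the definitions. (Minor remark: openness of $\pi$ is not actually needed -- a continuous \emph{surjection} already carries dense sets to dense sets, since the preimage of a nonempty open set is nonempty open.) The paper instead argues pointwise for $\grank$: it intersects the dense Zariski-open locus of rank $\grank(m_1,m_2,m_4)$ in the big format, restricted to the first $m_3$ slices, with the dense locus of rank $\grank(m_1,m_2,m_3)$ in the small format, and applies $\rank\, Y\leq\rank(Y;X)$ to a tensor in the intersection. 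For $\mtrank$ the paper goes in the opposite direction: it takes an open semi-algebraic set $V$ of the small format where the rank equals $\mtrank(m_1,m_2,m_3)$, forms the open cylinder $\{(A;B)\mid A\in V\}$ in the big format, and intersects it with one of the Friedland cells $O_i$ of constant rank $s$, yielding $\mtrank(m_1,m_2,m_3)\leq s\leq\mtrank(m_1,m_2,m_4)$. Both arguments rest on the same monotonicity of rank under dropping slices; yours is more elementary and symmetric and avoids invoking the cell decomposition of Theorem~\ref{thm:Friedland} for the $\mtrank$ half, while the paper's version exhibits an explicit typical rank $s$ of the larger format dominating $\mtrank(m_1,m_2,m_3)$.
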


\begin{proof}
Let $U$ be the nonempty Zariski open subset $U$ of $\CCC^{m_1\times m_2\times m_4}$ 
consisting of all tensors of rank $\grank(m_1,m_2,m_4)$ 
and put
$$V=\{(Y_1;Y_2;\ldots;Y_{m_3}) \in \CCC^{m_1\times m_2\times m_3} \mid (Y_1;Y_2;\ldots;Y_{m_4}) \in U\}.$$
Then $V$ is a nonempty Zariski open set of $\CCC^{m_1\times m_2\times m_3}$. 
For the subset $U^\prime$ of $\CCC^{m_1\times m_2\times m_3}$ consisting of all tensors of rank $\grank(m_1,m_2,m_3)$, the intersection $V\cap U^\prime$ is a nonempty Zariski open set.
Since $\rank\, Y \leq \rank(Y;X)$ for $Y \in \CCC^{m_1\times m_2\times m_3}$ and
$(Y;X) \in \CCC^{m_1\times m_2\times m_4}$, we see 
$$\grank(m_1,m_2,m_3) \leq \grank(m_1,m_2,m_4).$$
\par
Next, take an open semi-algebraic set $V$ of $\RRR^{m_1\times m_2\times m_3}$ consisting of tensors
of rank $\mtrank(m_1,m_2,m_3)$. Then there are $s \in \typicalrankR(m_1,m_2,m_4)$ and an open semi-algebraic set $O$ of $\RRR^{m_1\times m_2\times m_4}$ consisting of tensors of rank $s$ such that 
$\{(A;B) | A \in V, B \in \RRR^{m_1\times m_2\times (m_4-m_3)}\}\cap O \ne \varnothing$. Thus
$$\mtrank(m_1,m_2,m_3) \leq s \leq \mtrank(m_1,m_2,m_4).$$
\qed
\end{proof}

The action of $\GL(m)\times \GL(n)\times \GL(p)$ on $\RRR^{m\times n\times p}$
is given as follows.
Let $P=(p_{ij})\in \GL(n)$, $Q=(q_{ij}) \in \GL(m)$, and
$R=(r_{ij}) \in \GL(p)$.
The tensor $(b_{ijk})=(P,Q,R)\cdot (a_{ijk})$ is defined as
$$b_{ijk}=\sum_{s=1}^m\sum_{t=1}^n\sum_{u=1}^p p_{is}q_{jt}r_{ku}a_{stu}.$$
Therefore, 
$$(P,Q,R)\cdot (A_1;\ldots;A_p)
=(\sum_{u=1}^p r_{1u}PA_uQ^\top;\ldots;\sum_{u=1}^p r_{pu}PA_uQ^\top).$$

\begin{definition}\rm
Two tensors $A$ and $B$ is called {\sl equivalent}
if there exists $g\in \GL(m)\times \GL(n)\times \GL(p)$ such that
$B=g\cdot A$.
\end{definition}

\begin{prop}
If two tensors are equivalent, then they have the same rank.
\end{prop}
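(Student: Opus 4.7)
The plan is to observe that the $\GL(m)\times\GL(n)\times\GL(p)$-action is $\RRR$-linear and sends rank-one tensors to rank-one tensors; a rank decomposition of $A$ then transforms into a decomposition of the same length of $g\cdot A$, which forces $\rank(g\cdot A)\leq \rank A$, and the reverse inequality follows by acting by $g^{-1}$.

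First I would check the key identity
$$(P,Q,R)\cdot(\xxx\otimes \yyy\otimes \zzz)=(P\xxx)\otimes(Q\yyy)\otimes(R\zzz)$$
for any $\xxx\in\RRR^m$, $\yyy\in\RRR^n$, $\zzz\in\RRR^p$ and $(P,Q,R)\in\GL(m)\times\GL(n)\times\GL(p)$. This is a direct calculation from the defining formula of the action: the $(i,j,k)$ entry of the left-hand side is
$$\sum_{s,t,u}p_{is}q_{jt}r_{ku}\,x_sy_tz_u=\Bigl(\sum_s p_{is}x_s\Bigr)\Bigl(\sum_t q_{jt}y_t\Bigr)\Bigl(\sum_u r_{ku}z_u\Bigr),$$
which is the $(i,j,k)$ entry of the right-hand side. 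In particular, the action preserves the property of being a rank-one tensor.

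Second, I would note that the action is $\RRR$-linear in the tensor argument, either directly from the formula $b_{ijk}=\sum p_{is}q_{jt}r_{ku}a_{stu}$ or from the matrix expression already displayed in the paper. Consequently, if $A=\sum_{\ell=1}^{r}\xxx_\ell\otimes\yyy_\ell\otimes\zzz_\ell$ is a rank decomposition of $A$ of length $r=\rank A$, applying $g=(P,Q,R)$ yields
$$g\cdot A=\sum_{\ell=1}^{r}(P\xxx_\ell)\otimes(Q\yyy_\ell)\otimes(R\zzz_\ell),$$
so $\rank(g\cdot A)\leq r=\rank A$. Applying $g^{-1}=(P^{-1},Q^{-1},R^{-1})$ to $g\cdot A$ returns $A$, giving $\rank A\leq \rank(g\cdot A)$, hence equality.

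There is no real obstacle here; the only thing to be careful about is the verification of the rank-one identity above, since the paper uses a slightly non-standard indexing convention in which $P$ acts on the first index, $Q$ on the second, and $R$ on the third (as opposed to $P$ acting on the rows of each matrix slice). Once this is checked, the rest is immediate from linearity and the existence of inverses in $\GL(m)\times\GL(n)\times\GL(p)$.
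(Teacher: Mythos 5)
Your proof is correct; the paper states this proposition without proof as a well-known fact, and your argument --- the action sends rank-one tensors to rank-one tensors, is linear, hence does not increase rank, and equality follows by acting with $(P^{-1},Q^{-1},R^{-1})$ --- is exactly the standard argument the paper leaves implicit. The only implicit step worth noting is that $(P,Q,R)\mapsto g\cdot A$ is a genuine group action, so that $g^{-1}\cdot(g\cdot A)=A$; this is immediate from the defining formula.
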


A $1\times m_2\times m_3$ tensor $T$ is an $m_2\times m_3$ matrix and 
$\rank\, T$ is equal to the matrix rank.
The following three propositions are well-known.

\begin{prop}
Let $m_1,m_2,m_3 \in \mathbb{N}$ with $2 \leq m_1 \leq m_2 \leq m_3$. 
If $m_1m_2 \leq m_3$, then typical
rank of\/ $\RRR^{m_1\times m_2\times m_3}$ is only one integer $m_1m_2$.
\end{prop}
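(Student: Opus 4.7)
The plan is to sandwich the typical rank by showing that every tensor in $\RRR^{m_1\times m_2\times m_3}$ has rank at most $m_1m_2$ and that a generic tensor has rank at least $m_1m_2$. Since $\typicalrankR(m_1,m_2,m_3)=[\grank(m_1,m_2,m_3),\mtrank(m_1,m_2,m_3)]$, matching these two bounds at $m_1m_2$ collapses the interval to the single value $\{m_1m_2\}$.

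For the upper bound, I would write an arbitrary tensor $T=(T_1;\ldots;T_{m_3})\in\RRR^{m_1\times m_2\times m_3}$ as
$$T=\sum_{i=1}^{m_1}\sum_{j=1}^{m_2}\eee_i\otimes \eee_j\otimes \vvv_{ij},$$
where $\vvv_{ij}\in\RRR^{m_3}$ is the vector whose $k$-th entry is the $(i,j)$-entry of $T_k$. This is a decomposition of $T$ into $m_1m_2$ rank-one tensors, so $\rank T\leq m_1m_2$ for every $T$; in particular $\mtrank(m_1,m_2,m_3)\leq m_1m_2$.

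For the lower bound, I would use the mode-$3$ flattening: let $F(T)$ be the $m_3\times m_1m_2$ matrix whose $k$-th row is the vectorization of $T_k$. From any decomposition $T=\sum_{\ell=1}^r \xxx_\ell\otimes \yyy_\ell\otimes \zzz_\ell$ we obtain $F(T)=\sum_{\ell=1}^r \zzz_\ell(\xxx_\ell\otimes \yyy_\ell)^\top$, so the matrix rank of $F(T)$ is at most $r$; hence $\rank T\geq\rank F(T)$ (over $\CCC$ as well as over $\RRR$). Since $m_1m_2\leq m_3$, the generic $m_3\times m_1m_2$ matrix has (matrix) rank $m_1m_2$, because the nonvanishing of some $m_1m_2\times m_1m_2$ minor is a Zariski open condition. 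It follows that $\grank(m_1,m_2,m_3)\geq m_1m_2$. Combined with $\grank(m_1,m_2,m_3)\leq\mtrank(m_1,m_2,m_3)\leq m_1m_2$, we conclude $\grank=\mtrank=m_1m_2$ and therefore $\typicalrankR(m_1,m_2,m_3)=\{m_1m_2\}$.

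There is no real obstacle: the result is classical, and both directions reduce to standard observations, namely the trivial $m_1m_2$-term decomposition along the first two indices, and the fact that tensor rank dominates any flattening rank. The only point meriting explicit verification is the last claim that the generic flattening has full column rank when $m_3\geq m_1m_2$, which is immediate from the Zariski openness of the maximal-minor condition.
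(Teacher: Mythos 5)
Your argument is correct and complete. The paper itself offers no proof to compare against --- it lists this proposition among facts that are ``well-known'' --- so the only question is whether your reasoning stands on its own, and it does: the decomposition $T=\sum_{i,j}\eee_i\otimes\eee_j\otimes\vvv_{ij}$ gives $\mtrank(m_1,m_2,m_3)\leq m_1m_2$ for \emph{every} tensor, and since the mode-$3$ flattening $T\mapsto F(T)$ is a linear bijection onto the space of $m_3\times m_1m_2$ matrices, the image of $f_t$ for $t<m_1m_2$ lies in the proper Zariski-closed set where $F$ drops rank, so its closure cannot be all of $\CCC^{m_1\times m_2\times m_3}$ and $\grank(m_1,m_2,m_3)\geq m_1m_2$. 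This is the standard sandwich argument, and together with $\typicalrankR=[\grank,\mtrank]$ it yields the claim exactly as you say.
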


\begin{prop} \label{prop:diagonaldecomp}
An $m_1\times m_2\times m_3$ tensor $(Y_1;\ldots;Y_{m_3})$ has rank 
less than or equal to $r$ if and only if
there are an $m_1\times r$ matrix $P$, an $r\times m_2$ matrix $Q$,
and $r\times r$ diagonal matrices $D_1,\ldots,D_{m_3}$
such that
$Y_k=PD_kQ$
for $1\leq k\leq m_3$.
\end{prop}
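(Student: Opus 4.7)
The plan is to unpack the definition of tensor rank and identify each piece with one of the factors $P$, $Q$, $D_k$. By definition, $\rank T\leq r$ for $T=(Y_1;\ldots;Y_{m_3})$ exactly when there exist vectors $\xxx_\ell\in \RRR^{m_1}$, $\yyy_\ell\in \RRR^{m_2}$, $\zzz_\ell=(z_{\ell,1},\ldots,z_{\ell,m_3})^\top\in \RRR^{m_3}$ for $\ell=1,\ldots,r$ with $T=\sum_{\ell=1}^r \xxx_\ell\otimes \yyy_\ell\otimes \zzz_\ell$. Reading off the $k$-th slice of a single rank-one term $\xxx\otimes\yyy\otimes\zzz$ gives the matrix $z_k\xxx\yyy^\top$, so summing over $\ell$ the $k$-th slice of $T$ is $\sum_{\ell=1}^r z_{\ell,k}\xxx_\ell\yyy_\ell^\top$.

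For the forward implication, assume such a decomposition exists. Assemble the data by letting $P$ be the $m_1\times r$ matrix whose $\ell$-th column is $\xxx_\ell$, letting $Q$ be the $r\times m_2$ matrix whose $\ell$-th row is $\yyy_\ell^\top$, and letting $D_k=\diag(z_{1,k},\ldots,z_{r,k})$. A direct multiplication shows
$$PD_kQ=\sum_{\ell=1}^r z_{\ell,k}\xxx_\ell\yyy_\ell^\top=Y_k,$$
which is the required factorization.

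For the converse, reverse the construction: given matrices $P$, $Q$, and diagonal matrices $D_k$ with $Y_k=PD_kQ$, define $\xxx_\ell$ as the $\ell$-th column of $P$, define $\yyy_\ell^\top$ as the $\ell$-th row of $Q$, and define $\zzz_\ell\in \RRR^{m_3}$ by setting its $k$-th entry equal to the $(\ell,\ell)$-entry of $D_k$. Then the tensor $\sum_{\ell=1}^r \xxx_\ell\otimes \yyy_\ell\otimes \zzz_\ell$ has $k$-th slice $\sum_\ell z_{\ell,k}\xxx_\ell\yyy_\ell^\top=PD_kQ=Y_k$, so it equals $T$ and witnesses $\rank T\leq r$.

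There is no substantive obstacle here; the statement is essentially a bookkeeping translation between the sum-of-rank-one-tensors definition of rank and the slice-wise matrix factorization $Y_k=PD_kQ$. The only care needed is to keep straight which index of $\xxx_\ell,\yyy_\ell,\zzz_\ell$ is placed into which slot of $P$, $Q$, and the diagonals of the $D_k$; once that correspondence is fixed, both directions follow from a single matrix identity.
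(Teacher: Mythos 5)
Your proof is correct and is the standard argument; the paper states this proposition without proof as a well-known fact, and the bookkeeping translation you give (columns of $P$ from the $\xxx_\ell$, rows of $Q$ from the $\yyy_\ell$, diagonal entries of $D_k$ from the $k$-th coordinates of the $\zzz_\ell$) is exactly the intended justification.
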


\begin{prop}
Let $X=(x_{ijk})$ be an $m_1\times m_2\times m_3$ tensor.
For an $m_2\times m_1\times m_3$ tensor $Y=(x_{jik})$ and
an $m_1\times m_3\times m_2$ tensor $Z=(x_{ikj})$, it holds that
$$\rank\, X=\rank\, Y=\rank\, Z.$$
\end{prop}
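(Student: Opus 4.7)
The plan is to reduce this to the simple observation that a rank-one tensor remains rank-one under any permutation of the three index axes, and that the three operations $X\mapsto Y$ and $X\mapsto Z$ are involutive (more precisely, they are involved in a finite group action on tensor formats).

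First I would unfold the definition of rank: $\rank X$ is the minimum $r$ such that there exist vectors $\xxx_\ell\in\RRR^{m_1}$, $\yyy_\ell\in\RRR^{m_2}$, $\zzz_\ell\in\RRR^{m_3}$, for $\ell=1,\dots,r$, with
\[
X=\sum_{\ell=1}^{r}\xxx_\ell\otimes\yyy_\ell\otimes\zzz_\ell,
\]
i.e.\ $x_{ijk}=\sum_{\ell=1}^r (\xxx_\ell)_i(\yyy_\ell)_j(\zzz_\ell)_k$. Fix a decomposition with $r=\rank X$. Reading off the entries of $Y=(x_{jik})$ and $Z=(x_{ikj})$ from this formula gives immediately
\[
Y=\sum_{\ell=1}^{r}\yyy_\ell\otimes\xxx_\ell\otimes\zzz_\ell,\qquad
Z=\sum_{\ell=1}^{r}\xxx_\ell\otimes\zzz_\ell\otimes\yyy_\ell,
\]
which exhibit $Y$ and $Z$ as sums of $r$ rank-one tensors. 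Hence $\rank Y\leq \rank X$ and $\rank Z\leq \rank X$.

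For the reverse inequalities, I would use that each transposition is an involution on its respective space of tensors. The map $X\mapsto Y$ from $\RRR^{m_1\times m_2\times m_3}$ to $\RRR^{m_2\times m_1\times m_3}$ swapping the first two indices, applied again (as a map from $\RRR^{m_2\times m_1\times m_3}$ to $\RRR^{m_1\times m_2\times m_3}$), recovers $X$; the same argument used in the previous paragraph, applied to $Y$, therefore yields $\rank X\leq \rank Y$. Similarly for $Z$ (or one could cite that the two transpositions $(1\,2)$ and $(2\,3)$ generate the symmetric group $S_3$ acting on the three axes, so all six permuted tensors have the same rank).

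There is no real obstacle here; the statement is essentially a tautology once one uses the symmetric form of the rank-one definition $\xxx\otimes\yyy\otimes\zzz$. The only mild point to be careful about is not to conflate the three different ambient spaces $\RRR^{m_1\times m_2\times m_3}$, $\RRR^{m_2\times m_1\times m_3}$, $\RRR^{m_1\times m_3\times m_2}$, and to note that the minimal $r$ in each definition a priori could differ, which is exactly what the two-sided inequalities above rule out.
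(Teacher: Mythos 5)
Your proof is correct and is exactly the standard argument: permuting the three axes sends a rank-one tensor $\xxx\otimes\yyy\otimes\zzz$ to another rank-one tensor, and the involutivity of each transposition gives the reverse inequality. The paper states this proposition without proof as a well-known fact, so your argument simply supplies the routine verification it relies on.
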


For an integer $2\leq m<n<2m$, the number $n$ is an only typical rank of $\RRR^{m\times n\times 2}$.  Indeed, it is known that
\begin{thm}[\cite{Miyazaki-etal:2009}]
Let $2\leq m<n$.
There is an open dense semi-algebraic set $O$ of $\RRR^{m\times n\times 2}$ 
of which any tensor is equivalent to $((E_{m},O);(O,E_{m}))$ which has rank $\min(n,2m)$.
\end{thm}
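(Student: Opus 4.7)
The approach is to identify $\RRR^{m\times n\times 2}$ with the space of $m\times n$ matrix pencils via $(A;B)\mapsto\lambda A+\mu B$ and to apply Kronecker--Weierstrass theory. Under this identification, the action of $\GL(m)\times\GL(n)\times\GL(2)$ becomes strict equivalence of pencils (from the first two factors) combined with an invertible linear reparametrization of $(\lambda,\mu)$ (from the third factor); two tensors are therefore equivalent precisely when their pencils share the same Kronecker canonical form. Since no regular part will be present in our generic situation, this form is determined by the right minimal indices alone and is automatically invariant under the $\GL(2)$-factor.

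For the open-dense claim, I would compute the Kronecker canonical form of the pencil $\lambda Y_1+\mu Y_2$ for $T_0:=((E_m,O);(O,E_m))$. Set $d:=n-m$ and write $m=qd+r$ with $0\le r<d$; the pencil has $\lambda$ at each $(i,i)$ and $\mu$ at each $(i,i+d)$. Tracing the alternating $\lambda$- and $\mu$-edges from column $c\in\{1,\dots,d\}$, each chain visits rows $c,c+d,\dots,c+\lfloor(m-c)/d\rfloor d$ and columns $c,c+d,\dots,c+(\lfloor(m-c)/d\rfloor+1)d$. This gives $r$ blocks $L_{q+1}$ (for $c\le r$) and $d-r$ blocks $L_q$ (for $c>r$); a simultaneous row-column permutation then displays the pencil as their direct sum. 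These right minimal indices --- as equal as possible --- are precisely the generic Kronecker invariants for $m\times n$ pencils with $m<n$. By Kronecker's classification theorem, the orbit of $T_0$ therefore equals the Zariski-open semi-algebraic subset of $\RRR^{m\times n\times 2}$ consisting of pencils with this Kronecker type, providing the required $O$.

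To verify $\rank T_0=\min(n,2m)$, I would establish matching bounds. A direct inspection of the axis-$2$ flattening of $T_0$ (whose nonzero columns are exactly $e_1,\dots,e_m$ and $e_{d+1},\dots,e_n$ in $\RRR^n$) shows matrix rank $\min(n,2m)$, supplying the lower bound. For the upper bound, if $n\ge 2m$ the two slices are supported on disjoint columns and $T_0$ is literally a sum of $2m$ standard rank-one tensors. If $n<2m$, decompose $T_0$ along the Kronecker blocks and treat each block $L_\epsilon=((E_\epsilon,O);(O,E_\epsilon))$ separately: choosing distinct scalars $t_1,\dots,t_{\epsilon+1}$, the Vandermonde vectors $\tilde v_i=(1,t_i,\dots,t_i^\epsilon)^\top$ are generalized eigenvectors in the sense $Y_2\tilde v_i=t_iY_1\tilde v_i$, so setting $u_i=Y_1\tilde v_i=(1,t_i,\dots,t_i^{\epsilon-1})^\top$, $w_i=(1,t_i)^\top$, and letting $v_i$ be the dual basis of $\{\tilde v_j\}$, one checks directly from $U(\mathrm{Diag}(1,\dots,1))V^\top=Y_1$ and $U\mathrm{Diag}(t_1,\dots,t_{\epsilon+1})V^\top=Y_2$ that $\sum_iu_i\otimes v_i\otimes w_i=L_\epsilon$. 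Summing over the blocks gives $\sum_i(\epsilon_i+1)=m+d=n$ rank-one terms. The main obstacle is bookkeeping the Kronecker-chain structure to pin down the block sizes and carrying out the Vandermonde/dual-basis decomposition correctly; once these classical ingredients are in place the two bounds meet and the theorem follows.
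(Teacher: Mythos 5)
Your argument is correct, and there is nothing in the paper to compare it against: this theorem is quoted from \cite{Miyazaki-etal:2009} and the paper supplies no proof of it. Your route via the Kronecker canonical form --- identifying the dense orbit as the pencil with minimal indices as equal as possible, checking that $((E_m,O);(O,E_m))$ realizes it through the column-chain decomposition into $r$ blocks $L_{q+1}$ and $d-r$ blocks $L_q$, and then bounding the rank below by the $n\times 2m$ flattening and above by the blockwise Vandermonde decomposition $Y_k=U\diag(t_i^{k-1})V^{-1}$ --- is exactly the standard argument behind the cited result (cf.\ also \cite{Sumi-etal:2009}), and all the bookkeeping you flag as the main obstacle checks out.
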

Furthermore, by Proposition~\ref{prop:compare}, 
$\typicalrankR(m,m,2)$ is equal to either $\{m\}$ or $\{m,m+1\}$.
Let $U$ be an open subset of $\RRR^{m\times m\times 2}$ consisting of $(A;B)$ such that $A$ is an $m\times m$ nonsingular matrix 
and all eigenvalues of $A^{-1}B$ are distinct and contain non-real numbers.
For $m\geq 2$, the set $U$ is not empty and any tensor of $U$ has rank $m+1$
(cf. \cite[Theorem~4.6]{Sumi-etal:2009})
and therefore $\typicalrankR(m,m,2)=\{m,m+1\}$ by Proposition~\ref{prop:open}.

\begin{thm}[{\cite[Result 2]{tenBerge:2000}}] \label{thm:tall}
Let $m,n,\ell \in\mathbb{N}$ with $3\leq m \leq n \leq u$. 
If $(m-1)n <u <mn$,
then typical rank of\/ $\RRR^{m\times n\times u}$ is only one integer $u$.
\end{thm}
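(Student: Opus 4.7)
The plan is to establish both $\grank(m,n,u) \ge u$ and $\mtrank(m,n,u) \le u$; since $\typicalrankR(m,n,u)$ is the integer interval $[\grank(m,n,u), \mtrank(m,n,u)]$, this will force $\typicalrankR(m,n,u) = \{u\}$.

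For the lower bound, the key observation is that the dimension of the slice span $\mathrm{span}(Y_1,\ldots,Y_u) \subseteq \RRR^{m\times n}$ bounds $\rank\, T$ from below: in any decomposition $T = \sum_{r=1}^R a_r \otimes b_r \otimes c_r$ each frontal slice is $Y_k = \sum_r c_{r,k}\, a_r b_r^\top$, hence lies in the span of $R$ rank-one matrices. Since $u \le mn$, a Zariski-dense subset of $\RRR^{m\times n\times u}$ has linearly independent slices, so $\rank\, T \ge u$ there and $\grank(m,n,u) \ge u$.

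For the upper bound, Proposition~\ref{prop:diagonaldecomp} reduces $\rank\, T \le u$ to the existence of $P \in \RRR^{m\times u}$, $Q \in \RRR^{u\times n}$, and diagonal $D_1,\ldots,D_u \in \RRR^{u\times u}$ with $Y_k = PD_kQ$. Letting $a_\ell$ and $b_\ell$ denote the $\ell$-th column of $P$ and row of $Q$, this is equivalent to the slice span $V = \mathrm{span}(Y_1,\ldots,Y_u)$ containing $u$ linearly independent rank-one matrices $a_\ell b_\ell^\top$. I would prove this for an open set of tensors by exhibiting a specific example with full-rank differential: since $u < mn$, fix distinct pairs $(i_\ell, j_\ell) \in [m]\times[n]$ for $\ell = 1,\ldots,u$, and set $T_0 = \sum_{\ell=1}^u e_{i_\ell} \otimes e_{j_\ell} \otimes e_\ell$. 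Then verify that the summation map $f_u \colon (\RRR^m \times \RRR^n \times \RRR^u)^u \to \RRR^{m\times n\times u}$ has Jacobian of full rank $mnu$ at the parameter point $((e_{i_\ell}, e_{j_\ell}, e_\ell))_\ell$; the necessary parameter-count inequality $u(m+n+u) \ge mnu$ is guaranteed by the hypothesis $u > (m-1)n$. The implicit function theorem then produces a Euclidean-open neighborhood of $T_0$ consisting of rank-$\le u$ tensors, and Proposition~\ref{prop:open} yields $\mtrank(m,n,u) \le u$.

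The main obstacle is the full-rank verification of the Jacobian at $T_0$. The image of the differential at this point is spanned by tensors of the form $e_a \otimes e_{j_\ell} \otimes e_\ell$, $e_{i_\ell} \otimes e_b \otimes e_\ell$, and $e_{i_\ell} \otimes e_{j_\ell} \otimes e_c$, so a careful choice of the pairs $(i_\ell, j_\ell)$ together with a combinatorial index-counting argument is required to show these span the whole $mnu$-dimensional target. In algebraic-geometric language, this is the non-defectivity of the $u$-th secant variety of the Segre embedding $\mathbb{P}^{m-1} \times \mathbb{P}^{n-1} \times \mathbb{P}^{u-1} \hookrightarrow \mathbb{P}^{mnu-1}$ in the range $(m-1)n < u < mn$, a classical fact about non-defective Segre varieties in the unbalanced range.
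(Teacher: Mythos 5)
Your lower bound is fine, and in fact more self-contained than the paper's: the slice-span argument correctly gives $\grank(m,n,u)\ge u$, whereas the paper simply quotes the value of the complex generic rank. The gap is in the upper bound. Surjectivity of the differential of $f_u$ at one parameter point gives, via the implicit function theorem, a single Euclidean-open neighborhood of $T_0$ contained in the image of $f_u$. By Proposition~\ref{prop:open} this shows that $u$ \emph{is a} typical rank, i.e.\ $u\in[\grank(m,n,u),\mtrank(m,n,u)]$, which together with your lower bound yields $\grank(m,n,u)=u$. It does \emph{not} yield $\mtrank(m,n,u)\le u$. By Theorem~\ref{thm:Friedland}(4), bounding $\mtrank$ from above requires the image of $f_u$ over $\RRR$ to be \emph{dense} in $\RRR^{m\times n\times u}$, and a real polynomial map whose differential is surjective somewhere (even generically) need not have dense image. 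This is precisely the phenomenon the theorem is about: for $n\times n\times 2$ tensors the differential of $f_n$ is generically surjective (the complex generic rank is $n$), yet the rank-$\le n$ locus is not dense and $n+1$ is also a typical rank. Your argument, applied verbatim there, would ``prove'' $\mtrank(n,n,2)=n$, which is false. So the actual content of the theorem --- ruling out $u+1$ as a typical rank --- is not addressed.

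The paper closes exactly this gap by exhibiting density explicitly: for every tensor $(A_1;\dots;A_m)$ in the Zariski-open \emph{dense} set $S=\{h\ne 0\}$ it constructs a nonsingular matrix $H$ (whose columns are the vectors $Y_j^\perp$ built from the stacked slices in \eqref{eq:XjYj}) with $A_k=A_1H\diag(1,2^{k-1},\dots,u^{k-1})H^{-1}$ for all $k$, so Proposition~\ref{prop:diagonaldecomp} gives rank $\le u$ on all of $S$; density of $S$ then forces $\mtrank(m,n,u)\le u$. If you want to retain your Terracini-style local computation, you must supplement it with a separate proof that the rank-$\le u$ locus is dense over $\RRR$ --- for instance an explicit rational decomposition valid on a Zariski-open set, as in the paper.
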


Ten Berge showed it by applying Fisher's
result \cite[Theorem 5.A.2]{Fisher:1966} for a map defined by
using the Moore-Penrose inverse. 
However the Moore-Penrose inverse is not continuous on the set of matrices
and thus not analytic.
So, until this section, we give another proof for reader's convenience.

Let $3\leq m\leq n$, $p=(m-1)n$, $p<u<mn$ and $q=u-p-1$.
For $W\in M(n-1,n;\RRR)$, the set of $(n-1)\times n$ matrices,
we define a vector 
$W^\perp=(a_1,\ldots,a_{n})^\top$ in $\RRR^{n}$
by
$$a_j=(-1)^{n+j} |W_{[j]}|$$
for $j=1,\ldots,n$, where
$W_{[j]}$ is an $(n-1)\times (n-1)$ matrix obtained from $W$
by removing the $j$-th column.

The following properties are easily shown.

\begin{enumerate}
\item $W^\perp=\zerovec$ if and only if $\rank\, W<n-1$.
\item $WW^\perp=\zerovec$.
\end{enumerate}

\medskip

Let $A_k$ be an $n\times u$ matrix for $1\leq k\leq m$.
Let $B_j$ be a $q\times u$ matrix defined by 
$(O_{p+1},E_q)$ for $j\leq p+1$, and
by $(O_{p},\eee_{j-p-1},\diag(E_{j-p-2},0,E_{u-j}))$
for $p+2\leq j\leq u$, where $E_k$ is the $k\times k$ identity matrix and
$\eee_j$ is the $j$-th column of
the identity matrix with suitable size.
Put 
\begin{equation}
\label{eq:XjYj}
X_j=\begin{pmatrix} A_2-jA_1\\ A_3-j^2A_1\\ \vdots\\ A_{m}-j^{m-1}A_1
\end{pmatrix} \text{ and }
Y_j=\begin{pmatrix} X_j\\ B_j\end{pmatrix}
\end{equation}
for $1\leq j\leq u$, and
\begin{equation}
\label{eq:H}
H=(Y_1^\perp,\ldots,Y_{u}^\perp).
\end{equation}
We define a polynomial $h$ on $\RRR^{n\times u\times m}$ by
$$
h(A_1;A_2;\dots;A_{m})=|H|.
$$

We show that the polynomial $h(A_1;A_2;\ldots;A_m)$ is not zero.
It suffices to show that $h(A_1;A_2;\ldots;A_m)\ne 0$ for some
tensor $(A_1;A_2;\ldots;A_m)$.
We prepare a lemma.

Let $f(a_1,\ldots,a_{m-1},b)=
\left|\begin{matrix}a_1-b&\cdots&a_{m-1}-b\\
a_1^2-b^2&\cdots&a_{m-1}^2-b^2\\ \vdots&&\vdots\\
a_1^{m-1}-b^{m-1}&\cdots&a_{m-1}^{m-1}-b^{m-1}\end{matrix}\right|$.

\begin{lemma} \label{lem:Vandermond}
If $a_1,\ldots,a_{m-1},b$ are distinct eath other, then
$f(a_1,\ldots,a_{m-1},b)\ne 0$.
\end{lemma}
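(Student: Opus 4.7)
The plan is to relate $f(a_1,\ldots,a_{m-1},b)$ directly to an $m\times m$ Vandermonde determinant through a single column operation, so that the hypothesis of distinctness gives nonvanishing automatically. Concretely, I would start from the standard Vandermonde matrix
$$V=\begin{pmatrix} 1 & 1 & \cdots & 1 & 1\\ a_1 & a_2 & \cdots & a_{m-1} & b\\ a_1^2 & a_2^2 & \cdots & a_{m-1}^2 & b^2\\ \vdots & & & \vdots & \vdots\\ a_1^{m-1} & a_2^{m-1} & \cdots & a_{m-1}^{m-1} & b^{m-1}\end{pmatrix},$$
whose determinant equals $\prod_{1\leq i<j\leq m-1}(a_j-a_i)\cdot\prod_{i=1}^{m-1}(b-a_i)$ and is therefore nonzero under the hypothesis that $a_1,\ldots,a_{m-1},b$ are pairwise distinct.

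Next I would perform the column operation of subtracting the last column of $V$ from each of its first $m-1$ columns; this does not change $|V|$. After the operation, the first row becomes $(0,0,\ldots,0,1)$, while the $(k+1)$-st row becomes $(a_1^k-b^k,\ldots,a_{m-1}^k-b^k,b^k)$ for $k=1,\ldots,m-1$. Cofactor expansion along the first row then yields
$$|V|=(-1)^{m+1}\,f(a_1,\ldots,a_{m-1},b),$$
because the $(1,m)$-minor is precisely the determinant defining $f$.

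Combining the two expressions for $|V|$ gives
$$f(a_1,\ldots,a_{m-1},b)=(-1)^{m+1}\prod_{1\leq i<j\leq m-1}(a_j-a_i)\cdot\prod_{i=1}^{m-1}(b-a_i),$$
which is visibly nonzero when $a_1,\ldots,a_{m-1},b$ are distinct. I do not foresee any obstacle here: once the column reduction is chosen correctly, the entire lemma reduces to a known Vandermonde evaluation, so the only thing to check carefully is the sign of the cofactor expansion, and that is purely bookkeeping.
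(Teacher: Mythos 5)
Your proof is correct and is essentially the paper's argument run in reverse: the paper borders the matrix defining $f$ and adds the column $(1,b,\ldots,b^{m-1})^\top$ to the others to reach the full Vandermonde determinant, while you start from the Vandermonde determinant and subtract the $b$-column to recover $f$ (with the sign $(-1)^{m+1}$ correctly accounted for). Both reduce the lemma to the standard Vandermonde evaluation, so there is nothing to add.
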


\begin{proof}
It is easy to see that
\begin{equation*}
\begin{split}
f(a_1,\ldots,a_{m},b)&=
\left|\begin{matrix}
1&0&\cdots&0\\
b&a_1-b&\cdots&a_{m-1}-b\\
b^2&a_1^2-b^2&\cdots&a_{m-1}^2-b^2\\ 
\vdots&\vdots&&\vdots\\
b^{m-1}&a_1^{m-1}-b^{m-1}&\cdots&a_{m-1}^{m-1}-b^{m-1}\end{matrix}\right| \\[2mm]
&=
\left|\begin{matrix}
1&1&\cdots&1\\
b&a_1&\cdots&a_{m-1}\\
b^2&a_1^2&\cdots&a_{m-1}^2\\ 
\vdots&\vdots&&\vdots\\
b^{m-1}&a_1^{m-1}&\cdots&a_{m-1}^{m-1}\end{matrix}\right| \ne 0.\\
\end{split}
\end{equation*}
\qed
\end{proof}

\begin{lemma}
Let $\vvv=(1,\ldots,1)^T\in \RRR^{n}$,
$A_1=(E_{n},\ldots,E_{n},\vvv,O_q)$ and
$$A_{s+1}=(A_1\eee_1,2^sA_1\eee_2,\ldots,u^sA_1\eee_{u})
=A_1\diag(1^s,2^s,\ldots,u^s)$$
for $1\leq s\leq m-1$.
Then the $(u-1)\times u$ matrix $Y_j$ defined in \eqref{eq:XjYj} satisfies
that $Y_j^\perp=t_j\eee_j$ for some $t_j\ne 0$.
In particular, $h(A_1;A_2;\ldots;A_{m})\ne 0$.
\end{lemma}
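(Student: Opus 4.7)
The plan is to establish (a) the $j$-th column of $Y_j$ vanishes, and (b) the remaining $u-1$ columns of $Y_j$ are linearly independent. Together these yield $\ker Y_j = \RRR\eee_j$, so $Y_j^\perp = t_j\eee_j$ for some scalar $t_j$; the $j$-th entry of $Y_j^\perp$ is $(-1)^{u+j}|Y_{j,[j]}|$, which is nonzero by (b), hence $t_j \ne 0$. Then $H = (Y_1^\perp,\ldots,Y_u^\perp) = \diag(t_1,\ldots,t_u)$ and $h(A_1;\ldots;A_m) = |H| = \prod_j t_j \ne 0$, proving the lemma. Part (a) is immediate: $(A_{s+1}-j^sA_1)\eee_j = (j^s-j^s)A_1\eee_j = 0$ kills the $j$-th column of $X_j$, and a direct inspection of $B_j$ (in the two regimes $j\le p+1$ and $j\ge p+2$) gives the same for the bottom block.

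For (b), I would first exploit the bottom block $B_j$. A short index check confirms that the columns of $B_j$ indexed by $k \ne j$ with $k \ge p+2$ (in case $j \le p+1$) or with $k \ge p+1$ (in case $j \ge p+2$) together realize the full standard basis $\eee_1,\ldots,\eee_q$ of $\RRR^q$. Reading any linear relation $\sum_{k\ne j}\alpha_k\cdot(\text{$k$th column of } Y_j) = 0$ in the bottom coordinates thus forces the corresponding $\alpha_k$ to vanish, leaving only $\alpha_k$ with $k \le p$, $k \ne j$, and possibly $\alpha_{p+1}$ (when $j \le p$).

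What remains sits in the top block $X_j$. Reordering its rows by $\ell \in \{1,\ldots,n\}$, row $(t,\ell)$ carries the entry $k^t - j^t$ at each column $k = (i-1)n+\ell$ for $1 \le i \le m-1$, the entry $(p+1)^t - j^t$ at column $p+1$ (the $\vvv$-column, which touches every $\ell$-block), and $0$ elsewhere. Hence for each $\ell$ the surviving equations form an $(m-1) \times (m-1)$ system with coefficient matrix of the form $(k^t - j^t)$, where $k$ runs over $m-1$ distinct values, none equal to $j$. By Lemma~\ref{lem:Vandermond} each such matrix is nonsingular, eliminating the remaining $\alpha_k$.

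The main obstacle is bookkeeping through the three subcases $j \le p$, $j = p+1$, $j \ge p+2$, in particular the coupling of the $\ell$-blocks by $\alpha_{p+1}$ when $j \le p$. In that case, writing $j = (i_0-1)n + \ell_0$, the $\ell = \ell_0$ block is missing the $\alpha_j$ column but includes $\alpha_{p+1}$ in its place, so it remains $(m-1) \times (m-1)$ with node set $\{(i-1)n+\ell_0 : i\ne i_0\}\cup\{p+1\}$, still pairwise distinct and distinct from $j$; Lemma~\ref{lem:Vandermond} makes it nonsingular and pins $\alpha_{p+1} = 0$, after which the remaining $\ell \ne \ell_0$ blocks decouple into independent nonsingular systems handled identically.
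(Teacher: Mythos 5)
Your proof is correct and takes essentially the same route as the paper: the $j$-th column of $Y_j$ vanishes, the block $B_j$ disposes of the columns beyond $p$ (resp.\ beyond $p+1$), and the top block decomposes into the residue-class systems with node sets $S_t$ and, when $j\le p$, the modified set $\{sn+t_0 : s\ne s_0\}\cup\{p+1\}$, each nonsingular by Lemma~\ref{lem:Vandermond}. The only cosmetic difference is that you establish nonsingularity of $(Y_j)_{[j]}$ via linear independence of columns, where the paper permutes to a block-triangular form and multiplies the block determinants.
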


\begin{proof}
Let 
$$D_{t,s,j}=\diag(((t-1)n+1)^s-j^s,((t-1)n+2)^s-j^s,\ldots,(tn)^s-j^s)$$
be an $n\times n$ matrix.
Then
$$
A_{s+1}-j^sA_1=(D_{1,s,j},D_{2,s,j},\ldots,D_{m-1,s,j},((p+1)^s-j^s)\vvv,O_q).
$$
For a $v\times w$ matrix $G=(g_{ij})$, we denote
by 
$$G_{=\{a_1,\ldots,a_c\}}^{=\{b_1,\ldots,b_r\}}$$
the $r\times c$ matrix obtained from $G$
by choosing $a_1$-, $\ldots$, $a_c$-th columns 
and $b_1$-, $\ldots$, $b_r$-th rows,
that is 
$(g_{b_ia_j})$,
and put
$$G_{=\{a_1,\ldots,a_c\}}=G_{=\{a_1,\ldots,a_c\}}^{=\{1,\ldots,v\}}, \quad
G_{\leq c}=G_{=\{1,\ldots,c\}}^{=\{1,\ldots,v\}},\quad
G_{\leq c}^{\leq r}=G_{=\{1,\ldots,c\}}^{=\{1,\ldots,r\}}.
$$

First we suppose that $j>p$.
Put $S_t=\{t,n+t,2n+t,\ldots,(m-2)n+t\}$ and 
$M_{j,t}=(Y_j)_{=S_t}^{=S_t}=(X_j)_{=S_t}^{=S_t}$.
Note that $M_{j,t}$ is nonsingular by Lemma~\ref{lem:Vandermond},
since 
$$
|M_{j,t}|=f(t,n+t,2n+t,\ldots,(m-2)n+t,j).
$$
We consider the $p\times p$ matrix 
$(Y_j)_{\leq p}^{\leq p}=(X_j)_{\leq p}$.
There exists a permutation matrix $P$ such that
$$
P^{-1}(X_j)_{\leq p}P
=\diag(M_{j,1},M_{j,2},\ldots,M_{j,n}).
$$
Thus we get
$$
|(X_j)_{\leq p}|
  =\prod_{1\leq t\leq m-1} |M_{j,t}|
$$
which implies that $(X_j)_{\leq p}$ is nonsingular.
Thus $\rank\, Y_j=u-1$ and $Y_j^\perp=t_j\eee_j$ for some $t_j\ne0$,
since the $j$-th column vector of $Y_j$ is zero.
\par
Next suppose that $j\leq p$.
The $j$-th column of $Y_j$ is zero.
Let 
$$Z_j=(X_j)_{=\{1,\ldots,p+1\}\smallsetminus\{j\}}$$
be the $p\times p$ matrix obtain from 
$(X_j)_{\leq p+1}$ by removing the $j$-th column.
It suffices to show that $\rank\, Z_j=p$.
We express $j$ uniquely by $ns_0+t_0$ for a pair $(s_0,t_0)$ of integers 
with $0\leq s_0\leq m-2$ and $1\leq t_0\leq n$.
Let 
$$T=\{sn+t_0\mid 0\leq s\leq m-2, s\ne s_0\}\cup\{p+1\}.$$
There exist permutation matrices $P$ and $Q$ such that
$$PZ_jQ
=\begin{pmatrix} 
  \stackrel{\vbox to 4pt{\hbox{\normalsize$\diag$}}}
    {\lower0.5ex\vbox to 0pt{
       \hbox{\scriptsize$\substack{1\leq t\leq n, \\t\ne t_0}$}}} M_{j,t} &
  \begin{matrix} O_{p-m+1,m-2} & *\end{matrix}\\[4mm]
  O_{m-1,p-m+1} & (X_j)_{=T}^{=S_{t_0}} \end{pmatrix}$$
of which last column corresponds to the $(p+1)$-th column of $X_j$.
We get the equality
$$
|Z_j|=(-1)^a|(X_j)_{=T}^{=S_{t_0}}|
\prod_{1\leq t\leq m-1, t\ne t_0} |M_{j,t}|.
$$
Again by Lemma~\ref{lem:Vandermond}, 
$Z_j$ is nonsingular and $Y_j^\perp=t_j\eee_j$ for some $t_j\ne 0$.
\qed
\end{proof}
\medskip

Thus the polynomial $h$ is not zero.
Consider a nonempty Zariski open set
$$S=\{(A_1;A_2;\ldots;A_{m}) \in \RRR^{n\times u\times m} \mid 
  h(A_1;A_2;\ldots;A_{m})\ne 0\}.$$
Note that the closure $\overline{S}$ of $S$ is equal to 
$\RRR^{n\times u\times m}$.
For $(A_1;A_2;\ldots;A_{m})\in S$ and $X_j,Y_j, H$ matrices
given in \eqref{eq:XjYj} and \eqref{eq:H},
$A_kY_j^\perp=j^{k-1}A_1Y_j^\perp$ for $1\leq k\leq m$ and 
$1\leq j\leq u$.
Since 
\begin{equation*}
\begin{split}
A_kH &=(A_kY_1^\perp,A_kY_2^\perp,\ldots,A_kY_u^\perp) \\
&=
(A_1Y_1^\perp,2^{k-1}A_1Y_2^\perp,\ldots,u^{k-1}A_1Y_u^\perp) \\
&=
A_1H\diag(1,2^{k-1},\ldots,u^{k-1}),
\end{split}
\end{equation*}
it holds that $A_k=A_1H\diag(1,2^{k-1},\ldots,u^{k-1})H^{-1}$
for each $k$.
By Proposition~\ref{prop:diagonaldecomp}, we get 
$\rank(A_1;A_2;\ldots;A_{m})\leq u$.
Any number of $\typicalrankR(m,u,n)$ is greater than or equal to 
$u$ which is equal to the generic rank of $\CCC^{m\times n\times u}$,
since $(m-1)n<u<mn$.
This completes the proof of Theorem~\ref{thm:tall}.

\begin{cor} \label{cor:mxnx(m-1)n}
Let $3\leq m\leq n$.
Then the set of typical ranks of $m\times n\times (m-1)n$ tensors
is either $\{(m-1)n\}$ or $\{(m-1)n,(m-1)n+1\}$.
\end{cor}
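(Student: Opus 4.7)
The plan is to prove $\grank(m,n,(m-1)n)\geq(m-1)n$, $\mtrank(m,n,(m-1)n)\leq(m-1)n+1$, and that $(m-1)n$ is itself a typical rank. Since $\typicalrankR$ is an interval of consecutive integers, these three facts force it to be $\{(m-1)n\}$ or $\{(m-1)n,(m-1)n+1\}$. Throughout, write $p=(m-1)n$.

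For the upper bound on $\mtrank$, the integer $p+1$ satisfies $p<p+1<mn$ because $n\geq m\geq 3$, so Theorem~\ref{thm:tall} at $u=p+1$ gives $\typicalrankR(m,n,p+1)=\{p+1\}$, hence $\mtrank(m,n,p+1)=p+1$. Proposition~\ref{prop:compare} (with $m_3=p$, $m_4=p+1$) then yields $\mtrank(m,n,p)\leq p+1$. For the lower bound on $\grank$, the setup preceding the statement records that $\rank X\geq p$ for every $X$ in the nonempty Zariski open set $\mathfrak{R}\subseteq\RRR^{n\times p\times m}$. Since $\mathfrak{R}$ is Euclidean dense and rank is preserved under permutation of the three axes (by the proposition cited earlier in the paper), every nonempty open semi-algebraic subset of $\RRR^{m\times n\times p}$ contains a tensor of rank at least $p$, so every typical rank — and in particular $\grank(m,n,p)$ — is $\geq p$.

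The remaining and genuinely harder step is to show that $p$ itself is a typical rank, i.e., that there is a nonempty Euclidean open set of tensors of rank exactly $p$. I would take $Y_1,\ldots,Y_{m-1}\in\RRR^{n\times n}$ simultaneously diagonalizable over $\RRR$ with generic simple joint spectrum, form the corresponding tensor $X\in\mathfrak{R}$ satisfying $X_mH(X)^{-1}=(Y_1,\ldots,Y_{m-1})$, and apply Proposition~\ref{prop:diagonaldecomp} to obtain $\rank X\leq p$; combined with $X\in\mathfrak{R}$ this gives $\rank X=p$. The main obstacle is stability of this rank under small perturbation of $(Y_1,\ldots,Y_{m-1})$, since simultaneous diagonalizability is not itself an open condition; this is essentially the content of the paper's later construction of the open set $\mathfrak{F}\subseteq\mathfrak{C}$ mentioned in the introduction, though for the present corollary a direct Jacobian computation on the parametrization $(P,Q,D_1,\ldots,D_p)\mapsto(PD_1Q;\ldots;PD_pQ)$ would also suffice.
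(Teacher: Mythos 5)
Your first two steps are sound and the first one is exactly the paper's argument: Theorem~\ref{thm:tall} at $u=p+1$ together with Proposition~\ref{prop:compare} gives $\mtrank(m,n,p)\leq p+1$, and the density of $\mathfrak{R}$ (plus invariance of rank under permuting the axes) gives $r_i\geq p$ for every cell $O_i$ of Theorem~\ref{thm:Friedland}, hence $\grank(m,n,p)\geq p$. The gap is in your third step. Because your second step only yields the inequality $\grank(m,n,p)\geq p$, you still have to exclude $\typicalrankR(m,n,p)=\{p+1\}$, i.e.\ you need $\grank(m,n,p)\leq p$, and your argument for this is not closed. The simultaneous-diagonalization construction does produce tensors of rank exactly $p$, but, as you yourself observe, for $m\geq 3$ the simultaneously diagonalizable tuples form a nowhere dense set, so Proposition~\ref{prop:open} does not apply and no typical rank is produced. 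Your two proposed repairs do not fill the hole as stated: invoking the construction of $\mathfrak{F}\subset\mathfrak{C}$ means importing the hardest analysis of the paper (Sections 5--6) to prove a preliminary corollary that the paper wants available beforehand, and the claim that the Jacobian of $(P,Q,D_1,\ldots,D_m)\mapsto(PD_1Q;\ldots;PD_mQ)$ is generically surjective is asserted, not verified --- a parameter count shows surjectivity is not impossible, but it does not prove it.

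The paper avoids this entirely: by Theorem~\ref{thm:Friedland}(3) the minimal typical rank $\min(r_1,\ldots,r_M)$ \emph{equals} the generic rank of $\CCC^{m\times n\times(m-1)n}$, and that complex generic rank is the known quantity $(m-1)n$ (this is the same fact used at the end of the proof of Theorem~\ref{thm:tall}, where the generic rank of $\CCC^{m\times n\times u}$ is identified for $u$ in this range; over $\CCC$ the eigenvalue equations \eqref{eq:lineq} are always generically solvable, which is precisely what fails over $\RRR$ and why your real construction stalls). So the lower end of the interval $[\grank,\mtrank]$ is pinned to $p$ by a citation rather than by exhibiting a real open set of rank-$p$ tensors. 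If you want a self-contained real proof of attainment, you must actually carry out the Jacobian (or an equivalent open-set) argument; as written, the proposal does not rule out the possibility $\typicalrankR(m,n,p)=\{p+1\}$.
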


\begin{proof}
The typical rank of $\RRR^{m\times n\times ((m-1)n+1)}$ is only $(m-1)n+1$ 
by Theorem~\ref{thm:tall} and
the minimal typical rank of $\RRR^{m\times n\times (m-1)n}$ is equal to 
$(m-1)n$, since it is equal to
the generic rank of $\CCC^{m\times n\times (m-1)n}$.
Thus the assertion follows from Proposition~\ref{prop:compare}.
\qed
\end{proof}

\section{Characterization}

From now on, let $3\leq m\leq n$, $\ell=m-1$ and $p=(m-1)n$.
For an $n\times n\times \ell$ tensor $(Y_1;\ldots;Y_\ell)$, 
consider an $n\times p\times m$ tensor 
$X(Y_1,\ldots,Y_\ell)=(X_1;\ldots;X_m)$ given by 
\begin{equation} \label{eq:typicalform}
\begin{pmatrix} X_1 \\ \vdots \\ X_m\end{pmatrix}
=\begin{pmatrix} E_n \\ & E_n \\ &&\ddots \\ &&& E_n \\ Y_1 & Y_2 & \cdots & Y_{\ell} \end{pmatrix}.
\end{equation}
Note that $\rank\, X(Y_1,\ldots,Y_\ell)\geq p$, 
since $\rank\, X(Y_1,\ldots,Y_\ell)$ is greater than or equal to
the rank of the $p\times p$ matrix \eqref{eq:typicalform}.
In generic, an $m\times n\times p$ tensor is equivalent to
a tensor of type as $X(Y_1,\ldots,Y_\ell)$.

We denote by $\mathfrak{M}$ the set of
tensors 
$Y=(Y_1;\ldots;Y_\ell)\in \mathbb{R}^{n\times n\times \ell}$ 
such that
there exist an $m\times p$ matrix $(x_{ij})$ and an $n\times p$ matrix 
$A=(\aaa_1,\ldots,\aaa_p)$ such that
\begin{equation} \label{eq:lineq}
(x_{1j}Y_1+\cdots+x_{m-1,j}Y_{m-1}-x_{mj}E_n)\aaa_j=\zerovec
\end{equation}
for $1\leq j\leq p$
and
\begin{equation} \label{eqn:B}
B:=\begin{pmatrix} AD_1\\ \vdots \\ AD_{\ell}\end{pmatrix}
\end{equation}
is nonsingular, where
$D_k=\diag(x_{k1},\cdots,x_{kp})$ for $1\leq k\leq \ell$.

\begin{lemma} \label{lem:equiv}
$\rank\, X(Y_1,\ldots,Y_\ell)=p$ if and only if 
$(Y_1;\ldots;Y_\ell)\in \mathfrak{M}$.
\end{lemma}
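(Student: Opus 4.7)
The plan is to invoke Proposition~\ref{prop:diagonaldecomp} in order to translate the rank condition on $X(Y_1,\ldots,Y_\ell)$ into the existence of a matrix factorization, and then to identify that factorization with the data $(x_{ij})$ and $A$ that specify membership in $\mathfrak{M}$. Since $\rank\,X(Y_1,\ldots,Y_\ell)\geq p$ always holds, the equality $\rank\,X(Y_1,\ldots,Y_\ell)=p$ is equivalent, by Proposition~\ref{prop:diagonaldecomp}, to the existence of an $n\times p$ matrix $A=(\aaa_1,\ldots,\aaa_p)$, a $p\times p$ matrix $Q$, and $p\times p$ diagonal matrices $D_k=\diag(x_{k1},\ldots,x_{kp})$ for $k=1,\ldots,m$ such that $X_k=AD_kQ$ for every $k$. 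The task therefore reduces to showing that the existence of such $A$, $Q$, $D_k$ is equivalent to the existence of $A$ and $(x_{ij})$ satisfying \eqref{eq:lineq} with $B$ of \eqref{eqn:B} nonsingular.

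For the direction $(\Rightarrow)$, I start from such a factorization and right-multiply by $Q^{-1}$ to get $X_kQ^{-1}=AD_k$. Stacking for $k=1,\ldots,\ell$ yields
$$H(X)Q^{-1}=\begin{pmatrix}AD_1\\ \vdots \\ AD_\ell\end{pmatrix}.$$
By the construction of $X(Y_1,\ldots,Y_\ell)$ we have $H(X)=E_p$, so $Q^{-1}$ coincides with the right-hand side, which is exactly the matrix $B$ of \eqref{eqn:B}; hence $Q$ is invertible and so is $B$. To verify \eqref{eq:lineq}, I compute the $j$-th column of $X_mB$ in two ways. Using $X_m=(Y_1,\ldots,Y_\ell)$ together with the explicit $j$-th column of $B$, namely $(x_{1j}\aaa_j^\top,\ldots,x_{\ell,j}\aaa_j^\top)^\top$, one gets $\sum_{k=1}^{\ell}x_{kj}Y_k\aaa_j$. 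Using $X_mB=AD_mQB=AD_m$ one gets $x_{mj}\aaa_j$. Equating them yields \eqref{eq:lineq}.

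For $(\Leftarrow)$, I reverse the construction: given $(x_{ij})$ and $A=(\aaa_1,\ldots,\aaa_p)$ satisfying \eqref{eq:lineq} with $B$ nonsingular, set $Q:=B^{-1}$ and $D_k:=\diag(x_{k1},\ldots,x_{kp})$. For $k\leq \ell$ the identity $X_kB=AD_k$ is immediate from the block structure of $B$, since $X_k$ selects the $k$-th $n\times p$ block of $B$. For $k=m$ the identity $X_mB=AD_m$ is exactly \eqref{eq:lineq} read column-by-column. Right-multiplying by $Q=B^{-1}$ gives $X_k=AD_kQ$ for every $k$, and Proposition~\ref{prop:diagonaldecomp} then yields $\rank\,X(Y_1,\ldots,Y_\ell)\leq p$, hence equality. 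The whole argument is essentially a bookkeeping exercise once the dictionary ``$A$ = first factor of the Kruskal decomposition'' and ``$B=Q^{-1}$'' is in place; no serious technical obstacle arises beyond keeping the block indexing consistent between the two directions.
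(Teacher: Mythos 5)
Your proof is correct and follows essentially the same route as the paper: invoke Proposition~\ref{prop:diagonaldecomp}, use $H(X)=E_p$ to identify $B$ of \eqref{eqn:B} with $Q^{-1}$, read \eqref{eq:lineq} off column-by-column from $X_mB=AD_m$, and reverse the bookkeeping for the converse (which the paper leaves to the reader). One cosmetic point: obtain $BQ=E_p$, and hence the invertibility of $Q$ and $B$, by stacking $X_k=AD_kQ$ for $k\le\ell$ \emph{before} writing $Q^{-1}$, since as phrased you use $Q^{-1}$ to deduce that $Q$ is invertible.
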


\begin{proof}
Suppose that $\rank X(Y_1,\ldots,Y_\ell)=p$.
There are an $n\times p$ matrix $A$, a $p\times p$ matrix $Q$
and $p\times p$ diagonal matrices $D_i$ such that $X_k=AD_kQ$
for $k=1,\ldots,m$.
Since $$\begin{pmatrix} X_1 \\ \vdots \\ X_\ell\end{pmatrix}=E_p=
\begin{pmatrix} AD_1 \\ \vdots \\ AD_\ell\end{pmatrix}Q,$$
$B$ is nonsingular.
Then 
$(Y_1,\ldots,Y_\ell)B=AD_m$ implies that
$\sum_{k=1}^\ell Y_kAD_k=AD_m$.
Therefore, the $j$-th column vector $\aaa_j$ of $A$
satisfies \eqref{eq:lineq}.
Therefore $(Y_1,\ldots,Y_\ell)\in\mathfrak{M}$.
It is easy to see that the converse is also true.
\qed
\end{proof}

For an $n\times n\times \ell$ tensor $Y=(Y_1;\ldots;Y_\ell)$,
we put 
$$V(Y)=\{\aaa \in \RRR^n \mid \sum_{k=1}^\ell x_kY_k\aaa=x_m\aaa
\text{ for some $(x_1,\ldots,x_m)^\top\ne\zerovec$}\}.$$
The set $V(Y)$ is not a vector subspace of $\RRR^n$.
Let $\hat{V}(Y)$ be the smallest vector subspace of $\RRR^n$ including $V(Y)$.
Let 
$$\mathfrak{S}=\{Y \in \RRR^{n\times n\times \ell} \mid \dim \hat{V}(Y)=n\}.$$ 

\begin{prop} \label{prop:UsubsetS}
$\mathfrak{M}\subset\mathfrak{S}$ holds.
\end{prop}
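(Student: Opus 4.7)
The plan is to show that the columns $\aaa_1,\ldots,\aaa_p$ of the matrix $A$ provided by the definition of $\mathfrak{M}$ already lie in $V(Y)$ and span $\RRR^n$; this will immediately imply $\dim\hat V(Y)=n$, i.e., $Y\in\mathfrak{S}$.

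Assume $Y\in\mathfrak{M}$, so coefficients $(x_{ij})$ and a matrix $A=(\aaa_1,\ldots,\aaa_p)$ satisfying \eqref{eq:lineq} exist with the stacked matrix $B$ of \eqref{eqn:B} nonsingular. The first step is to verify membership $\aaa_j\in V(Y)$ for each $j$. The $j$-th column of $B$ equals $(x_{1j}\aaa_j;\,x_{2j}\aaa_j;\,\ldots;\,x_{\ell j}\aaa_j)$; since $B$ is nonsingular, this column must be nonzero, forcing $\aaa_j\ne\zerovec$ and at least one among $x_{1j},\ldots,x_{\ell j}$ to be nonzero. In particular $(x_{1j},\ldots,x_{mj})^\top\ne\zerovec$, so \eqref{eq:lineq} is precisely the witness that places $\aaa_j$ in $V(Y)$.

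The second and only substantive step is to rule out the possibility that $\aaa_1,\ldots,\aaa_p$ lie in a proper subspace of $\RRR^n$. If they did, there would exist a nonzero $\vvv\in\RRR^n$ with $\vvv^\top A=\zerovec^\top$. I would then form the $1\times p$ row vector $\www=(\vvv^\top,\zerovec^\top,\ldots,\zerovec^\top)$, obtained by placing $\vvv^\top$ in the first $n$-block and zeros in the remaining $\ell-1$ blocks, and compute
\[
\www B=\vvv^\top(AD_1)+\sum_{k=2}^{\ell}\zerovec^\top(AD_k)=(\vvv^\top A)D_1=\zerovec^\top.
\]
Since $\www\ne\zerovec$, this contradicts the nonsingularity of $B$, and therefore $\aaa_1,\ldots,\aaa_p$ must span $\RRR^n$.

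There is essentially no obstacle here: the argument is a direct bookkeeping consequence of the definition of $\mathfrak{M}$, using only the block-row structure of $B$ and the elementary fact that a nonsingular square matrix has trivial left null space. Combining the two steps yields $\hat V(Y)\supset\operatorname{span}(\aaa_1,\ldots,\aaa_p)=\RRR^n$, so $Y\in\mathfrak{S}$ as required.
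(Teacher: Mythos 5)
Your proof is correct and follows essentially the same route as the paper: both arguments observe that each column $\aaa_j$ of $A$ lies in $V(Y)$ (using that a zero column of $B$ is impossible) and that nonsingularity of $B$ forces $A$ to have full row rank $n$, whence $\hat V(Y)=\RRR^n$. The paper phrases the rank step via column operations producing a block form, while you use the left null space of $B$; this is only a presentational difference, and your version is if anything more explicit.
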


\begin{proof}
Let $Y\in \mathfrak{M}$.
Consider the matrix $B$ in \eqref{eqn:B} for any 
$m\times p$ matrix $(x_{ij})$ and any $n\times p$ matrix $A=(\aaa_1,\ldots,\aaa_p)$ satisfying the equation \eqref{eq:lineq}.
By column operations, $B$ is transformed to
a $p\times p$ matrix having a form
$$\begin{pmatrix}
P_{11} & O_{n,p-\dim \hat{V}(Y)} \\
P_{21} & P_{22}
\end{pmatrix}$$
where $P_{11}$ is an $n\times \dim \hat{V}(Y)$ submatrix of $A$.
Since $B$ is nonsingular, $P_{11}$ is also nonsingular, which
implies that $\dim \hat{V}(Y)=n$.
\qed
\end{proof}

By Corollary~\ref{cor:mxnx(m-1)n}, Lemma~\ref{lem:equiv} and Proposition~\ref{prop:UsubsetS}, 
we have the following

\begin{prop} \label{prop:rankX(Y)}
If $\rank\, X(Y)=p$ then $Y\in \mathfrak{S}$.
In particular, 
$\overline{\mathfrak{S}}\ne \RRR^{n\times n\times\ell}$
implies that $\typicalrankR(m,n,p)=\{p,p+1\}$.
\end{prop}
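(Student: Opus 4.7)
The first statement is immediate: by Lemma~\ref{lem:equiv}, $\rank X(Y) = p$ is equivalent to $Y \in \mathfrak{M}$, and Proposition~\ref{prop:UsubsetS} then gives $Y \in \mathfrak{S}$.

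For the ``in particular'' part, the plan is to first assume $\overline{\mathfrak{S}} \ne \RRR^{n \times n \times \ell}$ and set $U := \RRR^{n \times n \times \ell} \setminus \overline{\mathfrak{S}}$, a nonempty Euclidean-open set. The contrapositive of the first assertion yields $\rank X(Y) > p$ for every $Y \in U$. I would then lift this to a nonempty open subset of $\RRR^{n \times p \times m}$ on which the rank exceeds $p$, and finally invoke Theorem~\ref{thm:Friedland} and Corollary~\ref{cor:mxnx(m-1)n} to extract the typical rank $p+1$.

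For the lifting step, introduce the continuous map $\Phi : \mathfrak{R} \to \RRR^{n \times n \times \ell}$ defined by $\Phi(T) := h(T_m H(T)^{-1})$ on the Zariski-open set $\mathfrak{R}$ from the introduction. A direct check shows $\Phi(X(Y)) = Y$, so $\Phi$ is surjective. The identity $\hat{H}(T) H(T)^{-1} = \hat{H}(X(\Phi(T)))$ already recorded in the introduction exhibits $T$ and $X(\Phi(T))$ as $\GL(p)$-equivalent (via right multiplication by $H(T)^{-1}$ on the middle axis), so $\rank T = \rank X(\Phi(T))$ for every $T \in \mathfrak{R}$. Hence $V := \Phi^{-1}(U)$ is a nonempty Euclidean-open subset of $\RRR^{n \times p \times m}$ on which $\rank T > p$.

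To finish, apply Theorem~\ref{thm:Friedland} to $\RRR^{n \times p \times m}$: since $V$ is open and the complement of $\bigcup_i O_i$ has lower dimension, $V$ must meet some $O_i$, so the corresponding rank $r_i$ is both typical and satisfies $r_i > p$. Rank is invariant under transposition of axes, so $r_i$ is also a typical rank of $\RRR^{m \times n \times p}$, and Corollary~\ref{cor:mxnx(m-1)n} forces $r_i = p+1$. Combined with $p \in \typicalrankR(m,n,p)$ (the complex generic rank, which is the minimum of the set of typical ranks), this gives the desired equality $\typicalrankR(m,n,p) = \{p, p+1\}$. The one point requiring care — really the only technical content of the proof, and what I would double-check most carefully — is the equality $\rank T = \rank X(\Phi(T))$ on all of $\mathfrak{R}$: this is the bridge that converts a local geometric statement on $\RRR^{n \times n \times \ell}$ into a typical rank statement on $\RRR^{m \times n \times p}$, and it is not a real obstacle but simply bookkeeping with the $\GL(p)$-action on the middle axis.
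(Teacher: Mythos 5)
Your proof is correct and follows the same route the paper intends: the first assertion is exactly Lemma~\ref{lem:equiv} combined with Proposition~\ref{prop:UsubsetS}, and the ``in particular'' part is the lifting of the nonempty open set $\RRR^{n\times n\times\ell}\smallsetminus\overline{\mathfrak{S}}$ to $\mathfrak{R}$ via the $\GL(p)$-equivalence $T\sim X(\Phi(T))$, followed by Theorem~\ref{thm:Friedland} and Corollary~\ref{cor:mxnx(m-1)n}. The paper states this proposition with only a citation of those three results, so your fleshed-out version (including the check $\rank\,T=\rank\,X(\Phi(T))$ and the use of $\rank\,X(Y)\geq p$ to upgrade the contrapositive to a strict inequality) is precisely the intended argument.
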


\begin{thm}[\cite{Sumi-etal:2010a}] \label{thm:ans}
If $(Y_1;\ldots;Y_\ell;E_n)$ is an absolutely nonsingular tensor, then
it holds that
$\rank\, X(Y_1,\ldots,Y_\ell)>p$.
\end{thm}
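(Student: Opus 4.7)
The plan is to exploit the sequence of reductions established earlier in this section. First, by the observation immediately after~\eqref{eq:typicalform} we already have $\rank X(Y_1,\ldots,Y_\ell)\ge p$, so it suffices to rule out equality. By Lemma~\ref{lem:equiv}, equality is equivalent to $Y=(Y_1;\ldots;Y_\ell)\in\mathfrak{M}$, and by Proposition~\ref{prop:UsubsetS} this would force $Y\in\mathfrak{S}$, i.e., $\dim\hat V(Y)=n$. Thus the goal reduces to showing that $\hat V(Y)$ is a proper subspace of $\RRR^n$; in fact I plan to establish the much stronger statement $V(Y)=\{\zerovec\}$.

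To this end, I would unwind the definition: $\aaa\in V(Y)$ precisely when there exists a nonzero $\xxx=(x_1,\ldots,x_m)^\top\in\RRR^m$ with $\sum_{k=1}^{\ell} x_kY_k\aaa=x_m\aaa$, equivalently $M(\xxx,Y)\aaa=\zerovec$. The hypothesis that $(Y_1;\ldots;Y_\ell;E_n)$ is absolutely nonsingular translates, after reconciling the sign convention in the definition of $M(\xxx,Y)$, to the statement that every nontrivial linear combination of $Y_1,\ldots,Y_\ell,E_n$ is invertible, i.e., $|M(\xxx,Y)|\ne 0$ for every nonzero $\xxx\in\RRR^m$. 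Hence $M(\xxx,Y)$ is invertible for every admissible $\xxx$, forcing $\aaa=\zerovec$. Therefore $V(Y)=\{\zerovec\}$, $\hat V(Y)=\{\zerovec\}$, and so $Y\notin\mathfrak{S}\supset\mathfrak{M}$; combining this with Lemma~\ref{lem:equiv} and the ambient bound $\rank X(Y_1,\ldots,Y_\ell)\ge p$ yields $\rank X(Y_1,\ldots,Y_\ell)>p$.

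Because the three preparatory results (Lemma~\ref{lem:equiv}, Proposition~\ref{prop:UsubsetS}, and the ambient bound) are already in place, there is no genuine technical obstacle in the argument itself. The only step requiring care is the translation between the definition of ``absolutely nonsingular'' imported from \cite{Sumi-etal:2010a} and the condition $|M(\xxx,Y)|\ne 0$ for all nonzero $\xxx$, in particular the role of the last slot $E_n$ and the sign in front of $x_mE_n$; once this dictionary is fixed, the proof collapses into the short chain $\text{abs.\ nonsing.}\Rightarrow V(Y)=\{\zerovec\}\Rightarrow Y\notin\mathfrak{S}\Rightarrow Y\notin\mathfrak{M}\Rightarrow \rank X(Y)>p$.
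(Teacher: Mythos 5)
Your proof is correct: absolute nonsingularity of $(Y;E_n)$ gives $|M(\xxx,Y)|\ne 0$ for all $\xxx\ne\zerovec$ (the sign of $x_mE_n$ is harmless), hence $\hat V(Y)=\{\zerovec\}$, so $Y\notin\mathfrak{S}\supset\mathfrak{M}$ and Lemma~\ref{lem:equiv} forces $\rank X(Y)>p$. The paper itself imports this theorem from \cite{Sumi-etal:2010a} without reproving it, but the route you take is precisely the one encoded in the paper's own machinery (Proposition~\ref{prop:dimV(Y)=0} together with Propositions~\ref{prop:C1capU} and \ref{prop:rankX(Y)}), so this is essentially the same argument.
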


Here $(Y_1;\ldots;Y_\ell;Y_m)$ is called an absolutely nonsingular
tensor
if $|\sum_{k=1}^m x_kY_k|=0$ implies $(x_1,\ldots,x_m)^\top=\zerovec$.
Therefore,

\begin{prop} \label{prop:dimV(Y)=0}
$\dim \hat{V}(Y)=0$ if and only if 
$(Y;E_n)$ is an $n\times n\times m$ absolutely nonsingular tensor.
\end{prop}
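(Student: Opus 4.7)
The plan is to recognize that the proposition is essentially an unwinding of definitions, where the only subtlety is handling the zero vector correctly and matching the sign convention between $V(Y)$ and the matrix $M(\aaa,Y)=\sum_{k=1}^{m-1}x_kY_k - x_mE_n$ used earlier. First I would observe that $\zerovec\in V(Y)$ always: taking $\xxx=(x_1,\ldots,x_m)^\top=(0,\ldots,0,1)\ne\zerovec$, the defining equation $\sum_{k=1}^{\ell}x_kY_k\aaa=x_m\aaa$ holds trivially for $\aaa=\zerovec$. Consequently $V(Y)$ is never empty, and $\dim\hat V(Y)=0$ is equivalent to $V(Y)=\{\zerovec\}$, i.e.\ to the nonexistence of any nonzero $\aaa\in\RRR^n$ lying in $V(Y)$.

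Next I would rewrite the defining condition of $V(Y)$ in matrix form: $\aaa\in V(Y)$ is precisely the existence of $\xxx=(x_1,\ldots,x_m)^\top\ne\zerovec$ with $M(\xxx,Y)\aaa=\zerovec$, where I replace $x_m$ by $-x_m$ so as to match the sign convention of $M$ (this substitution is a bijection on the set of nonzero $\xxx$, so the condition is unchanged). Therefore $V(Y)\ne\{\zerovec\}$ if and only if there exist $\xxx\ne\zerovec$ and $\aaa\ne\zerovec$ with $M(\xxx,Y)\aaa=\zerovec$, which is the same as saying that $M(\xxx,Y)$ is singular for some $\xxx\ne\zerovec$.

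Finally, by the very definition recalled just after Theorem~\ref{thm:ans}, the tensor $(Y;E_n)$ is absolutely nonsingular iff $|M(\xxx,Y)|=0$ forces $\xxx=\zerovec$, i.e.\ iff $M(\xxx,Y)$ is nonsingular for every $\xxx\ne\zerovec$. This is exactly the negation of the condition obtained in the previous step, so $V(Y)=\{\zerovec\}$ if and only if $(Y;E_n)$ is absolutely nonsingular, completing the equivalence. There is no real obstacle here; the only things to be careful about are the asymmetric treatment of $\zerovec\in V(Y)$ and the sign change absorbed into $x_m$.
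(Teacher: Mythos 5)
Your argument is correct and is exactly the definitional unwinding the paper itself relies on: the paper offers no separate proof of Proposition~\ref{prop:dimV(Y)=0}, treating the equivalence of ``$V(Y)$ contains no nonzero vector'' with the nonsingularity of $M(\xxx,Y)$ for all $\xxx\ne\zerovec$ (equivalently, absolute nonsingularity of $(Y;E_n)$ after the sign flip in $x_m$) as immediate. Your only slip is cosmetic: the condition $\sum_{k=1}^{\ell}x_kY_k\aaa=x_m\aaa$ is already $M(\xxx,Y)\aaa=\zerovec$ with no sign change, the substitution $x_m\mapsto -x_m$ being needed only when comparing $|M(\xxx,Y)|$ with the absolute-nonsingularity condition $|\sum_{k=1}^{\ell}x_kY_k+x_mE_n|$, and since you note it is a bijection on nonzero $\xxx$ the proof stands.
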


Note that there exists an $n\times n\times m$ absolutely
nonsingular tensor if and only if $m$ is less than or equal to 
the Hurwitz-Radon number $\rho(n)$ \cite{Sumi-etal:2010a}.

\begin{prop}
Let $Y$ and $Z$ be $n\times n\times m$ tensors.
Suppose $(P,Q,R)\cdot Y=Z$ for $(P,Q,R)\in \GL(n)\times \GL(n)\times \GL(m)$.
Then $V(Y)=Q^\top V(Z)=\{Q^\top\yyy \mid \yyy\in V(Z)\}$.
In particular, $\dim \hat{V}(Z)=\dim \hat{V}(Y)$.
\end{prop}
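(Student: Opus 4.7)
The plan is to unfold the definition of the $\GL(n)\times\GL(n)\times\GL(m)$-action in coordinates and read off a bijection between $V(Z)$ and $V(Y)$ given by $\aaa\mapsto Q^\top\aaa$, with the coefficient vector $\xxx$ transforming by $R^\top$. No deep tool is needed; the content is bookkeeping with the action formula.

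Concretely, writing $R=(r_{ij})$, the hypothesis $Z=(P,Q,R)\cdot Y$ gives $Z_i=\sum_{k=1}^m r_{ik}\,PY_kQ^\top$ for each $i$, so for any $\xxx=(x_1,\ldots,x_m)^\top\in\RRR^m$ one computes
$$\sum_{i=1}^m x_i Z_i \;=\; P\Bigl(\sum_{k=1}^m y_k Y_k\Bigr) Q^\top,\qquad \yyy := R^\top \xxx.$$
Because $P$ is invertible, the equation $\bigl(\sum_i x_i Z_i\bigr)\aaa=\zerovec$ is equivalent to $\bigl(\sum_k y_k Y_k\bigr)(Q^\top\aaa)=\zerovec$; and because $R^\top$ is invertible, $\xxx$ is nonzero if and only if $\yyy$ is nonzero. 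Combining these two equivalences yields $\aaa\in V(Z)$ iff $Q^\top\aaa\in V(Y)$, which is exactly the claim $V(Y)=Q^\top V(Z)$.

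For the final assertion, since $Q^\top$ is a linear automorphism of $\RRR^n$, it carries the linear span of $V(Z)$ onto the linear span of $Q^\top V(Z)=V(Y)$, so $\hat V(Y)=Q^\top \hat V(Z)$ and in particular $\dim\hat V(Y)=\dim\hat V(Z)$. There is no genuine obstacle to the argument; the only step warranting mild care is the transpose convention, so that the coefficient substitution is by $R^\top$ rather than $R$ (this arises because in the action formula $r_{ik}$ multiplies the $k$-th input slice $Y_k$ when producing the $i$-th output slice $Z_i$).
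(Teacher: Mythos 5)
Your proof is correct and takes essentially the same route as the paper's: unfold the action so that the coefficient vector transforms by $R^\top$, then factor out the invertible $P$ and $Q^\top$. The paper writes out only one inclusion explicitly; your two-way equivalence (with the explicit remark about the transpose convention on $R$) is the same computation carried out slightly more carefully.
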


\begin{proof}
Suppose that $\sum_{k=1}^m x_kZ_k\yyy=\zerovec$.
Then from the definition of the action, it follows that
$$\sum_{k=1}^m d_k\sum_{u=1}^m r_{ku}PY_uQ^\top\yyy=
P(\sum_{u=1}^m (\sum_{k=1}^m d_kr_{ku}Y_u))Q^\top\yyy=\zerovec.$$
Thus $Q^\top\yyy \in V(Y)$.
\qed
\end{proof}


\begin{cor}
$\mathfrak{S}$ is closed under the equivalence relation.
\end{cor}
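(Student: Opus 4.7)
The plan is to apply the preceding proposition directly. That result shows that $\dim\hat V$ is invariant under the $\GL(n)\times\GL(n)\times\GL(m)$-equivalence of $n\times n\times m$ tensors, and the corollary asks for the analogous invariance of the set $\mathfrak S\subset\RRR^{n\times n\times\ell}$.

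The bridge between the $n\times n\times\ell$ setting of $\mathfrak S$ and the $n\times n\times m$ setting of the preceding proposition is the identification $Y=(Y_1;\ldots;Y_\ell)\longleftrightarrow \tilde Y:=(Y_1;\ldots;Y_\ell;-E_n)$: since the defining condition $\sum_{k=1}^\ell x_kY_k\aaa=x_m\aaa$ of $V(Y)$ is the same as $\sum_{k=1}^m x_k\tilde Y_k\aaa=\zerovec$ for the $n\times n\times m$ version of $V(\tilde Y)$, we have $V(Y)=V(\tilde Y)$ and $\dim\hat V(Y)=\dim\hat V(\tilde Y)$. Hence $Y\in\mathfrak S$ iff $\dim\hat V(\tilde Y)=n$, reducing the corollary to a statement about $n\times n\times m$ equivalence of tensors of the form $(Y;-E_n)$.

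Given an equivalence $Z=(P,Q,R)\cdot Y$ with $(P,Q,R)\in\GL(n)\times\GL(n)\times\GL(\ell)$, I would lift it to $(P,Q,R')\in\GL(n)\times\GL(n)\times\GL(m)$ with $R'=\mathrm{diag}(R,1)$, so that $(P,Q,R')\cdot\tilde Y=(Z_1;\ldots;Z_\ell;-PQ^\top)$. By the preceding proposition, $\dim\hat V(\tilde Y)=\dim\hat V((Z;-PQ^\top))$. A further $\GL(n)\times\GL(n)\times\GL(m)$-equivalence that rescales the last slice from $-PQ^\top$ back to $-E_n$ would then equate $\dim\hat V((Z;-PQ^\top))$ with $\dim\hat V(\tilde Z)$, again by the preceding proposition, completing the chain $\dim\hat V(Y)=\dim\hat V(Z)$. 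The main obstacle is constructing the normalizing equivalence in this second step so that it actually lands in the $\GL(n)\times\GL(n)\times\GL(m)$-orbit of $\tilde Z$: a direct compensating action that restores $-E_n$ while fixing the first $\ell$ slices generally requires $PQ^\top$ to lie in a specific subspace, but since $\dim\hat V$ is an invariant of the full orbit, it suffices to reach \emph{any} orbit representative of $\tilde Z$, which can be arranged by an additional $\GL(n)\times\GL(n)$-action absorbed without affecting $\dim\hat V$.
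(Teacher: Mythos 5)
Your reduction via $Y\mapsto\tilde Y=(Y;-E_n)$, the identity $V(Y)=V(\tilde Y)$, and the computation that the lifted action carries $\tilde Y$ to $(Z;-PQ^\top)$ are all correct. The gap is the final normalization step, and it is not a technicality: any $(P',Q')\in\GL(n)\times\GL(n)$ that restores the last slice to $-E_n$ forces $Q'^\top=(P'PQ^\top)^{-1}$ and hence replaces the first $\ell$ slices by $P'Z_kQ'^\top$ — a tensor that is again only $\GL(n)\times\GL(n)\times\GL(\ell)$-equivalent to $Z$. So "reaching some orbit representative of $\tilde Z$" is exactly the assertion you are trying to prove; the argument is circular. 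There is in fact no reason for $(Z;-PQ^\top)$ and $(Z;-E_n)$ to lie in the same $\GL(n)\times\GL(n)\times\GL(m)$-orbit, and in general they do not.

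Worse, under your reading of "the equivalence relation" (the full $\GL(n)\times\GL(n)\times\GL(\ell)$-action on $\RRR^{n\times n\times\ell}$) the statement is false, so no repair along these lines is possible. Take $n=4$, $m=3$, $\ell=2$, and let $A_1,A_2$ be the matrices of left multiplication by $i,j$ on the quaternions, so that $|x_1A_1+x_2A_2-x_3E_4|=(x_1^2+x_2^2+x_3^2)^2$. Then $Z=(A_1;A_2)\in\mathfrak{A}$, hence $V(Z)=\{\zerovec\}$ and $Z\notin\mathfrak{S}$ by Proposition~\ref{prop:dimV(Y)=0}; but $Y=(A_1^{-1},E_4,E_2)\cdot Z=(E_4;A_1^{-1}A_2)$ satisfies $V(Y)=\RRR^4$ (take $x_1=x_m=1$, $x_2=0$ in \eqref{eq:lineq}-type equation $\sum x_kY_k\aaa=x_m\aaa$), so $Y\in\mathfrak{S}$. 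Indeed, every element of $\mathfrak{A}$ is $\ell$-slice-equivalent to a tensor with an identity slice, hence to an element of $\mathfrak{S}$. The equivalence intended in the corollary is therefore the one appearing in the preceding proposition, namely the $\GL(n)\times\GL(n)\times\GL(m)$-equivalence of the associated $n\times n\times m$ tensors $(Y;E_n)$ (which, on $\RRR^{n\times n\times\ell}$, only induces transformations such as $(P,P^{-\top},R)$ together with $\GL(m)$-mixing of all $m$ slices). Under that reading the first half of your argument — $V(Y)=V(\tilde Y)$ combined with the proposition — already completes the proof, and the problematic second half is not needed.
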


The closure of the set of all $n\times p\times m$ tensors equivalent to
$X(Y_1,\ldots,Y_\ell)$ for some $Y_1,\ldots,Y_{\ell}$ 
is $\RRR^{n\times p\times m}$.
Furthermore, the following claim holds.
Let $\mathfrak{V}$ be the set of $n\times p\times m$ tensors
$(X_1;\ldots;X_m)$
such that
$A=(X_1^\top,\ldots,X_\ell^\top)$
is a nonsingular $p\times p$ matrix
and $(Y_1;\ldots;Y_\ell)$ given by
$(Y_1,\ldots,Y_\ell)=A^{-1}X_m$ lies in $\mathfrak{M}$.
Any tensor of $\mathfrak{V}$ has rank $p$.
If $\mathfrak{M}$ is dense in $\mathbb{R}^{n\times n\times \ell}$
then $\mathfrak{V}$ is dense in $\mathbb{R}^{n\times p\times m}$.

\section{Classes of $n\times n\times \ell$ tensors}

We separate $\RRR^{n\times n\times \ell}$ into three classes
$\mathfrak{A}$, $\mathfrak{C}$, and $\mathfrak{B}$ as follows.
Let $\mathfrak{A}$ be the set of tensors $Y$ such that
$(Y;E_n)$ is absolutely nonsingular.
By Proposition~\ref{prop:dimV(Y)=0},
we have the following

\begin{prop} \label{prop:C1capU}
$\mathfrak{A}\cap \mathfrak{S}=\varnothing$.
\end{prop}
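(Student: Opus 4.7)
The proof will be a direct corollary of Proposition~\ref{prop:dimV(Y)=0}, so the plan is simply to verify that the two defining conditions on $Y$ are incompatible. I will argue by contradiction: suppose $Y \in \mathfrak{A} \cap \mathfrak{S}$. From $Y \in \mathfrak{A}$ and the definition of $\mathfrak{A}$, the $n\times n\times m$ tensor $(Y;E_n)$ is absolutely nonsingular, so Proposition~\ref{prop:dimV(Y)=0} yields $\dim \hat{V}(Y)=0$. On the other hand, $Y \in \mathfrak{S}$ forces $\dim \hat{V}(Y)=n$, and since $n\geq 3$ this is incompatible with the previous equality.

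For completeness I would unpack why Proposition~\ref{prop:dimV(Y)=0} applies cleanly: a nonzero vector $\aaa \in V(Y)$ corresponds, by the definition of $V(Y)$, to a nonzero tuple $(x_1,\ldots,x_m)^\top$ such that $\bigl(\sum_{k=1}^\ell x_k Y_k - x_m E_n\bigr)\aaa=\zerovec$, which forces $|\sum_{k=1}^\ell x_k Y_k - x_m E_n|=0$. Absolute nonsingularity of $(Y;E_n)$ rules this out, so $V(Y)=\{\zerovec\}$, hence $\hat{V}(Y)=\{\zerovec\}$ and $\dim \hat{V}(Y)=0$. There is no real obstacle here; the substantive content has already been placed into Proposition~\ref{prop:dimV(Y)=0}, and the present proposition is simply its set-theoretic reformulation.
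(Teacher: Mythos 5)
Your proof is correct and is exactly the paper's argument: the paper derives this proposition directly from Proposition~\ref{prop:dimV(Y)=0}, since membership in $\mathfrak{A}$ forces $\dim\hat{V}(Y)=0$ while membership in $\mathfrak{S}$ forces $\dim\hat{V}(Y)=n>0$. Your unpacking of why absolute nonsingularity gives $V(Y)=\{\zerovec\}$ is a correct (and welcome) elaboration of the same reasoning.
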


From now on, we use symbols $x_1,\ldots,x_\ell,x_m$ as indeterminates over $\RRR$.
For $Y=(Y_1;\ldots;Y_\ell)\in \mathbb{R}^{n\times n\times\ell}$, we define the
$n\times n$ matrix with entries in $\RRR[x_1,\ldots,x_\ell,x_m]$ as follows.
$$M(\xxx,Y)=\sum_{k=1}^\ell x_kY_k-x_mE_n$$
Note that fixing $a_1,\ldots,a_\ell$, the determinant
$|M(\aaa,Y)|$ is positive for $a_m \ll 0$, where $\aaa=(a_1,\ldots,a_\ell,a_m)^\top$.
Set
$$\mathfrak{C}=\{Y \in \mathbb{R}^{n\times n\times\ell}\mid
|M(\aaa,Y)|<0
\text{ for some $\aaa\in \mathbb{R}^m$}\}.$$
Note that $\mathfrak{C}$ is not empty, and 
if $n$ is not congruent to $0$ modulo $4$
then $\mathfrak{A}$ is empty since $m\geq 3$.
Set $\mathfrak{B}=\RRR^{n\times n\times \ell}\smallsetminus
(\mathfrak{A}\cup \mathfrak{C})$.
The class $\mathfrak{B}$ contains the zero tensor.

\begin{prop}
$\mathfrak{A}$ and $\mathfrak{C}$ are open subsets of\/ $\RRR^{n\times n\times \ell}$.
\end{prop}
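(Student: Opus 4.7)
Openness of $\mathfrak{C}$ follows from a single pointwise witness. If $Y_0\in\mathfrak{C}$, fix $\aaa_0\in\RRR^m$ with $|M(\aaa_0,Y_0)|<0$. Since $Y\mapsto|M(\aaa_0,Y)|$ is a polynomial in the entries of $Y$, it is continuous, so $\{Y\mid|M(\aaa_0,Y)|<0\}$ is a Euclidean open neighborhood of $Y_0$ contained in $\mathfrak{C}$.

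For $\mathfrak{A}$ the defining condition is universal in $\aaa$, so a single witness does not suffice; the plan is to compactify the quantifier via homogeneity. Because $|M(\aaa,Y)|$ is homogeneous of degree $n$ in $\aaa$ and $\|\aaa\|^n>0$ for every $\aaa\ne\zerovec$, the condition ``$|M(\aaa,Y)|>0$ for every $\aaa\ne\zerovec$'' is equivalent to ``$|M(\uuu,Y)|>0$ for every $\uuu$ in the unit sphere $S^{m-1}\subset\RRR^m$.'' (Equivalently, the Section~4 formulation ``$(Y;E_n)$ is absolutely nonsingular'' is equivalent to non-vanishing of $|M(\uuu,Y)|$ on $S^{m-1}$, which by constancy of sign on the connected set $S^{m-1}$ and evaluation at $\uuu=(0,\ldots,0,-1)$, where the value is $1$, forces positivity whenever $n$ is even.)

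Given $Y_0\in\mathfrak{A}$, the continuous function $\uuu\mapsto|M(\uuu,Y_0)|$ on the compact set $S^{m-1}$ attains a positive minimum, call it $\epsilon>0$. The map $(Y,\uuu)\mapsto|M(\uuu,Y)|$ is a polynomial on $\RRR^{n\times n\times\ell}\times\RRR^m$, hence jointly continuous, so the standard uniform-continuity argument on a compact neighborhood of $\{Y_0\}\times S^{m-1}$ yields a Euclidean neighborhood $U$ of $Y_0$ on which the determinant $|M(\uuu,Y)|$ differs from $|M(\uuu,Y_0)|$ by less than $\epsilon/2$, uniformly in $\uuu\in S^{m-1}$. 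Consequently $|M(\uuu,Y)|>\epsilon/2>0$ for every $(Y,\uuu)\in U\times S^{m-1}$, and by the homogeneity reduction $U\subset\mathfrak{A}$.

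No genuine obstacle arises; the only step beyond a single polynomial evaluation is the compactness-plus-uniform-continuity reduction for $\mathfrak{A}$, which is forced by the universal quantifier in its definition. (When $n$ is not divisible by $4$ one already has $\mathfrak{A}=\varnothing$ for $m\geq3$, trivially open, so the substantive case is $n\equiv0\pmod 4$.)
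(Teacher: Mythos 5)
Your proof is correct. The paper states this proposition without any proof (it is asserted as evident both in the introduction and in Section~4), so there is no argument of the authors' to compare against; your two-part argument --- a single polynomial witness $\aaa_0$ giving openness of $\mathfrak{C}$, and degree-$n$ homogeneity of $|M(\aaa,Y)|$ in $\aaa$ reducing the universal quantifier in the definition of $\mathfrak{A}$ to the compact sphere $S^{m-1}$, followed by the standard compactness/uniform-continuity estimate --- is exactly the canonical way to fill this in, and every step checks out.
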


Recall that 
$$\mathfrak{A}=\{Y \in \mathbb{R}^{n\times n\times\ell} \mid
|M(\aaa,Y)|>0 \text{ for all $\aaa\ne \zerovec$}
\}.$$
Thus it holds
$$
\mathfrak{B}=\{Y \in \mathbb{R}^{n\times n\times\ell} \mid 
\begin{array}{l}
|M(\bbb,Y)|=0 \text{ for some $\bbb\ne \zerovec$ and} \\
 |M(\aaa,Y)|\geq 0 \text{ for all $\aaa$}
\end{array}
\}.
$$

\begin{prop} \label{prop:C3isboundaryC2}
$\mathfrak{B}$ is a boundary of $\mathfrak{C}$.
In particular, $\mathbb{R}^{n\times n\times \ell}$ is a disjoint sum
of $\mathfrak{A}$ and the closure $\overline{\mathfrak{C}}$ of $\mathfrak{C}$.
\end{prop}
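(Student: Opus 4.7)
My plan is to prove the sharper statement $\mathfrak{B}=\overline{\mathfrak{C}}\setminus\mathfrak{C}$. Since $\mathfrak{C}$ is open, this identifies $\mathfrak{B}$ with the topological boundary $\partial\mathfrak{C}$, and combining with the three-way partition $\RRR^{n\times n\times\ell}=\mathfrak{A}\sqcup\mathfrak{C}\sqcup\mathfrak{B}$ built into the definitions yields the asserted decomposition $\RRR^{n\times n\times\ell}=\mathfrak{A}\sqcup\overline{\mathfrak{C}}$.

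The easy direction $\overline{\mathfrak{C}}\cap\mathfrak{A}=\varnothing$ is a compactness argument on the unit sphere: if $Y_t\to Y$ with $Y_t\in\mathfrak{C}$, choose $\aaa_t\in\RRR^m$ of unit norm with $|M(\aaa_t,Y_t)|<0$; passing to a subsequential limit $\aaa$ with $\|\aaa\|=1$, continuity of the determinant gives $|M(\aaa,Y)|\le 0$ at the nonzero vector $\aaa$, excluding $Y\in\mathfrak{A}$.

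The substantive part is the reverse inclusion $\mathfrak{B}\subseteq\overline{\mathfrak{C}}$, which I plan to prove by an explicit one-slice perturbation. Given $Y\in\mathfrak{B}$, pick $\bbb=(b_1,\ldots,b_m)^\top\ne\zerovec$ with $|M(\bbb,Y)|=0$; some $b_k$ with $k\le\ell$ must be nonzero, since otherwise $\bbb=b_m\eee_m$ would force $M(\bbb,Y)=-b_m E_n$ to be nonsingular. After reindexing I assume $b_1\ne 0$ and consider $Y_\epsilon:=(Y_1+\epsilon Z_1;Y_2;\ldots;Y_\ell)$, which converges to $Y$ and satisfies $M(\bbb,Y_\epsilon)=M(\bbb,Y)+\epsilon b_1 Z_1$. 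The task is to choose $Z_1$ so that $|M(\bbb,Y_\epsilon)|<0$ for all sufficiently small $\epsilon>0$.

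Set $s:=n-\rank M(\bbb,Y)\ge 1$ and choose $P,Q\in\GL(n,\RRR)$ with $PM(\bbb,Y)Q=\diag(E_{n-s},O_s)$. Writing the bottom-right $s\times s$ block of $PZ_1Q$ as $E$, a Schur-complement expansion gives
$$|M(\bbb,Y)+\epsilon b_1 Z_1|=(\det P)^{-1}(\det Q)^{-1}\,b_1^s\,|E|\,\epsilon^s+O(\epsilon^{s+1}).$$
The prefactor $(\det P)^{-1}(\det Q)^{-1}b_1^s$ is a fixed nonzero scalar, and $|E|$ can be made to have any desired sign by choosing $Z_1$ (for any $s\ge 1$ the determinant of an $s\times s$ matrix takes both signs). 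Selecting $Z_1$ so that the $\epsilon^s$ coefficient is negative places $Y_\epsilon$ in $\mathfrak{C}$ for all small enough $\epsilon>0$, whence $Y\in\overline{\mathfrak{C}}$. The main subtlety I anticipate is the case $\rank M(\bbb,Y)\le n-2$, where the naive first-order adjugate derivative $\mathrm{tr}(\mathrm{adj}(M(\bbb,Y))\cdot b_1 Z_1)$ vanishes identically and only the higher-order Schur-complement bookkeeping isolates the controlling $s\times s$ minor.
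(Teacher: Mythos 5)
Your proof is correct and follows essentially the same route as the paper: both reduce the substantive inclusion $\mathfrak{B}\subseteq\overline{\mathfrak{C}}$ to perturbing the tensor so that $|M(\bbb,\cdot)|$ becomes negative at the degenerate direction $\bbb$ --- the paper first applies some $g\in\GL(\ell)$ to make $\sum_k b_kY_k$ the first slice and then simply \emph{asserts} the existence of a nearby matrix of negative determinant, which is exactly the fact your Schur-complement expansion establishes. The remaining ingredients (that some $b_k$ with $k\le\ell$ must be nonzero, and that $\mathfrak{A}$ is open and disjoint from $\mathfrak{C}$, for which the paper cites openness of $\mathfrak{A}$ rather than rerunning the compactness argument) agree with the paper's.
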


\begin{proof}
It suffices to show that 
$\mathfrak{B} \subset \overline{\mathfrak{C}}$.
Let $Y=(Y_1;\ldots;Y_\ell) \in \mathfrak{B}$.
There are a nonzero vector $\bbb=(b_1,\ldots,b_\ell,b_m)^\top\in\mathbb{R}^n$ 
with $|M(\bbb,Y)|=0$ and an element $g \in \GL(\ell)$
such that $g\cdot Y=(Z_1;Z_2;\ldots;Z_\ell)$ and
$Z_1=\sum_{k=1}^\ell b_kY_k$.
Then $|Z_1-b_mE_n|=0$.
Take a sequence $\{Z_1^{(u)}\}_{u\geq 1}$ such that
$|Z_1^{(u)}-b_mE_n|<0$ and $\lim_{u\to\infty} Z_1^{(u)}=Z_1$.
Thus, $(Z_1^{(u)};Z_2;\ldots;Z_\ell)\in \mathfrak{C}$ and then
$g^{-1}\cdot (Z_1^{(u)};Z_2;\ldots;Z_\ell)\in \mathfrak{C}$.
Therefore, $Y\in \overline{\mathfrak{C}}$.
\qed
\end{proof}

\begin{cor}
If $\mathfrak{A}$ is not empty then
$\mathfrak{B}$ is a boundary of $\mathfrak{A}$.
\end{cor}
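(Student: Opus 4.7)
The conclusion ``$\mathfrak{B}$ is a boundary of $\mathfrak{A}$'' should be read as the equality $\mathfrak{B}=\partial\mathfrak{A}$. The inclusion $\partial\mathfrak{A}\subseteq\mathfrak{B}$ is purely topological and independent of the hypothesis $\mathfrak{A}\neq\varnothing$: because $\mathfrak{A}$ and $\mathfrak{C}$ are disjoint open sets, $\overline{\mathfrak{A}}\cap\mathfrak{C}=\varnothing$, and since $\mathfrak{A}$ is open, $\partial\mathfrak{A}=\overline{\mathfrak{A}}\setminus\mathfrak{A}\subseteq\mathfrak{B}$. The substantive content, where the hypothesis is used, is the reverse inclusion $\mathfrak{B}\subseteq\overline{\mathfrak{A}}$.

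My plan for this reverse inclusion mirrors the strategy of Proposition~\ref{prop:C3isboundaryC2}. Given $Y\in\mathfrak{B}$, I would fix any $W=(W_1,\ldots,W_\ell)\in\mathfrak{A}$ and study the family $Y+\epsilon W$ for $\epsilon>0$. Using the identity $M(\aaa,Y+\epsilon W)=M(\aaa,Y)+\epsilon\sum_{k=1}^\ell a_k W_k$, for $\aaa$ on the unit sphere $S^{m-1}$ with $|M(\aaa,Y)|>0$ the perturbed determinant $|M(\aaa,Y+\epsilon W)|$ stays positive for all small $\epsilon>0$ by continuity. The delicate directions are those in the compact zero set $\mathcal{Z}_Y=\{\aaa\in S^{m-1}:|M(\aaa,Y)|=0\}$. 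At such $\aaa$, a block-Schur expansion around the singular matrix $M(\aaa,Y)$ (of some corank $r\ge 1$) gives $|M(\aaa,Y+\epsilon W)|=\epsilon^{r}\Phi(\aaa,W)+O(\epsilon^{r+1})$, where $\Phi(\aaa,W)$ is, up to a positive factor coming from the nonsingular block of $M(\aaa,Y)$, the determinant of the restriction of $\sum_{k=1}^\ell a_k W_k$ to the left- and right-null-spaces of $M(\aaa,Y)$. If I can exhibit $W\in\mathfrak{A}$ for which $\Phi(\aaa,W)>0$ for every $\aaa\in\mathcal{Z}_Y$, then $Y+\epsilon W\in\mathfrak{A}$ for every sufficiently small $\epsilon>0$, and the sequence $Y+u^{-1}W\to Y$ witnesses $Y\in\overline{\mathfrak{A}}$.

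The remaining obstacle is therefore to produce such a $W\in\mathfrak{A}$: this is a family of open inequalities on $W$ indexed by the compact semi-algebraic set $\mathcal{Z}_Y$. I plan to combine three ingredients. First, the $\mathrm{GL}(\ell)$-invariance of $\mathfrak{A}$ together with the normal-form trick used in the proof of Proposition~\ref{prop:C3isboundaryC2} reduces the condition at any single $\aaa\in\mathcal{Z}_Y$ to a single determinantal inequality of the type handled there. Second, since $\mathfrak{A}\neq\varnothing$ with $m\ge 3$ forces $4\mid n$ and in particular $n$ even, $\mathfrak{A}$ is symmetric under $W\mapsto -W$, so both signs of $\Phi(\aaa,\cdot)$ are realized inside $\mathfrak{A}$. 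Third, a continuity/compactness argument on $\mathcal{Z}_Y$ should patch these local choices into a single $W\in\mathfrak{A}$ satisfying $\Phi(\aaa,W)>0$ uniformly on $\mathcal{Z}_Y$. The hardest step will be this uniform patching, since $\mathfrak{A}$ is an open cone but not convex, and it is a priori conceivable that the local sign conditions at different points of $\mathcal{Z}_Y$ are incompatible on a straight-line perturbation; if so, I would replace $Y+\epsilon W$ by a curved path exploiting the cone structure of $\mathfrak{A}$, or choose $W$ to be generic with respect to the semi-algebraic strata of $\mathcal{Z}_Y$.
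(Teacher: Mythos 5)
The paper supplies no proof of this corollary; it is presented as an immediate consequence of Proposition~\ref{prop:C3isboundaryC2}, whose ``in particular'' clause gives $\RRR^{n\times n\times \ell}=\mathfrak{A}\sqcup\overline{\mathfrak{C}}$. What genuinely is immediate is your first paragraph: $\mathfrak{A}$ is open and $\overline{\mathfrak{A}}\subseteq\RRR^{n\times n\times \ell}\smallsetminus\mathfrak{C}=\mathfrak{A}\cup\mathfrak{B}$, hence $\partial\mathfrak{A}\subseteq\mathfrak{B}$, and $\partial\mathfrak{A}\ne\varnothing$ since $\mathfrak{A}$ is a nonempty proper open subset of a connected space. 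If ``is a boundary of'' is read in the same sense in which the proposition's proof establishes it for $\mathfrak{C}$ (namely as a containment), that paragraph is the whole proof and the remainder of your proposal is not needed; the corollary is in any case not invoked later in the paper.

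You instead read the statement as the equality $\mathfrak{B}=\partial\mathfrak{A}$, and for the hard inclusion $\mathfrak{B}\subseteq\overline{\mathfrak{A}}$ what you give is a plan with an acknowledged hole precisely at the decisive step. Producing one $W\in\mathfrak{A}$ with $\Phi(\aaa,W)>0$ simultaneously for all $\aaa$ in the zero set $\mathcal{Z}_Y$ is the entire difficulty, and none of your three ingredients closes it: $\mathfrak{A}$ is not convex, so local choices cannot be averaged; the corank $r$ of $M(\aaa,Y)$ may jump along $\mathcal{Z}_Y$, so the leading power of $\epsilon$ is not constant and the expansion does not patch by continuity; and when $r$ is even the symmetry $W\mapsto -W$ multiplies $\Phi(\aaa,W)$ by $(-1)^r=+1$, so ``both signs are realized inside $\mathfrak{A}$'' fails exactly when you would need it. Even granting a good $W$, you still owe a uniform estimate gluing the regime near $\mathcal{Z}_Y$ (where $\epsilon^r\Phi$ must dominate the error) to the regime away from it, since pointwise continuity does not yield a single $\epsilon$ working for all $\aaa\in S^{m-1}$. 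Note why the proposition's argument does not transfer: membership in $\mathfrak{C}$ is witnessed by one inequality at one point $\aaa$, which a one-slice perturbation can arrange, whereas membership in $\mathfrak{A}$ is a condition at every $\aaa\ne\zerovec$ at once. So under your reading the reverse inclusion remains unproved --- by you and by the paper --- and should be flagged as requiring a separate, nontrivial argument rather than as a formal corollary of Proposition~\ref{prop:C3isboundaryC2}.
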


The set $\mathfrak{B}$ contains a nonzero tensor in general.
We give an example.

\begin{example}\rm 
Let $A=(A_1;A_2;A_3)$ be a $6\times 6\times 3$ tensor
given by
$$X(x_1,x_2,x_3)=x_1A_1+x_2A_2-x_3A_3=\begin{pmatrix}
-x_3&-x_2&0&0&0&-x_1\\
x_1&-x_3&x_2&0&0&0 \\
0&x_1&-x_3&x_2&0&0 \\
0&0&x_1&-x_3&-x_2&0\\
0&0&0&x_1&-x_3&x_2\\
-x_2&0&0&0&x_1&-x_3
\end{pmatrix}
$$
Then $|a_1A_1+a_2A_2-a_3A_3|=a_3^2(a_1a_2-a_3^2)^2+(a_1^3+a_2^3)^2\geq 0$.
The equality holds if $a_3=0$ and $a_1=-a_2$.
Thus $\dim \hat{V}((A_1;A_2))=1$.
Let $B=\begin{pmatrix} 1&\cdots&1\\ 0 &\cdots&0 \\ \vdots&&\vdots\\
0&\cdots&0\end{pmatrix}$ be a $6\times 6$ matrix.
If $x_3=y$, $x_1=-y^2$, and $x_2=-2y/5$, then
$$|X+yB|=y^6(y^6+y^5-7y^4/5+161y^3/125-167y^2/125+629y/625-2926/15625).$$ 
Thus, if $|a_3|$ is sufficiently small 
then
$|X(-a_3^2,-2a_3/5,a_3)+a_3B|<0$.
\end{example}

\begin{prop} \label{prop:notFull}
If $m\leq \rho(n-1)$ then $\mathfrak{C} \not\subset \mathfrak{S}$,
where $\rho(n-1)$ is a Hurwitz-Radon number.
\end{prop}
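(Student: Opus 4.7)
The plan is to exhibit, under the hypothesis, an explicit tensor $Y\in\mathfrak{C}\smallsetminus\mathfrak{S}$ by embedding an absolutely nonsingular tensor of one smaller row/column dimension as the principal block of $Y$.

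First I would invoke the existence result from \cite{Sumi-etal:2010a}: since $m\leq\rho(n-1)$, there exist $(n-1)\times(n-1)$ matrices $Z_1,\ldots,Z_\ell$ making $(Z_1;\ldots;Z_\ell;E_{n-1})$ absolutely nonsingular; equivalently, the matrix $N(\xxx):=\sum_{k=1}^{\ell}x_kZ_k-x_mE_{n-1}$ is invertible for every $\xxx=(x_1,\ldots,x_m)^\top\ne\zerovec$. Because $\rho(k)\leq 2$ whenever $4\nmid k$, the combined condition $\rho(n-1)\geq m\geq 3$ forces $4\mid(n-1)$; thus $|N(\xxx)|$ is a homogeneous polynomial of even degree $n-1$ that equals $(-1)^{n-1}=1$ at $\xxx=\eee_m$, and by continuity on the connected set $\RRR^m\smallsetminus\{\zerovec\}$ is positive throughout.

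Next I would set $Y_k:=\begin{pmatrix}Z_k & 0 \\ 0 & 0\end{pmatrix}\in\RRR^{n\times n}$ for $1\leq k\leq\ell$ and $Y:=(Y_1;\ldots;Y_\ell)$. The matrix $M(\aaa,Y)$ is block-diagonal with diagonal blocks $N(\aaa)$ and $-a_m$, so $|M(\aaa,Y)|=-a_m\,|N(\aaa)|$; evaluating at $\aaa=\eee_m$ yields $-1<0$, which places $Y$ in $\mathfrak{C}$.

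To finish, I would compute $V(Y)$. Suppose $\aaa=(a',a_n)^\top\in V(Y)$ is witnessed by $\xxx\ne\zerovec$; then the block structure of the equation $M(\xxx,Y)\aaa=\zerovec$ splits into $N(\xxx)a'=\zerovec$ and $-x_m a_n=0$. Since $N(\xxx)$ is invertible by absolute nonsingularity, $a'=\zerovec$, whence $V(Y)\subset\RRR\,\eee_n$ and $\dim\hat{V}(Y)\leq 1<n$. Thus $Y\in\mathfrak{C}\smallsetminus\mathfrak{S}$, proving $\mathfrak{C}\not\subset\mathfrak{S}$. The only mildly delicate step is producing a witness $\aaa$ with $|M(\aaa,Y)|<0$; the forced divisibility $4\mid(n-1)$ (a consequence of $\rho(n-1)\geq m\geq 3$) is exactly what makes the naive choice $\aaa=\eee_m$ succeed with no further case analysis.
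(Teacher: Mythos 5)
Your proof is correct and follows essentially the same route as the paper: both arguments embed an $(n-1)\times(n-1)\times m$ absolutely nonsingular tensor as a diagonal block of an $n\times n\times\ell$ tensor (the paper uses $B_k=\diag(a_k,A_k)$, you use the special case with a zero scalar slot) and then observe that the resulting $Y$ lies in $\mathfrak{C}$ while $V(Y)$ is confined to a single coordinate line, so $Y\notin\mathfrak{S}$. Your explicit verification that $|M(\eee_m,Y)|<0$ via $4\mid(n-1)$ just fills in a step the paper labels ``easy to see.''
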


\begin{proof}
Let $(A_1;\ldots;A_\ell;E_{n-1})$ be an $(n-1)\times (n-1)\times m$ absolutely nonsingular tensor.
Put $B_k=\diag(a_k,A_k)$ for $1\leq k\leq \ell$ and $B_m=\diag(1,E_{n-1})=E_n$, and
$B=(B_1;\ldots;B_\ell)$.
Then it is easy to see that
$B \in \mathfrak{C}$ and $|\sum_{k=1}^\ell x_kB_k-zB_m|=0$ implies 
$z=\sum_{k=1}^\ell a_kx_k$.
Therefore $V(B)=\{a(1,0,\ldots,0)^\top \in \RRR^n \mid a\in \RRR\}$.
In particular $B \notin \mathfrak{S}$.
\qed
\end{proof}

\section{Irreducibility}
In the space of homogeneous polynomials in $m$ variables, there exists  
a proper Zariski closed subset $S$ such that 
if a polynomial does not belong to $S$ then it is irreducible \cite[Theorem~7]{Kaltofen:1995}, since $m\geq 3$.
Let $P(m,n)$ be the set of
homogeneous polynomials in $m$ variables $x_1,\ldots,x_m$ with real coefficients of degree $n$ such that the coefficient of $x_m^n$ is one.
Its dimension is $\binom{m+n-1}{m-1}-1$.
Let $I_\ell$ be a nonempty Zariski open subset of $P(m,n)$ such that
any polynomial of $I_\ell$ is irreducible.
Note that $|-M(\xxx,Y)| \in P(m,n)$.
This section stands to show the following fact.

\begin{prop} \label{prop:irr}
The set
$$\{Y\in\RRR^{n\times n\times\ell} \mid |-M(\xxx,Y)| \in I_\ell\}$$
is a nonempty Zariski open subset of\/ $\RRR^{n\times n\times \ell}$.
\end{prop}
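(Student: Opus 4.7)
My plan is to realize the subset in question as the preimage $\Phi^{-1}(I_\ell)$ of a polynomial map $\Phi$ on the tensor space, and then exhibit an explicit tensor $Y^*$ with $\Phi(Y^*) \in I_\ell$.

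First, the map $\Phi : \RRR^{n\times n\times \ell} \to P(m, n)$ given by $Y \mapsto |{-M(\xxx, Y)}|$ is regular, i.e.\ a polynomial map in the entries of $Y$: since $-M(\xxx, Y) = x_m E_n - \sum_{k=1}^\ell x_k Y_k$, the coefficient of $x_m^n$ equals $1$, placing $\Phi(Y)$ in $P(m, n)$, and the Leibniz determinant formula expresses every other coefficient of $\Phi(Y)$ (viewed as a polynomial in $x_1, \ldots, x_m$) as a polynomial in the entries of $Y_1, \ldots, Y_\ell$. Thus $\Phi^{-1}(I_\ell)$, being the pullback of a Zariski open set by a regular map, is Zariski open in $\RRR^{n\times n\times \ell}$.

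For nonemptiness, I would take $Y_1^{*} = \sum_{i=1}^{n-1} E_{i, i+1}$ (upper shift), $Y_2^{*} = E_{n, 1}$, and $Y_k^{*} = \zerovec$ for $k \geq 3$ (which is available since $\ell = m-1 \geq 2$). Then $-M(\xxx, Y^{*})$ is a cyclic companion-type matrix, and a cofactor expansion along its first column yields
$$\Phi(Y^{*}) = x_m^n - x_1^{n-1} x_2.$$
This polynomial is absolutely irreducible in $\CCC[x_1, \ldots, x_m]$: the dehomogenization $x_1 = 1$ gives $x_m^n - x_2$, which is linear in $x_2$ and hence irreducible in $\CCC[x_2,\ldots,x_m]$; any nontrivial homogeneous factorization $\Phi(Y^{*}) = gh$ would either induce a nontrivial factorization of $x_m^n - x_2$ or force one factor to be a pure power of $x_1$, which is impossible since $x_1 \nmid \Phi(Y^{*})$ (because of the $x_m^n$ term).

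The main obstacle is justifying $\Phi(Y^{*}) \in I_\ell$ from the information given: this relies on interpreting $I_\ell$ as the (largest) Kaltofen Zariski-open subset of $P(m, n)$ consisting of absolutely irreducible polynomials, in which case the absolute irreducibility established above places $\Phi(Y^{*})$ in $I_\ell$. An abstract dominance argument on $\Phi$ is not available, because $\dim P(m, n) = \binom{m+n-1}{m-1} - 1$ exceeds $\dim \RRR^{n\times n\times \ell} = \ell n^2$ for $m$ large, so the image of $\Phi$ is not Zariski dense in $P(m, n)$ in general; the explicit companion-matrix construction is therefore essential.
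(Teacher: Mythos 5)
Your proposal is correct in substance, but it takes a genuinely different route from the paper for the nonemptiness part. The openness step is the same in both: the coefficients of $|-M(\xxx,Y)|$ are polynomials in the entries of $Y$, so the set is the preimage of the Zariski open set $I_\ell$ under a polynomial map. For nonemptiness, the paper exhibits no explicit tensor: it first treats $m=3$ by showing (Lemma~\ref{lem:Jacobian}) that the Jacobian of $Y\mapsto |x_1Y_1+x_2Y_2+x_3E_n|$ restricted to an $n(n+3)/2$-dimensional slice is generically nonzero, so the image contains a Euclidean-open subset of $P(3,n)$ and hence meets \emph{any} nonempty Zariski open $I_2$; it then reduces $\ell>2$ to $\ell=2$ by the specialization $x_1=\cdots=x_{\ell-2}=0$, using that a form with irreducible specialization is irreducible. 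Your companion-matrix point with $|-M(\xxx,Y^{*})|=x_m^n-x_1^{n-1}x_2$ replaces all of this with a single explicit example (the determinant computation is right, the dehomogenization argument for absolute irreducibility is right, and $\ell=m-1\geq 2$ does make $Y_2^{*}$ available), which is considerably shorter than the Jacobian induction plus the commutative diagram. The trade-off is exactly the one you flag: your argument needs $I_\ell$ to contain every absolutely irreducible member of $P(m,n)$, whereas the paper's $m=3$ base case is insensitive to which nonempty Zariski open $I_2$ of irreducibles was chosen. But the paper's own induction step makes the same identification you do --- the inclusion $\hat{\mathfrak{T}}\subset\mathfrak{T}_\ell$ there is justified only by ``$f_\ell(Y)$ irreducible $\Rightarrow f_\ell(Y)\in I_\ell$'' --- and, as you observe, $\Phi$ is not dominant for large $m$, so the statement could not survive an adversarially small choice of $I_\ell$ in any case; the intended $I_\ell$ is Kaltofen's open set of absolutely irreducible forms, under which your proof is complete. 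So your reading coincides with what the paper actually uses, and your explicit example is a legitimate, more elementary substitute for Lemma~\ref{lem:Jacobian} and the projection argument.
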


Let $f_\ell\colon \RRR^{n\times n\times\ell} \to P(m,n)$ be a map which sends
$(Y_1;\ldots;Y_\ell)$ to $|\sum_{k=1}^\ell x_kY_k+x_mE_n|$.
Note that $|-M(\xxx,Y)| \in I_\ell$ if and only if 
$f_\ell(Y) \in I_\ell$.
Since $I_\ell$ is a Zariski open set,
$$\mathfrak{T}_\ell:=\{Y\in\RRR^{n\times n\times \ell}  \mid 
f_\ell(Y) \in I_{\ell}\}$$
is a Zariski open subset of $\RRR^{n\times n\times \ell}$.
Then it suffices to show that $\mathfrak{T}_\ell$ is not empty.
First, we show it in the case where $m=3$.
\par
The affine space $P(3,n)$ is isomorphic to a real vector space of dimension $n(n+3)/2$ with basis
$$\{x_1^ax_2^bx_3^c\mid 0\leq a,b,c\leq n, a+b+c=n, c\ne n\}.$$
Let $G$ be a map from $\RRR^{n\times n\times 2}$ to $\RRR^{n(n+3)/2}$
defined as
$$G((Y_1;Y_2))=\phi(|x_1Y_1+x_2Y_2+x_3E_n|),$$
where $\phi\colon P(3,n)\to \RRR^{n(n+3)/2}$ is an isomorphism.
It suffices to show that the Jacobian matrix of $G$ has generically full column rank.
To show this, we restrict the source of $G$ to
$$S:=\{(Y_1;Y_2) \in \RRR^{n\times n\times 2} \mid
Y_1=
\begin{pmatrix} u_{11} & 0&\cdots & 0\\
u_{21} & u_{22} & \ddots & \vdots \\
\vdots & \ddots & \ddots & 0\\
u_{n1}& \cdots & u_{n-1,1} & u_{n1}
\end{pmatrix},
Y_2=
\begin{pmatrix}0 & 0&\cdots & v_1\\
-1 & 0 & \cdots & v_{2}\\
\vdots & \ddots & \ddots & \vdots \\
0& \cdots & -1 & v_n
\end{pmatrix}
\}$$
of dimension $n(n+3)/2$, say $G|_S\colon S\to \RRR^{n(n+3)/2}$.

\begin{lemma} \label{lem:Jacobian}
The Jacobian of $G|_S$ is nonzero.
\end{lemma}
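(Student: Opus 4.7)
The aim is to show $\det(DG|_S)$, viewed as a polynomial in the parameters $u_{ij}$ ($i\ge j$) and $v_i$ of $S$, is not identically zero; for this it suffices to exhibit one point of $S$ at which the Jacobian is nonsingular. Take $Y_1^*=\diag(\alpha_1,\ldots,\alpha_n)$ with $\alpha_i$ pairwise distinct (so $u_{ii}=\alpha_i$, $u_{ij}=0$ for $i>j$) and $Y_2^*=Y_2(v)$ with $v_1,\ldots,v_n$ generic nonzero. Expanding $|x_1Y_1^*+x_2Y_2^*+x_3I|$ along the last column (as in the earlier computation of $|x_2Y_2+x_3I|$) yields the closed form
$$p(x_1,x_2,x_3)=\prod_{j=1}^n(x_3+\alpha_j x_1)+\sum_{i=1}^n v_i\,x_2^{n-i+1}\prod_{j<i}(x_3+\alpha_j x_1).$$

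Partition the parameters into $\mathrm{A}=\{u_{ii}\}$, $\mathrm{B}=\{u_{ij}\}_{i>j}$, $\mathrm{C}=\{v_i\}$, of sizes $n,\binom{n}{2},n$, and the monomials $x_1^ax_2^bx_3^c$ (with $a+b+c=n$, $c<n$) into $\mathrm{I}$ ($b=0$), $\mathrm{II}$ (mixed: $a,b\ge 1$), $\mathrm{III}$ ($a=0$). The Jacobian at $(Y_1^*,Y_2^*)$ has three zero-patterns: $\partial p/\partial v_i$ carries an $x_2$ factor (kills $\mathrm{I}$); $\partial p/\partial u_{ii}$ and $\partial p/\partial u_{ij}=x_1 C_{ij}$ carry an $x_1$ factor (kill $\mathrm{III}$); and for $i>j$, $C_{ij}$ has an extra $x_2$ factor because removing column $j$ from $M:=x_1Y_1^*+x_2Y_2^*+x_3I$ strips row $j$ of its diagonal, leaving only $-x_2$ at column $j-1$ and $x_2v_j$ at the last column (both divisible by $x_2$), so $\partial p/\partial u_{ij}$ also vanishes on $\mathrm{I}$. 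Thus the Jacobian has the block form
$$\begin{pmatrix}J_{\mathrm{AI}}&J_{\mathrm{AII}}&0\\ 0&J_{\mathrm{BII}}&0\\ 0&J_{\mathrm{CII}}&J_{\mathrm{CIII}}\end{pmatrix},$$
with determinant $\det J_{\mathrm{AI}}\cdot\det J_{\mathrm{BII}}\cdot\det J_{\mathrm{CIII}}$. Inspection gives $J_{\mathrm{CIII}}=E_n$ and $J_{\mathrm{AI}}$ with $(i,a)$-entry the $(a-1)$-st elementary symmetric polynomial in $\{\alpha_k:k\ne i\}$, whose determinant is, up to sign, the Vandermonde $\prod_{i<j}(\alpha_i-\alpha_j)\ne 0$.

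The main obstacle is showing $\det J_{\mathrm{BII}}\ne 0$. Expanding $\det M_{ij}$ along row $j$ (using the two entries noted above) gives $C_{ij}=(-x_2)N_{ij}^{(1)}+x_2v_j\,N_{ij}^{(2)}$, where $N_{ij}^{(2)}$ is $v$-free (removing the last column of $M_{ij}$ eliminates all $v_k$-entries) and $N_{ij}^{(1)}$ reduces, by the same argument, to the analogous expression with $j$ decremented; induction on $j$ therefore shows $C_{ij}$ is a polynomial in $v_1,\ldots,v_j$ only, whose $v_j$-coefficient $x_2\,N_{ij}^{(2)}$ is nonzero and factors as a product $\prod_k(x_3+\alpha_k x_1)$ over a specific subset of indices. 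Ordering the rows of $\mathrm{B}$ by $(j,i)$ lexicographically and the columns of $\mathrm{II}$ correspondingly, the $v_j$-leading parts exhibit $J_{\mathrm{BII}}$ as block-lower-triangular with diagonal blocks of size $n-j$ indexed by $j=1,\ldots,n-1$, each with scalar factor $v_j$ and Vandermonde-type internal structure in the $\alpha_k$'s. This yields $\det J_{\mathrm{BII}}=\pm\bigl(\prod_{j=1}^{n-1}v_j^{n-j}\bigr)P(\alpha)$, where $P(\alpha)$ is a nonzero polynomial in the differences $\alpha_k-\alpha_\ell$; the $n=3$ case reduces to $\det J_{\mathrm{BII}}=v_1^2 v_2(\alpha_1-\alpha_2)$. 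The combinatorial work lies in organizing this triangular reduction in general $n$.
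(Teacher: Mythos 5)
Your reduction is set up correctly: evaluating at $Y_1^*=\diag(\alpha_1,\ldots,\alpha_n)$ with generic $v$, the closed form $p=\prod_j(x_3+\alpha_jx_1)+\sum_iv_ix_2^{n-i+1}\prod_{j<i}(x_3+\alpha_jx_1)$ is right, the three divisibility patterns hold, the permuted matrix is block triangular so the determinant factors as $\pm\det J_{\mathrm{AI}}\det J_{\mathrm{BII}}\det J_{\mathrm{CIII}}$, and $J_{\mathrm{AI}}$, $J_{\mathrm{CIII}}$ are handled. But the entire content of the lemma now sits in $\det J_{\mathrm{BII}}\neq0$, and there your argument is an assertion, not a proof — you say so yourself ("the combinatorial work lies in organizing this triangular reduction in general $n$"). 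Worse, the structure you describe (blocks of size $n-j$ indexed by $j$, rows ordered lexicographically in $(j,i)$, scalar factor $v_j$ per block) is not correct even for $n=3$: the three rows are $\partial p/\partial u_{21}=-v_1x_1x_2^2$, $\partial p/\partial u_{31}=-v_1x_1x_2(x_3+\alpha_2x_1)$, $\partial p/\partial u_{32}=-x_1x_2\bigl(v_2(x_3+\alpha_1x_1)+v_1x_2\bigr)$, and no partition of the three mixed monomials into a $2$-set for $j=1$ and a $1$-set for $j=2$ makes this block triangular (the two $j=1$ rows jointly hit all three monomials, and the $j=2$ row hits all three as well). So as written the key step both is incomplete and rests on a false structural claim.

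The claim itself ($\det J_{\mathrm{BII}}=\pm\prod_jv_j^{n-j}\cdot P(\alpha)$ with $P\neq0$) is true, and the correct bookkeeping is to group by $i$, i.e.\ by the $x_2$-degree of the monomials, not by $j$: from $\partial p/\partial u_{ij}=-x_1x_2^{n-i}\bigl(\prod_{t=j+1}^{i-1}\lambda_t\bigr)\sum_{k=1}^jv_kx_2^{j-k+1}\prod_{t<k}\lambda_t$ (with $\lambda_t=x_3+\alpha_tx_1$), the $v_k$-term of row $(i,j)$ has $x_2$-degree $n-i+j-k+1\geq n-i+1$, with equality only for $k=j$; ordering both rows and columns by decreasing $x_2$-degree therefore makes $J_{\mathrm{BII}}$ genuinely block lower triangular with diagonal blocks indexed by $i=2,\ldots,n$ of size $i-1$, each equal to $-\diag(v_1,\ldots,v_{i-1})$ times the matrix $\bigl(e_{a-1}(\{\alpha_t\}_{t<i,\,t\neq j})\bigr)_{j,a}$ — precisely the matrix $S_{i-1}$ of Lemma~\ref{lem:vanDerMonde}, with nonzero Vandermonde determinant. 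That repair would complete your argument. The paper takes a different route to the same point: it assumes a vanishing linear combination of the partials, strips off the factor $\lambda_1$ to recover the identical relation for the $(n-1)\times(n-1)$ subproblem, kills all coefficients with $j\geq2$ by induction on $n$, and then eliminates $c(v_1),c(u_{i1})$ by comparing powers of $x_2$. Either the induction or the corrected block decomposition works; your proposal as it stands contains neither.
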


\begin{proof}
Put $g(Y):=f(Y)-x_3^n$ for $Y\in S$.
Suppose that for constants $c(v_j)$, $c(u_{ij})$, the linear equation
\begin{equation}\label{eqn:1}
\sum_{j=1}^n c(v_j)\frac{\partial g}{\partial v_j}
+\sum_{1\leq j\leq i\leq n} c(u_{ij})\frac{\partial g}{\partial u_{ij}}=0
\end{equation}
holds.
We show that all of $c(v_j)$, $c(u_{ij})$ are zero by induction on $n$.
It is easy to see that the assertion holds in the case where $n=1$.
As the induction assumption, we assume that the
assertion holds in the case where $n-1$ instead of $n$.
We put
$$\lambda_j=u_{jj}x_1+x_3 \text{ and  } \mu(a,b)=\prod_{t=a}^{b}\lambda_t.$$
After a partial derivation, we put $u_{ij}=0$ ($i>j$) and then
have the following equations:

$$
\begin{array}{lcll}
\displaystyle\frac{\partial g}{\partial v_j}&=& 
x_2^{n-j+1}\mu(1,j-1)
& (1\leq j\leq n)\\ 
\displaystyle\frac{\partial g}{\partial u_{jj}}&=& 
x_1\mu(1,j-1)
 \left| \begin{matrix} \lambda_{j+1} && & & v_{j+1}x_2\\
-x_2& \lambda_{j+2} &&& v_{j+2}x_2\\
& \ddots& \ddots&&\vdots \\
& & -x_2 & \lambda_{n-1}& v_{n-1}x_2\\
& && -x_2 & \lambda_{n}+v_nx_2\\
\end{matrix}\right|
& (1\leq j\leq n)\\ 
\displaystyle\frac{\partial g}{\partial u_{ij}}&=& 
\displaystyle -x_1x_2^{n-i}\mu(j+1,i-1)
\left| \begin{matrix} \lambda_{1} & & && v_{1}x_2\\
-x_2& \lambda_{2} &&& v_{2}x_2\\
& \ddots& \ddots&& \vdots \\
& & -x_2 & \lambda_{j-1}& v_{j-1}x_2\\
& && -x_2 & v_jx_2\\
\end{matrix}\right|
& (1\leq j<i\leq n) \\ 
\end{array}
$$

\noindent
By seeing terms divisible by $\lambda_1$ in the left hand side of
\eqref{eqn:1}, we have
\begin{equation*}\label{eqn:3}
\sum_{j=2}^n c(v_j)\frac{\partial g}{\partial v_j}
+\sum_{2\leq j\leq i\leq n} c(u_{ij})h_{ij}=0,
\end{equation*}
where
$$h_{ij}=\displaystyle -x_1x_2^{n-i}\mu(j+1,i-1))
\left| \begin{matrix} \lambda_{1} & & & & 0\\
-x_2& \lambda_{2} &&& v_{2}x_2\\
& \ddots& \ddots&& \vdots \\
& & -x_2 & \lambda_{j-1}& v_{j-1}x_2\\
& && -x_2 & v_jx_2\\
\end{matrix}\right|.$$
Note that
$$
\begin{array}{lcll}
\displaystyle\frac{\partial g}{\partial v_j} &=& 
\displaystyle\lambda_1\frac{\partial g^\prime}{\partial v_j} 
& (2\leq j\leq n), \text{ and} \\
h_{ij} &=& 
\displaystyle\lambda_1\frac{\partial g^\prime}{\partial u_{ij}} 
& (2\leq j\leq i\leq n) \\
\end{array}
$$
where $g^\prime$ is the determinant of 
the $(n-1)\times (n-1)$ matrix obtained from 
$x_1Y_1+x_2Y_2+x_3E_n$ by removing the first row and the first column
minus $x_3^{n-1}$.
Therefore by the induction assumption, 
$$c(v_j)=c(u_{ij})=0 \quad (2\leq j\leq i\leq n)$$ 
since
$\displaystyle\frac{\partial g^\prime}{\partial v_j}$, 
$\displaystyle\frac{\partial g^\prime}{\partial u_{ij}}$ 
($2\leq j\leq i\leq n$) 
are linearly independent.
By \eqref{eqn:1}, we have
\begin{equation}\label{eqn:4}
c(v_1)x_2^n
+c(u_{11})\frac{\partial g}{\partial u_{11}}
-\sum_{i=2}^n c(u_{i1})\displaystyle v_1x_1x_2^{n-i+1}\mu(2,i-1)=0.
\end{equation}
By expanding at the $n$-th column, we have
$$
\frac{\partial g}{\partial u_{11}}=\sum_{i=2}^{n-1} v_{i}x_1x_2^{n-i-1}
\mu(2,i-1)+ x_1(\lambda_n+v_nx_2)\mu(2,n-1).
$$
Therefore, the equation \eqref{eqn:4} implies that
\begin{equation*}\label{eqn:5}
c(v_1)x_2^n+
\sum_{i=2}^{n} (c(u_{11})v_i-c(u_{i1})v_1)x_1x_2^{n-i+1}\mu(2,i-1)
+c(u_{11})x_1\mu(2,n)=0.
\end{equation*}
In this equation we notice the coefficients corresponding
to $x_2^s$, $0\leq s\leq n$.
Then we have $c(u_{i1})=c(v_1)=0$ for $1\leq i\leq n$.

Therefore, we conclude that 
$\displaystyle\frac{\partial g}{\partial v_j}$, $\displaystyle\frac{\partial g}{\partial u_{ij}}$ ($1\leq j\leq i\leq n$) are linearly independent, which
means that the Jacobian of $G|_S$ is nonzero.
\qed
\end{proof}

By Lemma~\ref{lem:Jacobian}, there is an open subset
$S$ of $\RRR^{n\times n\times 2}$
such that the rank of the Jacobian matrix of $G$ at $Y$ 
has full column rank for any $Y\in S$.
Then $f_2(S)\cap I_2$ is not empty and
thus $\mathfrak{T}_2 \cap S$ is not empty. 
In particular, $\mathfrak{T}_2$ is not empty.
\par
Now we show that $\mathfrak{T}_\ell$ is not empty in the case where $\ell>2$.
Let $q\colon \RRR^{n\times n\times \ell} \to \RRR^{n\times n\times 2}$
be a canonical projection which sends $(Y_1;\ldots;Y_\ell)$ to
$(Y_{\ell-1};Y_{\ell})$.
Put $\hat{\mathfrak{T}}=q^{-1}(\mathfrak{T}_2 \cap S)$ and
let $\bar{q}\colon P(m,n)\to P(3,n)$ be also a canonical projection which sends a polynomial
$g(x_1,\ldots,x_m)$ to $g(0,\ldots,0,x_{1},x_{2},x_3)$.
The following diagram is commutative.
$$\begin{CD}
\hat{\mathfrak{T}} @>{\subset}>> \RRR^{n\times n\times \ell} @>f_\ell>> P(m,n) \\
@VVV @V{q}VV @V{\bar{q}}VV \\
\mathfrak{T}_2 \cap S @>{\subset}>> \RRR^{n\times n\times 2} @>f_2>> P(3,n)
\end{CD}
$$
Note that if $g(x_1,\ldots,x_m)\in P(m,n)$ is reducible then so is 
$g(0,\ldots,0,x_1,x_2,x_3)\in P(3,n)$.
The set $\hat{\mathfrak{T}}$ is a nonempty open subset of 
$\RRR^{n\times n\times\ell}$ with the property that 
$f_\ell(Y)$ is irreducible for any $Y\in \hat{\mathfrak{T}}$.
Thus $\mathfrak{T}_\ell$ is not empty, 
since $\hat{\mathfrak{T}}\subset\mathfrak{T}_\ell$.
This completes the proof of Proposition~\ref{prop:irr}.

\section{Proof of Theorem~\ref{thm:main} \label{sec:proof}}

In this section we show Theorem~\ref{thm:main}.

Let $\check{\xxx}=(x_1,\ldots,x_\ell)^\top$ for $\xxx=(x_1,\ldots,x_\ell,x_m)^\top$, and put
$$\psi(\xxx,Y):=\begin{pmatrix}
(-1)^{n+1}|M(\xxx,Y)_{n,1}| \\
(-1)^{n+2}|M(\xxx,Y)_{n,2}| \\
\vdots \\
(-1)^{n+n}|M(\xxx,Y)_{n,n}| 
\end{pmatrix},\quad
\check{\xxx}\otimes \psi(\xxx,Y):=\begin{pmatrix}
x_1\psi(\xxx,Y) \\
x_2\psi(\xxx,Y) \\
\vdots \\
x_\ell\psi(\xxx,Y) 
\end{pmatrix}$$
and
$$U(Y):=\langle \check{\aaa}\otimes \psi(\aaa,Y) \mid \, |M(\aaa,Y)|=0 \rangle.$$ 
\begin{lemma} \label{lem:dimVvsM}
If $\dim U(Y)=p$, then $Y\in \mathfrak{M}$.
\end{lemma}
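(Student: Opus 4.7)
The plan is to use the cofactor identity to recognize $\psi(\xxx,Y)$ as an adjugate column, which supplies kernel vectors of $M(\aaa,Y)$ whenever $|M(\aaa,Y)|=0$. Concretely, $(-1)^{n+j}|M(\xxx,Y)_{n,j}|$ is the cofactor of the $(n,j)$-entry of $M(\xxx,Y)$, so $\psi(\xxx,Y)$ is precisely the $n$-th column of the adjugate of $M(\xxx,Y)$. The polynomial identity $M(\xxx,Y)\cdot\mathrm{adj}(M(\xxx,Y))=|M(\xxx,Y)|E_n$ then gives, reading off the $n$-th column,
$$M(\xxx,Y)\,\psi(\xxx,Y)=|M(\xxx,Y)|\,\eee_n.$$
Specializing at any $\aaa\in\RRR^m$ with $|M(\aaa,Y)|=0$ yields $M(\aaa,Y)\psi(\aaa,Y)=\zerovec$, so $\psi(\aaa,Y)$ is a null vector of $M(\aaa,Y)$.

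Assuming $\dim U(Y)=p$, I would choose $\bbb_1,\ldots,\bbb_p\in\RRR^m$ with $|M(\bbb_j,Y)|=0$ for every $j$ and such that the vectors $\check{\bbb}_j\otimes\psi(\bbb_j,Y)\in\RRR^p$ are linearly independent (possible since they span $U(Y)$ by definition). Then I would define the $m\times p$ matrix $(x_{ij})$ by $x_{ij}=b_{ij}$ (its $j$-th column is $\bbb_j$) and the $n\times p$ matrix $A=(\aaa_1,\ldots,\aaa_p)$ by $\aaa_j:=\psi(\bbb_j,Y)$. The relation \eqref{eq:lineq} then reduces exactly to
$$\Bigl(\sum_{k=1}^\ell b_{kj}Y_k-b_{mj}E_n\Bigr)\psi(\bbb_j,Y)=M(\bbb_j,Y)\,\psi(\bbb_j,Y)=|M(\bbb_j,Y)|\,\eee_n=\zerovec,$$
which holds by construction.

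For the nonsingularity of $B$, the $j$-th column of $AD_k$ is $x_{kj}\aaa_j=b_{kj}\,\psi(\bbb_j,Y)$, so stacking $AD_1,\ldots,AD_\ell$ produces a $p\times p$ matrix whose $j$-th column is precisely $\check{\bbb}_j\otimes\psi(\bbb_j,Y)$. By our choice of the $\bbb_j$, these $p$ columns are linearly independent, so $B$ is nonsingular. Hence $(x_{ij})$ and $A$ witness $Y\in\mathfrak{M}$. There is no substantive obstacle here: the only conceptual step is identifying $\psi$ as the adjugate column, after which the two conditions defining $\mathfrak{M}$ match, line for line, the two ingredients supplied by the hypothesis $\dim U(Y)=p$.
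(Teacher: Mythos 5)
Your proof is correct and follows essentially the same route as the paper's: pick $p$ vectors $\aaa_j$ with $|M(\aaa_j,Y)|=0$ whose associated vectors $\check{\aaa}_j\otimes\psi(\aaa_j,Y)$ are linearly independent, observe $M(\aaa_j,Y)\psi(\aaa_j,Y)=\zerovec$, and note that the stacked matrix $B$ built from $A=(\psi(\aaa_1,Y),\ldots,\psi(\aaa_p,Y))$ and $D_k=\diag(a_{k1},\ldots,a_{kp})$ is exactly the matrix of those independent columns. The only difference is that you spell out the adjugate identity behind $M(\aaa,Y)\psi(\aaa,Y)=\zerovec$, which the paper leaves as a remark.
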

\begin{proof}
Let $\dim U(Y)=p$. Then there are 
$\aaa_j=(a_{1j},\ldots,a_{mj})^\top\in U(Y)$ for $1\leq j\leq p$
such that
$$B^\prime
=(\check{\aaa}_1\otimes \psi(\aaa_1,Y),\ldots,\check{\aaa}_p\otimes \psi(\aaa_p,Y))$$
is nonsingular.
Note that
$M(\aaa_j,Y)\psi(\aaa_j,Y)=\zerovec$ for $1\leq j\leq p$
and
$$B^\prime=\begin{pmatrix} AD_1\\ \vdots \\ AD_{\ell}\end{pmatrix},$$
where
$A=(\psi(\aaa_1,Y),\ldots,\psi(\aaa_p,Y))$ and
$D_k=\diag(a_{k1},\cdots,a_{kp})$ for $1\leq k\leq \ell$.
Thus $Y\in \mathfrak{M}$.
\qed
\end{proof}

For an $n\times \ell$ matrix 
$C=(\ccc_1,\ldots\ccc_\ell)$, we put
$$g(\xxx,Y,C):=\left| \begin{matrix} M(\xxx,Y)^{<n}\\ \sum_{k=1}^\ell x_k\ccc_k^\top\end{matrix}\right|,$$ 
where $M(\xxx,Y)^{<n}$ is the $(n-1)\times n$ matrix obtained from
$M(\xxx,Y)$ by removing the $n$-th row.

\begin{lemma} \label{lem:dimV}
Let $C=(\ccc_1,\ldots,\ccc_\ell)$ be an $n\times\ell$ matrix.
The following claims are equivalent.
\begin{enumerate}
\item $\dim U(Y)=p$.
\item $g(\aaa,Y,C)=0$ 
for any $\aaa \in \RRR^{m}$ with $|M(\aaa,Y)|=0$ implies $C=O$.
\end{enumerate}
\end{lemma}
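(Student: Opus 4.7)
The plan is to translate condition (2) into a statement about the orthogonal complement of $U(Y)$, using a cofactor expansion of $g(\xxx, Y, C)$ along its last row.

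First I would identify an $n\times\ell$ matrix $C=(\ccc_1,\ldots,\ccc_\ell)$ with the vector $\mathbf{c}\in\RRR^p$ whose $((k-1)n+i)$-th entry is the $i$-th entry of $\ccc_k$. Expanding the determinant defining $g(\xxx,Y,C)$ along its last row, whose $j$-th entry is $\sum_{k=1}^\ell x_k c_{jk}$, the $(n,j)$-cofactor is exactly $(-1)^{n+j}|M(\xxx,Y)_{n,j}|$, i.e.\ the $j$-th entry of $\psi(\xxx,Y)$. This gives the key identity
$$g(\xxx,Y,C)=\sum_{k=1}^\ell x_k\,\ccc_k^\top\psi(\xxx,Y)=\mathbf{c}^\top\bigl(\check{\xxx}\otimes\psi(\xxx,Y)\bigr).$$

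Next I would observe that, by definition, $U(Y)$ is the linear span in $\RRR^p$ of the vectors $\check{\aaa}\otimes\psi(\aaa,Y)$ as $\aaa$ ranges over the zero locus of $|M(\aaa,Y)|$. Hence the hypothesis in (2), namely that $g(\aaa,Y,C)=0$ for every such $\aaa$, is equivalent via the identity above to $\mathbf{c}^\top v=0$ for every generator $v$ of $U(Y)$, i.e.\ to $\mathbf{c}\in U(Y)^\perp$. The conclusion $C=O$ in (2) is equivalent to $\mathbf{c}=\zerovec$. So (2) is just the statement $U(Y)^\perp=\{\zerovec\}$.

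Finally, since $U(Y)\subseteq\RRR^p$ is a linear subspace, $U(Y)^\perp=\{\zerovec\}$ if and only if $U(Y)=\RRR^p$, i.e.\ $\dim U(Y)=p$, which is (1). The two implications (1)$\Leftrightarrow$(2) then follow at once. The only nontrivial step is the cofactor calculation; the rest is the standard duality between a subspace and its annihilator, so there is essentially no obstacle beyond verifying the signs in the Laplace expansion match the definition of $\psi$.
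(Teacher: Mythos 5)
Your proof is correct and follows essentially the same route as the paper's: both identify $C$ with a vector in $\RRR^p$, observe that $g(\aaa,Y,C)$ is the inner product of that vector with $\check{\aaa}\otimes\psi(\aaa,Y)$, and conclude via the duality $U(Y)^\perp=\{\zerovec\}\iff\dim U(Y)=p$. The only difference is that you spell out the Laplace expansion justifying the inner-product identity, which the paper states without detail.
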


\begin{proof}
Let $C=(\ccc_1,\ldots,\ccc_\ell)$ be an $n\times \ell$ matrix.
Put $\ddd=(\ccc_1^\top,\ldots,\ccc_\ell^\top)^\top \in \RRR^p$.
The inner product of this vector $\ddd$ with 
$\check{\aaa}\otimes \psi(\aaa,Y)$ is equal to
$g(\aaa,Y,C)$.
Therefore $\ddd$ belongs to the orthogonal complement of $U(Y)$
if and only if $g(\xxx,Y,C)=0$ 
for any $\aaa \in \RRR^{m}$ with $|M(\aaa,Y)|=0$.
Thus the assertion holds.
\qed
\end{proof}

For any $i$ and $k$ with $1\leq i\leq n-1$ and $1\leq k\leq n$,
let $s^{(k)}_i$ be an elementary symmetric polynomial of degree $i$ with
variables $\alpha_1,\ldots,\alpha_{k-1},\alpha_{k+1},\ldots,\alpha_{n}$.
Put 
$$S_n=\begin{pmatrix} 1&1&\ldots&1\\ 
s^{(1)}_1&s^{(2)}_1&\ldots&s^{(n)}_1\\
s^{(1)}_2&s^{(2)}_2&\ldots&s^{(n)}_2\\
\vdots&\vdots&&\vdots\\
s^{(1)}_{n-1}&s^{(2)}_{n-1}&\ldots&s^{(n)}_{n-1}
\end{pmatrix}.$$

\begin{lemma} \label{lem:vanDerMonde}
The determinant $|S_n|$ of the $n\times n$ matrix $S_n$ is equal to
$$\prod_{1\leq i<j\leq n} (\alpha_i-\alpha_j).$$
In particular, if $\alpha_1,\ldots,\alpha_n$ are distinct each other,
then $S_n$ is nonsingular. 
\end{lemma}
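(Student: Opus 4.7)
The plan is to recognize the columns of $S_n$ as coefficient vectors of explicit polynomials, and to exploit this to multiply $S_n$ by a Vandermonde-type matrix whose product is diagonal.

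First I would invoke the standard identity
$$\prod_{j\neq k}(x+\alpha_j)=\sum_{i=0}^{n-1}s^{(k)}_i\,x^{n-1-i},$$
which says precisely that the $k$-th column of $S_n$ records the coefficients of $\prod_{j\neq k}(x+\alpha_j)$, ordered from $x^{n-1}$ down to $x^0$. Substituting $x=-\alpha_\ell$ yields $\prod_{j\neq k}(\alpha_j-\alpha_\ell)$, which vanishes for $\ell\neq k$ (because $\alpha_\ell$ occurs among the factors) and equals $\prod_{j\neq k}(\alpha_j-\alpha_k)$ for $\ell=k$.

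Next I would package these evaluations as a matrix product. Define the $n\times n$ matrix $T$ by $T_{\ell i}:=(-\alpha_\ell)^{n-i}$; then the computation above is exactly
$$(T\,S_n)_{\ell k}=\sum_{i=1}^{n}(-\alpha_\ell)^{n-i}\,s^{(k)}_{i-1}=\prod_{j\neq k}(\alpha_j-\alpha_\ell),$$
so $T\,S_n=\mathrm{Diag}\bigl(\prod_{j\neq 1}(\alpha_j-\alpha_1),\ldots,\prod_{j\neq n}(\alpha_j-\alpha_n)\bigr)$. Taking determinants,
$$\det(T)\cdot\det(S_n)=\prod_{k=1}^{n}\prod_{j\neq k}(\alpha_j-\alpha_k).$$
The matrix $T$ is a reversed Vandermonde in $(-\alpha_1,\ldots,-\alpha_n)$, so its determinant is a signed Vandermonde product that I would evaluate directly.

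Finally I would finish by pairing factors. On the right, grouping each unordered pair $\{i,j\}$ with $i<j$ contributes $(\alpha_i-\alpha_j)(\alpha_j-\alpha_i)=-(\alpha_i-\alpha_j)^2$, so the right-hand side equals $(-1)^{\binom{n}{2}}\prod_{i<j}(\alpha_i-\alpha_j)^2$. Matching this against $\det T=(-1)^{\binom{n}{2}}\prod_{i<j}(\alpha_i-\alpha_j)$ yields $\det S_n=\prod_{i<j}(\alpha_i-\alpha_j)$, and the nonsingularity statement is immediate when the $\alpha_i$ are distinct. The only real obstacle is sign bookkeeping — two factors of $(-1)^{\binom{n}{2}}$ (one from the column reversal converting $T$ to a standard Vandermonde, one from pairing up $(\alpha_i-\alpha_j)$ with $(\alpha_j-\alpha_i)$) must cancel, and I would keep careful track of them in the final step.
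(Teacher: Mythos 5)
Your proof is correct, but it follows a genuinely different route from the paper's. The paper argues by induction on $n$: using the identity $s^{(k)}_{i}-s^{(1)}_{i}=(\alpha_{1}-\alpha_{k})t^{(k-1)}_{i-1}$ it subtracts the first column from the others, extracts the factors $(\alpha_1-\alpha_k)$, and recognizes the remaining $(n-1)\times(n-1)$ block as the matrix $S_{n-1}$ built from $\alpha_2,\ldots,\alpha_n$. You instead observe that the $k$-th column of $S_n$ lists the coefficients of $\prod_{j\neq k}(x+\alpha_j)$, so that left-multiplying by the reversed Vandermonde matrix $T$ with $T_{\ell i}=(-\alpha_\ell)^{n-i}$ evaluates each column polynomial at $x=-\alpha_\ell$ and yields a diagonal product $TS_n$; comparing determinants and tracking the two cancelling factors of $(-1)^{\binom{n}{2}}$ gives the result. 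Your sign bookkeeping checks out, and your viewpoint has the merit of making explicit the generating-function interpretation of $S_n$ that the paper only uses implicitly in the subsequent lemma (the expansion of $\prod_{i}(\alpha_i+z)/(\alpha_k+z)$). The one point worth stating explicitly: dividing by $\det T$ is legitimate only when the $\alpha_i$ are distinct, so your argument first establishes the identity on the dense open set where $\det T\neq 0$; since both sides are polynomials in $\alpha_1,\ldots,\alpha_n$, the identity then holds everywhere. The paper's induction avoids this (minor) extra step and needs no genericity argument, at the cost of being less transparent about where the matrix $S_n$ comes from.
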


\begin{proof}
For any $i$ and $k$ with $1\leq i\leq n-1$ and $2\leq k\leq n-1$,
let $t^{(k-1)}_i$ be an elementary symmetric polynomial of degree $i$ with
variables $\alpha_2,\ldots,\alpha_{k-1},\alpha_{k+1},\ldots,\alpha_{n}$.
For $1\leq i\leq n-1$ and $1\leq k\leq n$, we have
$s^{(k)}_{i}-s^{(1)}_{i}=(\alpha_{1}-\alpha_{k})t^{(k-1)}_{i-1}$.
Then 
$$|S_n|=\prod_{2\leq k\leq n}(\alpha_{1}-\alpha_{k})
\left|\begin{matrix} 1&1&\ldots&1\\ 
t^{(1)}_1&t^{(2)}_1&\ldots&t^{(n-1)}_1\\
\vdots&\vdots&&\vdots\\
t^{(1)}_{n-2}&t^{(2)}_{n-2}&\ldots&t^{(n-1)}_{n-2}
\end{matrix}\right|.$$
Therefore we have the assertion by induction on $n$.
\qed
\end{proof}

The following lemma is obtained straightforwardly.
\begin{lemma} 
$$\left|\begin{matrix} \alpha_1+z&&&&a_1\\ & \alpha_2+z &&& a_2\\
&&\ddots && \vdots \\
&&&\alpha_{n}+z&a_{n}\\
b_1&b_2&\ldots&b_{n}&0\end{matrix}\right|=-(z^{n-1},z^{n-2},\ldots,1)S_n\begin{pmatrix} a_1b_1\\ a_2b_2\\ \vdots\\ a_nb_n\end{pmatrix}.$$
\end{lemma}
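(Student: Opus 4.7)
The strategy is to expand the determinant twice and then recognize the resulting polynomial expression as the claimed matrix product.

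First I would expand the determinant on the left-hand side along its last column. Only the entries $a_1,\dots,a_n$ in the last column are nonzero, so
$$\text{LHS}=\sum_{i=1}^{n} (-1)^{i+(n+1)} a_i\, M_i,$$
where $M_i$ is the $n\times n$ minor obtained by deleting row $i$ and the last column. In $M_i$ the $i$-th column has only one nonzero entry, namely $b_i$ at the bottom; the remaining $n-1$ diagonal rows contribute a diagonal block whose determinant is $\prod_{j\ne i}(\alpha_j+z)$. Expanding $M_i$ along column $i$ yields $M_i=(-1)^{n+i}b_i\prod_{j\ne i}(\alpha_j+z)$. Combining the two signs, the cross terms cancel and I obtain
$$\text{LHS}=-\sum_{i=1}^{n} a_ib_i\prod_{j\ne i}(\alpha_j+z).$$

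Next I would expand each product in powers of $z$. By definition of the elementary symmetric polynomials $s^{(i)}_k$ in $\alpha_1,\dots,\alpha_{i-1},\alpha_{i+1},\dots,\alpha_n$,
$$\prod_{j\ne i}(\alpha_j+z)=z^{n-1}+s^{(i)}_1z^{n-2}+\cdots+s^{(i)}_{n-1}.$$
Reading off the matrix $S_n$ defined just before the lemma statement, the $i$-th column of $S_n$ is exactly $(1,s^{(i)}_1,s^{(i)}_2,\dots,s^{(i)}_{n-1})^\top$, so the $i$-th entry of the row vector $(z^{n-1},z^{n-2},\dots,1)S_n$ equals $\prod_{j\ne i}(\alpha_j+z)$.

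Finally I would assemble the pieces: multiplying this row vector on the right by the column $(a_1b_1,\dots,a_nb_n)^\top$ produces $\sum_{i=1}^n a_ib_i\prod_{j\ne i}(\alpha_j+z)$, which is precisely the negative of the LHS. This gives the stated identity. There is no substantive obstacle here; the only things to be careful about are the two signs $(-1)^{i+n+1}$ and $(-1)^{n+i}$ in the two expansions, and the indexing convention that puts $z^{n-1-k}$ against $s^{(i)}_k$ so that the row vector $(z^{n-1},\dots,1)$ matches the rows of $S_n$.
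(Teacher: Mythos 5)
Your proposal is correct and follows essentially the same route as the paper: reduce the left-hand side to $-\sum_i a_ib_i\prod_{j\ne i}(\alpha_j+z)$ and then expand each product in powers of $z$ via the elementary symmetric polynomials so that the row vector $(z^{n-1},\ldots,1)S_n$ appears. The only difference is that you spell out the two cofactor expansions and the sign bookkeeping that the paper leaves implicit.
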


\begin{proof}
We see the left hand of the equation is equal to
\begin{equation*}
\begin{split}
&-\sum_{k=1}^n a_kb_k\frac{\prod_{1\leq i\leq n}(\alpha_i+z)}{\alpha_k+z} \\
&=-\sum_{k=1}^n a_kb_k\left(\sum_{i=1}^{n} s_{i-1}^{(k)}\right) z^{n-i} \\
&=-\sum_{i=1}^{n} \left(\sum_{k=1}^n a_kb_ks_{i-1}^{(k)}\right) z^{n-i} \\
&=-(z^{n-1},z^{n-2},\ldots,1)\begin{pmatrix} \sum_{k=1}^n a_kb_k\\
\sum_{k=1}^n a_kb_ks_{1}^{(k)}\\
\vdots\\ \sum_{k=1}^n a_kb_ks_{n-1}^{(k)}\end{pmatrix}. \\
\end{split}
\end{equation*}
\qed
\end{proof}

\begin{cor} \label{cor:3slice}
Let $\alpha_1,\ldots,\alpha_{n-1}$ be distinct complex numbers,
$a_1,\ldots,a_{n-1}$ nonzero complex numbers, and
$b_1,\ldots,b_{n-1}$ complex numbers.
If 
$$\left|\begin{matrix} \diag(\alpha_1,\ldots,\alpha_{n-1})+zE_{n-1} & \aaa\\
\bbb^\top & 0\end{matrix}\right|=0$$ 
for any $z \in \RRR$,
then
$\bbb=\zerovec$, where
$\aaa=(a_1\ldots,a_{n-1})^\top$ and $\bbb=(b_1,\ldots,b_{n-1})^\top$.
\end{cor}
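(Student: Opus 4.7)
The plan is to apply the preceding lemma directly, then extract information coefficient-by-coefficient from the resulting polynomial identity in $z$.

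First I would substitute into the formula of the preceding lemma, taking the $n$ of that lemma to be $n-1$ here. With $\alpha_1,\ldots,\alpha_{n-1}$ in place of the $\alpha_i$'s and the entries of $\aaa$ and $\bbb$ in place of the $a_i$'s and $b_i$'s, the lemma gives
$$\left|\begin{matrix} \diag(\alpha_1,\ldots,\alpha_{n-1})+zE_{n-1} & \aaa\\ \bbb^\top & 0\end{matrix}\right|
=-(z^{n-2},z^{n-3},\ldots,1)\,S_{n-1}\begin{pmatrix} a_1b_1\\ a_2b_2\\ \vdots\\ a_{n-1}b_{n-1}\end{pmatrix}.$$
The hypothesis says the left-hand side vanishes for every $z\in\RRR$. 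The right-hand side is a polynomial in $z$ of degree at most $n-2$, so it must be identically zero, which forces
$$S_{n-1}\begin{pmatrix} a_1b_1\\ a_2b_2\\ \vdots\\ a_{n-1}b_{n-1}\end{pmatrix}=\zerovec.$$

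Next I would invoke Lemma~\ref{lem:vanDerMonde}: because $\alpha_1,\ldots,\alpha_{n-1}$ are pairwise distinct, the matrix $S_{n-1}$ is nonsingular. Consequently $a_ib_i=0$ for each $i$, and since each $a_i$ is nonzero by hypothesis, $b_i=0$ for all $i$, i.e.\ $\bbb=\zerovec$.

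There is no real obstacle here; the result is a direct corollary. The only small point to be careful about is the bookkeeping of indices when specializing the preceding lemma (its ``$n$'' becomes our ``$n-1$''), and noting that vanishing of the determinant for all real $z$ forces vanishing as a polynomial, which in turn justifies applying the nonsingularity of $S_{n-1}$ coefficientwise.
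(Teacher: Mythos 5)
Your proof is correct and follows exactly the paper's argument: apply the preceding lemma, deduce that the nonsingular matrix $S_{n-1}$ annihilates $(a_1b_1,\ldots,a_{n-1}b_{n-1})^\top$, and conclude $b_i=0$ from $a_i\ne 0$. Your index bookkeeping (using $S_{n-1}$ rather than the paper's $S_n$) is in fact more careful than the paper's own one-line proof.
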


\begin{proof}
Since $S_n\begin{pmatrix} a_1b_1\\ a_2b_2\\ \vdots\\ a_nb_n\end{pmatrix}=\zerovec$ and $S_n$ is nonsingular, 
we have $(a_1b_1,\ldots,a_nb_n)=\zerovec^\top$.
\qed
\end{proof}

The set
$$\mathfrak{U}_1=\{Y\in\RRR^{n\times n\times \ell} \mid
\, |M(\xxx,Y)| \text{ is irreducible} \}$$ 
is a nonempty Zariski open subset of 
$\mathbb{R}^{n\times n\times \ell}$ (see Proposition~\ref{prop:irr}).
Let $W$ be the subset of $\RRR^{n\times n}$ consisting of
matrices $\begin{pmatrix} A_1&A_2\\ A_3&A_4\end{pmatrix}$ such that
all eigenvalues of $A_1$ are distinct over the complex number field and
every element of the vector $P^{-1}A_2$ is nonzero complex number where
$A_1\in \RRR^{(n-1)\times (n-1)}$, $P\in \CCC^{(n-1)\times (n-1)}$
with $P^{-1}A_1P$ is a diagonal matrix. 
Note that the validity of the condition that every element of the vector $P^{-1}A_2$ is nonzero is independent of the choice of $P$.
We put
$$\mathfrak{U}_2:=\{(Y_1;\ldots;Y_\ell)\in \RRR^{n\times n\times \ell}\mid 
Y_k \in W, 1\leq k\leq\ell\}.$$ 
The set $\mathfrak{U}_2$ is a nonempty Zariski open subset of 
$\mathbb{R}^{n\times n\times \ell}$ 
and 
$\mathfrak{U}:=\mathfrak{U}_1\cap \mathfrak{U}_2$ is also.

\begin{lemma} \label{lem:detzero.cond}
Let $Y\in \mathfrak{U}_2$ and $\ddd_1,\ldots,\ddd_\ell\in\RRR^{n-1}$.
If 
$$\left|\begin{matrix} \multicolumn{2}{c}{M(\aaa,Y)^{<n}} \\ \sum_{k=1}^\ell a_k \ddd_k^\top & 0\end{matrix}\right| = 0$$ 
for any $\aaa=(a_1,\ldots,a_m)^\top \in \RRR^m$,
then $\ddd_1=\cdots=\ddd_\ell=\zerovec$.
\end{lemma}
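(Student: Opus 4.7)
The plan is to decouple the conditions for different indices $k$ by testing the hypothesis on vectors $\aaa$ whose support lies in only two coordinates. For each fixed $k \in \{1,\ldots,\ell\}$, set $\aaa = a_k\eee_k + a_m\eee_m$, with all other coordinates of $\aaa$ equal to zero. Then $M(\aaa,Y) = a_kY_k - a_mE_n$ and $\sum_{j=1}^\ell a_j\ddd_j^\top = a_k\ddd_k^\top$, so the hypothesis specializes to
$$\left|\begin{matrix} \multicolumn{2}{c}{(a_kY_k - a_mE_n)^{<n}} \\ a_k\ddd_k^\top & 0 \end{matrix}\right| = 0 \quad\text{for all } a_k, a_m \in \RRR.$$
It therefore suffices to deduce $\ddd_k = \zerovec$ from this single-slice condition for an arbitrary fixed $k$; the lemma then follows by applying the argument to every $k$.

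Next I would exploit the definition of $W$. Write $Y_k$ in block form $\begin{pmatrix} A_1 & A_2 \\ * & * \end{pmatrix}$ with $A_1 \in \RRR^{(n-1)\times (n-1)}$ and $A_2 \in \RRR^{n-1}$. Because $Y \in \mathfrak{U}_2$, the matrix $A_1$ has $n-1$ distinct complex eigenvalues $\alpha_1,\ldots,\alpha_{n-1}$, and for any nonsingular $P \in \CCC^{(n-1)\times(n-1)}$ with $P^{-1}A_1P = \diag(\alpha_1,\ldots,\alpha_{n-1})$ every entry of $P^{-1}A_2$ is nonzero. Since the specialized determinant is a polynomial in $(a_k,a_m)$ vanishing identically on $\RRR^2$, I may set $a_k = 1$; conjugating by $\diag(P,1)$ (which multiplies the determinant by $|P|\cdot|P^{-1}|=1$) yields
$$\left|\begin{pmatrix} \diag(\alpha_1,\ldots,\alpha_{n-1}) - a_mE_{n-1} & P^{-1}A_2 \\ \ddd_k^\top P & 0 \end{pmatrix}\right| = 0 \quad\text{for all } a_m \in \RRR.$$

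This is precisely the hypothesis of Corollary~\ref{cor:3slice}, with $z = -a_m$, the nonzero entries of $P^{-1}A_2$ playing the role of the $a_i$, and the entries of $(\ddd_k^\top P)^\top$ playing the role of the $b_i$. The corollary forces $\ddd_k^\top P = \zerovec$ as a complex row vector; since $P$ is nonsingular over $\CCC$, this in turn forces $\ddd_k = \zerovec$ in $\RRR^{n-1}$. Repeating the argument for each $k \in \{1,\ldots,\ell\}$ finishes the proof.

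The proof does not present a deep obstacle: the core idea is to recognize that the hypothesis ``for every $\aaa \in \RRR^m$'' can be tested on a two-dimensional slice so as to isolate one $\ddd_k$ at a time, and then to invoke the already-established Corollary~\ref{cor:3slice}. The only subtlety to watch is the real-versus-complex distinction when $A_1$ has non-real eigenvalues: $P$ and $P^{-1}A_2$ then live over $\CCC$, but Corollary~\ref{cor:3slice} is stated over $\CCC$, and nonsingularity of $P$ still lets us recover the real equation $\ddd_k = \zerovec$ from the complex equation $\ddd_k^\top P = \zerovec$.
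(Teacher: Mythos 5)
Your proof is correct and follows essentially the same route as the paper's: specialize to $a_j=0$ for $j\ne k$, $a_k=1$, conjugate by $\diag(P,1)$ using the defining property of $W$, and invoke Corollary~\ref{cor:3slice} to get $\ddd_k^\top P=\zerovec^\top$, hence $\ddd_k=\zerovec$. The extra remarks about polynomial identities and the real-versus-complex issue are fine but not needed beyond what the paper already does.
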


\begin{proof}
Let $1\leq k\leq \ell$.
Take $a_k=1$ and $a_j=0$ for $1\leq j\leq\ell$, $j\ne k$ and
put $Y_k=\begin{pmatrix} A_1&A_2\\ A_3&A_4\end{pmatrix}$, where
$A_1$ is an $(n-1)\times(n-1)$ matrix.
Since $Y_k\in W$, there are a matrix $P\in \CCC^{(n-1)\times (n-1)}$ 
and distinct complex numbers $\alpha_1,\ldots, \alpha_{n-1}$
such that
$$\diag(P,1)^{-1}\begin{pmatrix} \multicolumn{2}{c}{(Y_k-a_mE_n)^{<n}} \\ 
\ddd_k^\top & 0\end{pmatrix}\diag(P,1)=
\begin{pmatrix} \diag(\alpha_1,\ldots,\alpha_{n-1})-a_mE_{n-1} & P^{-1}A_2\\
\ddd_k^\top P & 0 \end{pmatrix}$$
and every element of $P^{-1}A_2$ is nonzero.
Then we have $\ddd_k^\top P=\zerovec^\top$ by Corollary~\ref{cor:3slice} and
thus $\ddd_k=\zerovec$.
\qed
\end{proof}

The following lemma is essential for the proof of Theorem~\ref{thm:main}.

\begin{lemma} \label{lem:UvsC2}
$\mathfrak{U}\cap \mathfrak{C}\subset \mathfrak{M}$.
In particular, $\overline{\mathfrak{C}}\subset\overline{\mathfrak{M}}$ holds.
\end{lemma}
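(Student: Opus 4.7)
The plan is to fix $Y \in \mathfrak{U} \cap \mathfrak{C}$ and use Lemmas \ref{lem:dimVvsM} and \ref{lem:dimV}, reducing the task to proving: if an $n \times \ell$ matrix $C = (\ccc_1, \ldots, \ccc_\ell)$ satisfies $g(\aaa, Y, C) = 0$ for every $\aaa \in \RRR^m$ with $|M(\aaa, Y)| = 0$, then $C = O$. The ``in particular'' assertion then follows at once, because $\mathfrak{U}$ is Zariski open and dense in $\RRR^{n \times n \times \ell}$, so $\mathfrak{U} \cap \mathfrak{C}$ is Euclidean-dense in $\mathfrak{C}$, giving $\overline{\mathfrak{C}} = \overline{\mathfrak{U} \cap \mathfrak{C}} \subseteq \overline{\mathfrak{M}}$.

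The first step is to upgrade the vanishing hypothesis on $C$ to the polynomial identity $g(\xxx, Y, C) \equiv 0$. Because $Y \in \mathfrak{C}$, $|M(\xxx, Y)|$ attains a negative value on $\RRR^m$ while growing to $+\infty$ as $x_m \to -\infty$ with the other coordinates fixed; the intermediate value theorem produces real zeros, and the smooth locus of the real zero set is a real submanifold of dimension $m-1$. Because $Y \in \mathfrak{U}_1$, the polynomial $|M(\xxx, Y)|$ is absolutely irreducible, so this totally real submanifold is Zariski-dense in the complex hypersurface $V = \{|M(\xxx, Y)| = 0\} \subseteq \CCC^m$. Consequently $g(\xxx, Y, C)$ vanishes on $V$, and the Nullstellensatz combined with irreducibility yields that $|M(\xxx, Y)|$ divides $g(\xxx, Y, C)$ in $\CCC[\xxx]$, hence in $\RRR[\xxx]$. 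Since both polynomials have degree $n$ in $\xxx$, one has $g(\xxx, Y, C) = c \cdot |M(\xxx, Y)|$ for some $c \in \RRR$. Setting $x_1 = \cdots = x_\ell = 0$ kills the last row of the determinant defining $g$, while $|M(\xxx, Y)|$ specializes to $(-x_m)^n \not\equiv 0$; therefore $c = 0$ and $g(\xxx, Y, C) \equiv 0$.

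The second step is to deduce $C = O$ from $g \equiv 0$, using $Y \in \mathfrak{U}_2$. For each fixed $k \in \{1, \ldots, \ell\}$, specialize $x_j = \delta_{jk}$ for $1 \leq j \leq \ell$, keeping $x_m$ free. Then $g$ reduces to the determinant of the matrix whose top $n-1$ rows are $(Y_k - x_m E_n)^{<n}$ and whose last row is $\ccc_k^\top$, and this determinant must vanish identically in $x_m$. Write $\ccc_k = (\ddd_k^\top, \gamma_k)^\top$ with $\ddd_k \in \RRR^{n-1}$, and block $Y_k$ so that $A_1 \in \RRR^{(n-1) \times (n-1)}$ is the top-left block and $A_2 \in \RRR^{n-1}$ is the top-right column. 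Expanding along the last row, the only cofactor of degree $n-1$ in $x_m$ is $|A_1 - x_m E_{n-1}|$, which multiplies $\gamma_k$; the other cofactors have degree at most $n-2$. The coefficient of $x_m^{n-1}$ therefore forces $\gamma_k = 0$. The residual identity matches the hypothesis of Corollary \ref{cor:3slice}: after diagonalizing $A_1 = PDP^{-1}$, the condition $Y_k \in W$ (from $Y \in \mathfrak{U}_2$) guarantees that $A_1$ has distinct eigenvalues and that every entry of $P^{-1}A_2$ is nonzero, so the corollary yields $\ddd_k^\top P = 0$, hence $\ddd_k = 0$. Thus $\ccc_k = 0$ for every $k$, i.e., $C = O$.

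The main obstacle is the first step, where a real vanishing condition on $\{|M(\aaa, Y)| = 0\} \cap \RRR^m$ must be converted to a polynomial identity over $\CCC[\xxx]$. Both hypotheses on $Y$ are essential here: the sign change provided by $\mathfrak{C}$ is what produces smooth real zeros of codimension one, and the absolute irreducibility from $\mathfrak{U}_1$ is what lifts their Zariski closure to the full complex hypersurface. Once $g \equiv 0$ is in hand, the second step is a clean specialization argument that plugs the eigenvalue structure encoded in $\mathfrak{U}_2$ directly into Corollary \ref{cor:3slice}.
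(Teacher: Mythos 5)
Your proof is correct, and its overall skeleton coincides with the paper's: reduce via Lemmas~\ref{lem:dimVvsM} and~\ref{lem:dimV} to showing that vanishing of $g(\aaa,Y,C)$ on the real zero locus of $|M(\aaa,Y)|$ forces $C=O$, then extract $c_{nk}=0$ from the $x_m^{n-1}$ coefficient and finish with Corollary~\ref{cor:3slice} (your second step is exactly the content of Lemma~\ref{lem:detzero.cond}, merely carried out after the specialization $x_j=\delta_{jk}$ rather than before). The one genuinely different ingredient is how you promote the real vanishing condition to the identity $g(\xxx,Y,C)\equiv 0$. The paper stays over $\RRR$: since $|M(\xxx,Y)|$ is irreducible and has larger $x_m$-degree than $g$, the two are coprime, so a B\'ezout/resultant relation $f_1M+f_2g=h(\check{\xxx})$ with $h\neq 0$ holds, and evaluating on the local graph $x_m=\mu(\yyy)$ supplied by the sign change yields a contradiction. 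You instead complexify: the sign change gives a codimension-one real zero set, which (together with irreducibility) makes the real points Zariski-dense in the complex hypersurface, so the Nullstellensatz gives $|M(\xxx,Y)|\mid g(\xxx,Y,C)$, and a degree/specialization comparison kills the quotient. Both are sound. Your route needs \emph{absolute} irreducibility where the paper only invokes irreducibility over $\RRR$; this is harmless because Kaltofen's theorem underlying Proposition~\ref{prop:irr} is in fact about absolute irreducibility, and in any case irreducibility over $\RRR$ together with an $(m-1)$-dimensional real zero locus already rules out a factorization into conjugate complex factors. The trade-off is that the paper's argument is more elementary (pure polynomial arithmetic over $\RRR$), while yours makes the geometric mechanism --- why both membership in $\mathfrak{C}$ and in $\mathfrak{U}_1$ are needed --- more transparent.
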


\begin{proof}
Let $Y\in \mathfrak{U}\cap \mathfrak{C}$ and fix it.
There exists $\aaa=(a_1,\ldots,a_\ell,a_m)^\top$
such that $|M(\aaa,Y)|<0$. 
Then there is an open neighborhood $U$ of
$(a_1,\ldots,a_\ell)^\top$ and a mapping $\mu\colon U\to \RRR$ such that
$$|M(\begin{pmatrix} \yyy\\ \mu(\yyy)\end{pmatrix},Y)|=0$$
for any $\yyy \in U$.
Thus $|M(\xxx,Y)|=0$ determines an $(m-1)$-dimensional algebraic set.
Let $C$ be an $n\times \ell$ matrix.
Now suppose that $g(\aaa,Y,C)=0$ holds for any $\aaa\in \RRR^m$ with $|M(\aaa,Y)|=0$.
We show that $g(\xxx,Y,C)$ is zero as a polynomial 
over elements of $\xxx$.
As a contrary, assume that $g(\xxx,Y,C)$ is not zero.
The degree of $g(\xxx,Y,C)$ corresponding to the $m$-th
element of $\xxx$
is less than $m$ which is that of $|M(\xxx,Y)|$.
Furthermore, since $M(\xxx,Y)$ is irreducible, $M(\xxx,Y)$ and $g(\xxx,Y,C)$ are coprime.
Then there are
polynomials $f_1(\xxx)$, $f_2(\xxx)\in\RRR[x_1,\ldots, x_\ell, x_m]$ and a nonzero polynomial $h(\check{\xxx})\in\RRR[x_1,\ldots, x_\ell]$ 
such that
$$f_1(\xxx)M(\xxx,Y)
  +f_2(\xxx)g(\xxx,Y,C)=h(\check{\xxx})$$
as a polynomial over elements of $\xxx$, by Euclidean algorithm.
However, we can take $\bbb\in U$ so that $h(\bbb)\ne 0$.
Then the above equation does not hold at $\xxx=\begin{pmatrix} \bbb\\ \mu(\bbb)\end{pmatrix}$.
Hence $g(\xxx,Y,C)$ must be the zero polynomial over elements of $\xxx$.
Let $\ccc_k^\top=(c_{1k},\ldots,c_{nk})$.
By seeing the coefficient of $x_m^{n-1}x_k$, we get $c_{nk}=0$ for $1\leq k\leq\ell$.
Therefore $C=O$ by Lemma~\ref{lem:detzero.cond}.
By Lemmas~\ref{lem:dimV} and \ref{lem:dimVvsM} we get $Y\in\mathfrak{M}$.
Therefore $\mathfrak{U}\cap \mathfrak{C}$ is a subset of $\mathfrak{M}$.
Then $\overline{\mathfrak{C}}=\overline{\mathfrak{U}\cap \mathfrak{C}}
\subset \overline{\mathfrak{M}}$. 
\qed
\end{proof}

\begin{thm} \label{thm:SvsUvsC2}
$\overline{\mathfrak{S}}=\overline{\mathfrak{M}}=\overline{\mathfrak{C}}$ holds.
\end{thm}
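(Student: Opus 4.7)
The plan is to prove the three closures coincide by establishing a cycle of inclusions
\[
\overline{\mathfrak{M}}\subset\overline{\mathfrak{S}}\subset\overline{\mathfrak{C}}\subset\overline{\mathfrak{M}},
\]
using the structural results already assembled in the paper.

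First, the inclusion $\overline{\mathfrak{M}}\subset\overline{\mathfrak{S}}$ is immediate from Proposition~\ref{prop:UsubsetS}, which gives $\mathfrak{M}\subset\mathfrak{S}$, so taking closures preserves the inclusion. At the other end, the inclusion $\overline{\mathfrak{C}}\subset\overline{\mathfrak{M}}$ is exactly the last sentence of Lemma~\ref{lem:UvsC2}, which does all the heavy lifting (it uses irreducibility of $|M(\xxx,Y)|$ and the genericity set $\mathfrak{U}_2$).

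The remaining step is $\overline{\mathfrak{S}}\subset\overline{\mathfrak{C}}$, and this is essentially a bookkeeping step once the earlier partition results are invoked. By Proposition~\ref{prop:C3isboundaryC2} we have the disjoint decomposition
\[
\RRR^{n\times n\times\ell} \;=\; \mathfrak{A}\,\sqcup\,\overline{\mathfrak{C}}.
\]
On the other hand, Proposition~\ref{prop:C1capU} says $\mathfrak{A}\cap\mathfrak{S}=\varnothing$ (indeed, $Y\in\mathfrak{A}$ forces $\dim\hat{V}(Y)=0$ by Proposition~\ref{prop:dimV(Y)=0}, whereas $Y\in\mathfrak{S}$ requires $\dim\hat{V}(Y)=n\geq 3$). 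Combining these two facts gives $\mathfrak{S}\subset\overline{\mathfrak{C}}$, and hence $\overline{\mathfrak{S}}\subset\overline{\mathfrak{C}}$.

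Chaining these three inclusions yields $\overline{\mathfrak{S}}=\overline{\mathfrak{M}}=\overline{\mathfrak{C}}$. There is no real obstacle here: all the work has been done in Lemma~\ref{lem:UvsC2} (the delicate coprimality argument using the irreducibility of $|M(\xxx,Y)|$) and in Proposition~\ref{prop:C3isboundaryC2} (perturbing a singular pencil into one with negative determinant). The theorem itself is merely the synthesis of those results with the trivial inclusion $\mathfrak{M}\subset\mathfrak{S}$.
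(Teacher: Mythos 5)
Your proof is correct and follows essentially the same route as the paper: $\overline{\mathfrak{M}}\subset\overline{\mathfrak{S}}$ from Proposition~\ref{prop:UsubsetS}, $\mathfrak{S}\subset\overline{\mathfrak{C}}$ from Propositions~\ref{prop:C1capU} and \ref{prop:C3isboundaryC2}, and $\overline{\mathfrak{C}}\subset\overline{\mathfrak{M}}$ from Lemma~\ref{lem:UvsC2}. Nothing to add.
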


\begin{proof}
We have $\overline{\mathfrak{M}}\subset \overline{\mathfrak{S}}$
by Proposition~\ref{prop:UsubsetS}.
By Propositions~\ref{prop:C1capU} and \ref{prop:C3isboundaryC2},
the set $\mathfrak{S}$ is a subset of $\overline{\mathfrak{C}}$
and then $\overline{\mathfrak{S}}\subset\overline{\mathfrak{C}}$.
Therefore $\overline{\mathfrak{S}}=\overline{\mathfrak{M}}=\overline{\mathfrak{C}}$ by Lemma~\ref{lem:UvsC2}.
\qed
\end{proof}



\medskip
\noindent{\bf Proof of Theorem~\ref{thm:main}.}
For almost all $Y\in\mathfrak{A}$, $\rank\, X(Y)=p+1$ by Theorem~\ref{thm:ans}.
Since $\mathfrak{A}$ is an open set, if $\mathfrak{A}$ is not an empty set,
then $\typicalrankR(m,n,p)=\{p,p+1\}$ (\cite[Theorem~3.4]{Sumi-etal:2010a}).
Suppose that $\mathfrak{A}$ is empty.
Then $\overline{\mathfrak{M}}=\mathbb{R}^{n\times n\times \ell}$
and the closure of the set consisting of all $n\times p\times m$
tensors equivalent to $X(Y)$ for some $Y\in \mathfrak{M}$
is $\mathbb{R}^{n\times p\times m}$.
Recall that any tensor $X(Y)$ for $Y\in\mathfrak{M}$ has rank $p$.
By Theorem~\ref{thm:Friedland},  $p$ is the maximal
typical rank of $\mathbb{R}^{n\times p\times m}$.
Therefore, 
$$\typicalrankR(m,n,p)=\typicalrankR(n,p,m)=\{p\}$$ 
holds.
\qed


\end{document}